\theoremstyle{plain}
\newtheorem{thm}{Theorem}[section]
\newtheorem{lemma}[thm]{Lemma}
\newtheorem{cor}[thm]{Corollary}
\newtheorem{prop}[thm]{Proposition}
\theoremstyle{definition}
\theoremstyle{remark}
\numberwithin{equation}{section}
\newcommand{\CC}{\mathbb C}
\newcommand{\PP}{\mathbb P}
\newcommand{\ZZ}{\mathbb Z}
\begin{document}
\title{Varieties isogenous to a higher product with prescribed numerical invariants}
\author{Amir D\v{z}ambi\'c}
 \address{Amir D\u{z}ambi\'{c}: 
Mathematisches Seminar,
Christian-Albrechts-Universit\"{a}t zu Kiel,
24118 Kiel, Germany}
 \email{dzambic@math.uni-kiel.de}

 \author{Anitha Thillaisundaram}
 
 \address{Anitha Thillaisundaram: 
 Centre for Mathematical Sciences, Lund University,  223 62 Lund, Sweden}
 \email{anitha.thillaisundaram@math.lu.se}

 \keywords{Varieties isogenous to a higher product}
 \subjclass[2010]{Primary  14J29;  Secondary 14J35, 14J40, 20D99}
 \thanks{The first author acknowledges support from the Deutsche Forschungsgemeinschaft (DFG), grant DZ 105/02. The second author acknowledges support from the Folke Lann\'{e}r's Fund and she thanks the Christian-Albrecht University of Kiel for its hospitality. This research was also supported by the Royal Physiographic Society of Lund.} 

 \date{\today}
\maketitle
\begin{abstract} 
Using structural properties of groups of small order, we establish the non-existence of varieties isogenous to a higher product of dimension $n$ greater than 3 with  fixed topological Euler number $(-2)^n$ and trivial first Betti number.
\end{abstract}

\section{Introduction}
A smooth complex projective variety $X$ of dimension $n$ is said to be \emph{isogenous to a higher product}, if there exists a finite unramified covering $\pi: C_1\times\cdots\times C_n\to X$ where $C_1,\ldots ,C_n$ are compact Riemann surfaces, all of genus greater
than $1$. Equivalently, $X$ is isogenous to a higher product if  $X=(C_1\times\cdots\times C_n)/G$ where $G$ is a finite group that acts freely on $C_1\times\cdots\times C_n$ as a group of holomorphic automorphisms. Varieties isogenous to a higher product have been introduced by Catanese in \cite{Catanese00} and have proven to be very useful in several geometrical contexts. For instance, following an idea of Beauville, they have been used to give examples of algebraic surfaces~$S$ of general type with $p_g(S)=q(S)=0$ and $K_S^2=8$; here $K_S$, $p_g(S)=\dim H^{0}(S,\mathcal O(K_S))$ and $q(S)=\dim H^1(S,\mathcal O_S)$ are respectively the canonical divisor, the geometric genus (i.e. the dimension of the space of global sections of the sheaf corresponding to $K_S$) and the irregularity (i.e. the dimension of the first cohomology group of the structure sheaf $\mathcal O_S$ of $S$). Recall that an algebraic variety $X$ is said to be of \emph{general type}, if its Kodaira dimension, i.e. the transcendence degree of the canonical ring  minus one, equals its dimension. Note that a product of curves of genus at least two is of general type. Since the Kodaira dimension is invariant under unramified morphisms, every variety isogenous to a higher product is of general type. We remark that the condition  $p_g(S)=q(S)=0$ for a surface is equivalent to $b_1(S)=0$, where $b_1$ denotes the first Betti number. As another application, surfaces isogenous to a higher product have been used to provide counterexamples to the so-called $\mathbf{diff}=\mathbf{def}$  speculation of Friedman and Morgan asking if for minimal surfaces of general type the diffeomorphism classes coincide with their deformation types; see \cite{Catanese03}. Lastly we mention that surfaces isogenous to a higher product can be used to find examples of surfaces of general type with the canonical map of high degree; see \cite{Catanese18} and \cite{Gleissneretal22}.

Let $G^0=G\cap (\text{Aut}(C_1)\times\cdots\times \text{Aut}(C_n))$, where $\text{Aut}(C_i)$ denotes the group of holomorphic automorphisms of $C_i$. We say that $X$ is of \emph{unmixed type}, if $G=G^{0}$, i.e. if $G$ acts diagonally on $C_1\times\cdots\times C_n$ and of \emph{mixed type} otherwise. The action of $G$ on the product may in general not be faithful, but even if the action is faithful, the action on the factors induced by projection may not be faithful.  We say that $G$ acts \emph{absolutely faithfully} on $C_1\times\cdots\times C_n$ if the action on $C_i$ induced from the projection $C_1\times\cdots\times C_n\to C_i$ is faithful for each $i=1,\ldots,n$.

Here we are interested in the question of the existence of varieties $X$ isogenous to a higher product with some fixed numerical invariants. Specifically we ask for the existence of $X$ with topological Euler number $e(X)=(-2)^n$ and vanishing first Betti number.
Let us define: 
\begin{align*}
&\mathcal S_0(e,n)=\\
&\,\,\{ X=(C_1\times\cdots\times C_n)/G\mid e(X)=e, \ b_1(X)=0,\ X \text{ of unmixed type},\ G \text{ absolutely faithful}\}
\end{align*}

Our main result is the following:
\begin{thm}\label{thm:main}
    For all $n\ge 4$, the set $\mathcal{S}_0((-2)^n,n)$ is empty.
 \end{thm}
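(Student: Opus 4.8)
The plan is to translate the geometric hypotheses into purely combinatorial and group-theoretic constraints on $G$, bound $|G|$ into a small explicit range, and then eliminate the finitely many surviving configurations using structural properties of the groups involved.

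First I would record the numerical dictionary for the unmixed case. Writing $g_i$ for the genus of $C_i$ and using that $\pi$ is unramified of degree $|G|$ together with the multiplicativity of the Euler number, the condition $e(X)=(-2)^n$ becomes $\prod_{i=1}^n(g_i-1)=|G|$. Since $X$ is unmixed, $H^1(X,\CC)=\bigoplus_i H^1(C_i,\CC)^G=\bigoplus_i H^1(C_i/G,\CC)$, so $b_1(X)=2\sum_i g(C_i/G)$; hence $b_1(X)=0$ forces $C_i/G\cong\PP^1$ for every $i$. Thus each $C_i\to\PP^1$ is a Galois cover branched over $r_i\ge 3$ points with branch orders $m_{i1},\dots,m_{ir_i}$, and Riemann--Hurwitz gives $g_i-1=\tfrac{|G|}{2}\Theta_i$ with $\Theta_i=-2+\sum_{j}(1-1/m_{ij})>0$. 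Setting $a_i:=g_i-1\ge 1$, hyperbolicity of $C_i$ is equivalent to the Hurwitz bound $|G|\le 84\,a_i$, and the data of each factor is a genus-$0$ system of generators of $G$ of signature $(m_{i1},\dots,m_{ir_i})$. Freeness of the diagonal action becomes the intersection condition $\bigcap_{i=1}^n\Sigma_i=\{1\}$, where $\Sigma_i$ is the union of the identity with all $G$-conjugates of powers of the $x_{ij}$, i.e.\ the set of elements of $G$ fixing a point of $C_i$.

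The second step is a finiteness reduction. Ordering $a_1\le\cdots\le a_n$, the relation $\prod_i a_i=|G|$ together with $|G|\le 84\,a_1$ yields $\prod_{i\ge 2}a_i\le 84$. For $n\ge 4$ this is a product of at least three integers $\ge 1$, so $a_1\le a_2\le 84^{1/3}<5$; feeding $a_1\in\{1,2,3,4\}$ back into the sharp bounds for the maximal automorphism group of a curve of genus $g_1=a_1+1$ (namely $48,168,120,192$) pins $|G|$ into a small explicit range $|G|\le 192$. The same inequality shows at most a bounded number of the $a_i$ exceed $1$, so apart from finitely many ``large'' factors all remaining factors have genus $2$; in particular whenever a genus-$2$ factor occurs one has $|G|\le 48$. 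This leaves a finite, explicit list of candidate orders $|G|$, partitions $(a_1,\dots,a_n)$, and admissible signatures with $\Theta_i=2a_i/|G|$.

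Finally I would eliminate every surviving candidate using structural properties of groups of small order. For each admissible $|G|$ one runs through the groups of that order and asks whether $G$ carries $n$ genus-$0$ generating systems of the prescribed signatures whose fixed-point sets satisfy $\bigcap_i\Sigma_i=\{1\}$. Here the tension making $n\ge 4$ impossible becomes visible: the product constraint $\prod a_i=|G|$ forces at least three factors to realise near-minimal orbifold Euler characteristics (low-genus, heavily branched covers), whose stabiliser sets $\Sigma_i$ are correspondingly large, while the remaining factors are too few to clear the common intersection. Concretely, I expect to use Sylow theory and the conjugacy-class structure to exhibit, in each candidate group, a nontrivial element --- typically of prime order lying in a Sylow subgroup with few conjugacy classes --- that necessarily belongs to $\Sigma_i$ for all $i$, contradicting freeness. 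The main obstacle is precisely this last step: organising the case distinction so that the higher-order ``generic'' groups are dispatched uniformly by the $\Sigma_i$-intersection obstruction, while the low-order groups, especially those admitting many genus-$2$ factors, are handled by the explicit classification of curve actions in low genus. Everything preceding it is bookkeeping; the genuine content is that the freeness condition and the Euler-number constraint become mutually incompatible once $n\ge 4$.
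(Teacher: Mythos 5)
Your reduction is sound as far as it goes, and its skeleton coincides with the paper's: the dictionary $\prod_i(g_i-1)=|G|$ from the Euler number, the K\"unneth argument forcing $C_i/G\cong\PP_1(\CC)$, spherical generating systems with the freeness criterion $\bigcap_i\Sigma(T_i)=\{1\}$, and a Hurwitz-type finiteness bound (the paper uses $|G|^{n-1}=\prod_i\alpha(A_i)\le 84^n$; your route via the sharp maximal-automorphism orders $48,168,120,192$ in genus $2$--$5$ is a legitimate variant). But there are two genuine gaps. First, you never extract the divisibility constraint that makes the finite problem tractable: since each spherical generator must act freely on some \emph{other} factor $C_k$, its order divides $g(C_k)-1$ and hence divides $\alpha(A_i)=\prod_{j\neq i}(g_j-1)$, which must moreover be a positive integer. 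This is the paper's Lemma~3.6, and it is what cuts the admissible ramification types down to the explicit list of roughly thirty types in Corollary~3.7 and drives all the subsequent arithmetic on prime divisors of $|G|$ (primes in $\{2,3,5,7\}$, at most three distinct ones, and so on). Without it, your ``explicit list of candidate orders, partitions and signatures'' is substantially larger than you suggest. Relatedly, your plan leaves $n$ unbounded whenever genus-$2$ factors are present; the paper resolves this by first proving that \emph{no} factor can have genus $2$ (Proposition~4.2, a long case analysis over Broughton's genus-$2$ list), which then yields $n\le 5$.

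Second, and more seriously, the elimination step --- which you correctly identify as the main obstacle and then defer --- is the entire content of the theorem, occupying roughly two-thirds of the paper. Your hoped-for uniform mechanism (``a nontrivial element of prime order lying in a Sylow subgroup with few conjugacy classes that necessarily belongs to $\Sigma_i$ for all $i$'') cannot work as stated: for $n=2$ and $n=3$ the identical local constraints are satisfiable and $\mathcal S_0(4,2)$, $\mathcal S_0(-8,3)$ are nonempty, so the obstruction is invisible to Sylow theory alone and emerges only from group-by-group inspection of which signatures actually admit generating systems and how their stabiliser sets intersect. Concretely, the paper needs Broughton's classification of genus-$2$ and genus-$3$ actions (with Polizzi's correction to the genus-$3$ table at order $48$), delicate bookkeeping of order-$2$ generators in groups like $D_6$ and $D_4$, and then separate bespoke arguments for orders $60$, $168$, $40$, $80$, $36$, $72$, $16$, $32$, $24$ and $48$, including full conjugacy-class computations in $Z_4^2\rtimes Z_3$ and in a $48$-element group given by a five-generator presentation. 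None of this is ``bookkeeping,'' and your proposal supplies none of it: as written it establishes finiteness of the search space but not emptiness.
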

The result above is in contrast to the situation for $n<4$. Indeed, examples of corresponding varieties 
in dimensions $n=2$ and $n=3$ are known from the work of Bauer, Catanese and Grunewald \cite{BauerCataneseGrunewald08} (dimension $2$) and from Frapporti and Glei{\ss}ner \cite{FrapportiGleissner16} (dimension 3). Moreover by \cite{BauerCataneseGrunewald08} and \cite{FrapportiGleissner16} the sets $\mathcal S_0(4,2)$ and $\mathcal S_0(-8,3)$ can be described explicitly.

We prove our main result by first establishing the following: 
\begin{thm}
	\label{mainthm}
Let $X=(C_1\times\cdots\times C_n)/G\in \mathcal S_0((-2)^n,n)$. Then all of the factors $C_i$ have genus $g(C_i)>2$. Furthermore, if $\mathcal S_0((-2)^n,n)\neq \varnothing$, then $n\leq 5$. 
\end{thm}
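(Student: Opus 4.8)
The plan is to translate the geometric hypotheses into purely numerical and group-theoretic constraints, and then push those constraints to force $n \leq 5$ and $g(C_i) > 2$.

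The plan is to convert the geometric data of $X=(C_1\times\cdots\times C_n)/G$ into arithmetic constraints on $|G|$ and the genera $g(C_i)$, and then to squeeze $n$. First, since $\pi$ is unramified and the topological Euler number is multiplicative in finite covers, the K\"unneth formula gives $\prod_{i=1}^n\bigl(2-2g(C_i)\bigr)=|G|\cdot e(X)=|G|(-2)^n$, which rearranges to the fundamental identity $\prod_{i=1}^n\bigl(g(C_i)-1\bigr)=|G|$. Next, for an unmixed quotient one has $q(X)=\sum_{i=1}^n g(C_i/G)$ and $b_1(X)=2q(X)$, so $b_1(X)=0$ forces every quotient $C_i/G$ to be $\PP^1$. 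Each covering $C_i\to C_i/G=\PP^1$ is then a Galois cover of the line branched over $r_i\ge 3$ points with a spherical generating system, and Riemann--Hurwitz reads $g(C_i)-1=\tfrac{|G|}{2}\theta_i$, where $\theta_i=-2+\sum_{j=1}^{r_i}\bigl(1-1/m_{i,j}\bigr)$ is (minus) the orbifold Euler characteristic of the associated genus-$0$ orbifold. Hyperbolicity gives $\theta_i>0$, and the classification of hyperbolic triangle orbifolds gives the uniform lower bound $\theta_i\ge \tfrac{1}{42}$, with equality exactly for the signature $(2,3,7)$.

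For the genus bound I would argue by contradiction: suppose $g(C_k)=2$ for some $k$. Then the factor $g(C_k)-1=1$ and $\theta_k=2/|G|$, while the classical bound for the automorphism group of a genus-$2$ curve gives $|G|\le 48$. Thus $\theta_k=2/|G|$ is pinned to a short list of admissible genus-$0$ signatures and $|G|$ ranges over a finite set of small orders. For each such order I would run through the (classified) faithful $G$-actions on a genus-$2$ curve together with the remaining $n-1$ spherical systems and show that the freeness requirement---that no nontrivial element of $G$ has a fixed point on every factor---cannot be met simultaneously with $b_1=0$. This eliminates $g(C_k)=2$ and yields $g(C_i)>2$ for all $i$.

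Granting $g(C_i)>2$, each factor satisfies $g(C_i)-1\ge 2$, so the fundamental identity gives the lower bound $|G|=\prod_i\bigl(g(C_i)-1\bigr)\ge 2^n$; in particular $|G|$ has at least $n$ prime factors counted with multiplicity. On the other hand, multiplying $g(C_i)-1=\tfrac{|G|}{2}\theta_i$ over $i$ and using $\prod_i\bigl(g(C_i)-1\bigr)=|G|$ yields the exact relation $\prod_{i=1}^n\theta_i=2^n|G|^{-(n-1)}$; combined with $\theta_i\ge \tfrac1{42}$ this forces $|G|^{n-1}\le 84^n$, i.e. $|G|\le 84^{\,n/(n-1)}$. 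For $n\ge 8$ the two estimates collide, since $84^{\,n/(n-1)}\le 84^{8/7}<160<256\le 2^n$, ruling out $n\ge 8$ outright. For the two remaining cases the inequalities pin $|G|$ down sharply: when $n=7$ one is forced into $2^7\le |G|\le 84^{7/6}<177$ with at least $7$ prime factors, leaving only $|G|=128$ with all $g(C_i)=3$; when $n=6$ the same squeeze leaves $|G|\in\{64,96,128,144,160\}$. Each of these is a prime power or of the form $2^a\cdot 3$ or $2^5\cdot 5$, hence solvable and of small order.

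The main obstacle is therefore not the inequalities but the final elimination of these boundary cases, which is where the structural theory of small groups enters. Writing $\Sigma_i\subseteq G$ for the set of elements having a fixed point on $C_i$ (the union of $G$-conjugates of the cyclic subgroups generated by the $c_{i,j}$), freeness is precisely the condition $\bigcap_{i=1}^n\Sigma_i=\{1\}$. For each candidate order one must show that $G$ cannot carry $n$ spherical generating systems of the prescribed tiny orbifold area with this property. The relevant mechanism is that the prescribed areas constrain the cone orders $m_{i,j}$, and hence the generating elements, to a restricted set of orders; in a $2$-group such as one of order $128$ every generating element has even order and so powers down to an involution, and once a central involution is forced into every $\Sigma_i$ it lies in $\bigcap_i\Sigma_i$, contradicting freeness. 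Carrying out this analysis uniformly across the candidate groups of orders $64,96,128,144,160$ (and the genus-$2$ orders above), using centralizer and involution structure together with the list of spherical systems, is the technical heart of the argument and is what ultimately establishes $n\le 5$.
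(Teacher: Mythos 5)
Your numerical skeleton is exactly the paper's: the Euler-number identity $\prod_i(g(C_i)-1)=|G|$, the equivalence $b_1=0\iff C_i/G\cong\PP_1(\CC)$, Riemann--Hurwitz $g(C_i)-1=\tfrac{|G|}{2}\Theta(A_i)$, the bound $\Theta\ge\tfrac1{42}$ giving $|G|^{n-1}\le 84^n$, and $|G|\ge 2^n$ forcing $n\le 7$ --- all of this matches Lemmas~\ref{eulerzahl}--\ref{Theta+alpha} and Corollary~\ref{nleq6}. But the theorem's actual content lies in the finite-case eliminations, and there your proposal has genuine gaps rather than proofs. The genus-$2$ exclusion is only announced (``I would run through the classified actions\dots''): the paper's Proposition~\ref{genus>2} is a long seven-case analysis over Broughton's genus-$2$ list (eleven groups up to $\Gl_2(3)$ of order $48$), using non-cyclicity of $G$ and explicit conjugacy-class bookkeeping to defeat the freeness condition \eqref{sigmacondition} group by group; none of that is carried out in your sketch.

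More seriously, your one concrete mechanism for killing $n=7$, $|G|=128$ is wrong as stated: in a $2$-group the involutions obtained by powering the spherical generators need not be central, need not form a single conjugacy class, and need not be common to all seven systems $\Sigma(T_i)$, so no central involution is ``forced into every $\Sigma_i$'' (the paper's own analysis of $D_4$ and $D_{2,8,3}$ in Proposition~\ref{genus>2} shows how delicate exactly this bookkeeping is). The correct and much cheaper kill, which the paper uses, is purely arithmetic: a genus-$3$ action of a group of order $128$ would require a spherical system with $\alpha(A)=64$, i.e.\ $\Theta(A)=\tfrac1{32}$, and no admissible genus-zero signature has this value --- indeed no signature whose entries are powers of $2$ satisfies $\Theta=\tfrac1{32}$, and $64$ does not occur in the table of Corollary~\ref{Listoforders}. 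Your handling of $n=6$ has the same two defects: the candidate list $\{64,96,128,144,160\}$ omits $192=2^6\cdot 3$ (which lies in $[2^6,84^{6/5}]$ and has $\Omega=7\ge 6$ prime factors), and the elimination is deferred to a uniform centralizer/involution sweep over all groups of those orders --- thousands of isomorphism types (267 of order 64, 2328 of order 128, \dots), which is not feasible and is not done. The paper avoids this entirely: since $3^6=729>203$ some factor must have genus exactly $3$, so Broughton's genus-$3$ classification applies and leaves only $(Z_4\times Z_4)\rtimes S_3$ of order $96$ (excluded because $\alpha=32$ is inadmissible for the needed genus-$4$ action) and $\Gl_3(2)$ of order $168$ (excluded because $168$ is not a product of six factors $\ge 2$). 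Without these closing arguments your proposal establishes only $n\le 7$, not the stated conclusions.
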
 
\noindent Using Theorem~\ref{mainthm}, we then show that there is no finite group $G$ that satisfies all the necessary conditions for $X=(C_1\times\cdots\times C_n)/G$ to be in $\mathcal{S}_0((-2)^n,n)$.

Our initial motivation for considering varieties isogenous to a higher product was the search for varieties of general type with the same Betti numbers as the $n$-fold product $\mathbb P_1(\mathbb C)^n$ of  projective lines. We call such varieties \emph{fake $\mathbb P_1(\mathbb C)^n$}. A natural source for examples of fake $\mathbb P_1(\CC)^n$ come from varieties that are analytically isomorphic to the quotients $\mathbb H^n/\Gamma$ of the $n$-fold product of upper half planes by cocompact lattices $\Gamma$ inside $\text{PSL}_2(\mathbb R)^n$. The motivation comes from  Hirzebruch's proportionality principle which implies that the numerical invariants of such quotients are closely related to the invariants of $\mathbb P_1(\mathbb C)^n$. And indeed, several examples of fake $\mathbb P_1(\CC)^n$ of this type have been found where $\Gamma$ is an irreducible lattice; see \cite{dza1}, \cite{dza2}, \cite{Shavel}. Recall that a lattice $\Gamma\subset \text{PSL}_2(\mathbb R)^n$ is called \emph{irreducible} if its image under any non-trivial projection $\text{PSL}_2(\mathbb R)^n\to \text{PSL}_2(\mathbb R)^m$, for $m<n$, is a dense subgroup. Somehow complementary to this is the situation where $\Gamma$ is a lattice that contains a product $\Pi_1\times \cdots \times \Pi_n$ of surface groups $\Pi_k$, for $k=1,\ldots,n$, of genus at least $2$. Such a lattice may be called \emph{totally reducible} and in this case the quotient $\mathbb H^n/\Gamma$ is a variety isogenous to a higher product. Since fake $\PP_1(\CC)^n$ have first Betti number equal to zero,  Theorem~\ref{thm:main} implies:
       
\begin{cor}
    For $n\ge 4$, there are no fake $\mathbb{P}_1(\mathbb{C})^n$ isogenous to a higher product.
\end{cor}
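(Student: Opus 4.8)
The plan is to extract from the definition of a fake $\PP_1(\CC)^n$ exactly the numerical data appearing in $\mathcal{S}_0((-2)^n,n)$ and then appeal to Theorem~\ref{thm:main}. By definition such a variety $X$ is of general type and has the Betti numbers of $\PP_1(\CC)^n$; in particular $b_1(X)=0$, and since the odd Betti numbers of $\PP_1(\CC)^n$ vanish while $b_{2k}=\binom{n}{k}$, its topological Euler number is $\sum_{k=0}^n\binom{n}{k}=2^n$. These two facts are all I would use from the fake-$\PP_1(\CC)^n$ hypothesis.

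I would then run a parity argument on the Euler number. Assume $X$ is a fake $\PP_1(\CC)^n$ isogenous to a higher product, say $X=(C_1\times\cdots\times C_n)/G$ with unramified covering $\pi\colon C_1\times\cdots\times C_n\to X$ of degree $|G|$. Multiplicativity of the Euler characteristic under finite unramified maps gives $e(X)=\frac{1}{|G|}\prod_{i=1}^n(2-2g(C_i))$, and as every $g(C_i)>1$ each factor is negative, so $e(X)$ has sign $(-1)^n$. For $n$ odd this forces $e(X)<0$, contradicting $e(X)=2^n>0$; hence no fake $\PP_1(\CC)^n$ isogenous to a higher product exists in odd dimension, independently of Theorem~\ref{thm:main}.

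For $n$ even one has $(-2)^n=2^n$, so together with $b_1(X)=0$ the variety $X$ carries precisely the invariants attached to $\mathcal{S}_0((-2)^n,n)$. The step I expect to be the real obstacle is verifying the two structural hypotheses built into that set, namely that $X$ is of unmixed type and that $G$ is absolutely faithful, since Theorem~\ref{thm:main} applies only to such quotients. Absolute faithfulness I would handle by replacing the given realization with its minimal one, in which the induced action on each factor is faithful; this leaves $X$, and hence $e(X)$ and $b_1(X)$, unchanged. Excluding the mixed type is more delicate: a factor-permuting element of $G$ forces isomorphisms among the $C_i$ and acts nontrivially on $H^\ast(C_1\times\cdots\times C_n)$, and I would argue that the resulting invariant cohomology is incompatible with the prescribed Betti numbers $b_{2k}=\binom{n}{k}$, $b_{2k+1}=0$, of $\PP_1(\CC)^n$. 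Once $X$ is seen to be of unmixed type with $G$ absolutely faithful, it lies in $\mathcal{S}_0((-2)^n,n)$, which is empty by Theorem~\ref{thm:main}, and the corollary follows.
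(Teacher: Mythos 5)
Your core argument is exactly the paper's. The paper derives the corollary in one line: a fake $\PP_1(\CC)^n$ has $b_1(X)=0$ and, having the Betti numbers of $\PP_1(\CC)^n$, Euler number $e(X)=\sum_{k=0}^n\binom{n}{k}=2^n$; for odd $n$ the multiplicativity of $e$ under the unramified cover forces $e(X)=\tfrac{(-2)^n}{|G|}\prod_{i=1}^n(g(C_i)-1)<0$, a contradiction (this is precisely the remark the paper makes right after the corollary), while for even $n$ one has $2^n=(-2)^n$, so $X\in\mathcal S_0((-2)^n,n)$, which is empty by Theorem~\ref{thm:main}. Note, however, that the paper reads ``isogenous to a higher product'' within its standing assumptions: unmixed type and absolutely faithful action are built into the definition of $\mathcal S_0$, and the introduction explicitly states that the general (mixed, non-faithful) situation is deferred to future work. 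So the portion of your proposal that the paper actually contains is correct and identical in route; the two reduction steps you add are attempts to prove a strictly stronger, unrestricted statement.

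It is in those added steps that there are genuine problems. The reduction to an absolutely faithful action via a ``minimal realization'' is true but not the formal replacement you suggest: if $K_1$ is the kernel of the action on $C_1$, a nontrivial element of $K_1$ need only act freely on the product $C_2\times\cdots\times C_n$, not on any single factor, so for $n\ge 3$ the quotient $(C_2\times\cdots\times C_n)/K_1$ need not be a product of curves and the naive iteration breaks down; the existence of minimal realizations is a theorem (Catanese \cite{Catanese00} for $n=2$, Frapporti and Glei{\ss}ner \cite{FrapportiGleissner16} for $n=3$), not a routine manoeuvre. More seriously, your proposed exclusion of the mixed type is a gap: the claimed mechanism --- that a factor-permuting element makes the $G$-invariant cohomology incompatible with $b_{2k}=\binom{n}{k}$, $b_{2k+1}=0$ --- is false in general, since the classification of Bauer, Catanese and Grunewald \cite{BauerCataneseGrunewald08} contains surfaces isogenous to a product of \emph{mixed} type with $p_g=q=0$, i.e.\ with exactly the Betti numbers of $\PP_1(\CC)^2$ ($b_0=b_4=1$, $b_2=2$, $b_1=b_3=0$). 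Your sketch makes no use of $n\ge 4$ that would distinguish it from this case, so as stated it would ``prove'' something false in dimension $2$. For the corollary as the paper intends it none of this is needed; for the literal unrestricted statement, the mixed and non-faithful cases remain open, exactly as the authors acknowledge.
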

Note that the aforementioned surfaces isogenous to  a higher product discovered by Bauer, Catanese and Grunewald indeed give examples of fake $\mathbb P_1 (\mathbb C)^2$. The examples by Frapporti and Glei{\ss}ner are however not relevant, since the topological Euler number of a variety isogenous to a product of odd dimension is negative whereas the Euler number of the product of projective lines is always positive. Therefore the varieties isogenous to a higher product in odd dimensions cannot be fake $\mathbb{P}_1(\mathbb{C})^n$.

Finally we remark that in this paper we do not consider the most general situation, having restricted to the unmixed, absolutely faithful and free actions. Our aim is to study the situations with weaker conditions on $G$ in future, with the hope of finding examples of varieties isogenous to a higher product with  interesting geometric properties. In small dimensions this has already been considered; see \cite{BCGP12}. A theory of resolution of cyclic quotient singularities in arbitrary dimensions has been developed in ~\cite{Fujiki}. We note that the discussion of non-free actions of finite groups on products of curves is also interesting from the point of view of fundamental groups; see \cite{BCGP12} and \cite{DP}.\\

\smallskip

\noindent\textit{Organisation.}  In Section~\ref{sec:2} we collect together some useful results concerning invariants of varieties isogenous to higher products. Then in Section~\ref{sec:3}, we establish conditions that the group $G$ has to satisfy in order for $G$ to define a variety isogenous to a higher product with our prescribed numerical invariants. In Section~\ref{sec:4} we prove Theorem~\ref{mainthm}. Finally, in Section~\ref{sec:5}, using the results of the previous section together with combinatorial arguments, we first narrow down the possible options for the order of $G$, 
to then  complete the proof of Theorem~\ref{thm:main}.

\medskip

\noindent\textbf{Acknowledgements.} We thank Barbara Langfeld for her interest in the subject and her help by confirming several of our results using a computer. We also thank the referee for pointing out the work of Polizzi~\cite{Pol09}, which closes a gap in a result of Broughton~\cite[Table~5]{Broughton91}.

\section{Invariants of varieties isogenous to higher products} \label{sec:2}
The theory of varieties isogenous to a higher product was developed by Catanese in \cite{Catanese00}. We refer to this fundamental paper for all details on these varieties. 

As indicated in the introduction our standing assumption for the rest of the paper is that  $X=(C_1\times\cdots\times C_n)/G$ is a variety of unmixed type and where $G$ acts faithfully on each of the factors $C_i$. Recall that  $e(X)$ denotes the topological Euler number of $X$, and that $g(C_i)>1$ denotes the genus of $C_i$.

The following result is fundamental for our purposes and is well known. For the reader's convenience, we include a proof.
  
\begin{lemma}\label{eulerzahl}
Let $X=(C_1\times\cdots\times C_n)/G$ be a variety isogenous to a higher product. Then
\[
e(X)=\frac{(-2)^n}{|G|}\prod_{i=1}^n (g(C_i)-1).
\]
\end{lemma}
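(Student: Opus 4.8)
The plan is to compute the topological Euler number multiplicatively. The key tool is the behaviour of the Euler characteristic under finite unramified (étale) coverings: if $\pi\colon Y\to X$ is an unramified covering of degree $d$, then $e(Y)=d\cdot e(X)$. Since $X=(C_1\times\cdots\times C_n)/G$ with $G$ acting freely, the quotient map $\pi\colon C_1\times\cdots\times C_n\to X$ is precisely such a covering of degree $|G|$. Hence I would begin by writing
\[
e(C_1\times\cdots\times C_n)=|G|\cdot e(X),
\]
which immediately rearranges to $e(X)=e(C_1\times\cdots\times C_n)/|G|$.

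Next I would evaluate the Euler number of the product. The Euler characteristic is multiplicative for products of topological spaces (this follows from the Künneth formula, or from the product formula $e(Y\times Z)=e(Y)\,e(Z)$), so
\[
e(C_1\times\cdots\times C_n)=\prod_{i=1}^n e(C_i).
\]
Finally, each $C_i$ is a compact Riemann surface of genus $g(C_i)$, and its Euler number is $e(C_i)=2-2g(C_i)=-2\,(g(C_i)-1)$. Substituting this gives
\[
e(C_1\times\cdots\times C_n)=\prod_{i=1}^n\bigl(-2(g(C_i)-1)\bigr)=(-2)^n\prod_{i=1}^n(g(C_i)-1),
\]
and dividing by $|G|$ yields the claimed formula.

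There is no serious obstacle here; the statement is essentially a bookkeeping exercise assembling three standard facts. The only point that requires the hypotheses rather than pure topology is the multiplicativity of $e$ under the covering $\pi$: this relies on $\pi$ being \emph{unramified}, which is guaranteed precisely because $G$ acts \emph{freely} on the product — this is built into the definition of a variety isogenous to a higher product. If the action had fixed points the covering would be ramified and the degree formula would fail, so I would be careful to invoke freeness explicitly when passing from $e(C_1\times\cdots\times C_n)$ to $|G|\cdot e(X)$.
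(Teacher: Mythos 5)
Your proof is correct and follows essentially the same route as the paper: both compute $e(C_i)=-2(g(C_i)-1)$, use multiplicativity of the Euler number under products, and divide by $|G|$ via the degree formula $e(C_1\times\cdots\times C_n)=|G|\,e(X)$ for the unramified covering $\pi$. Your explicit remark that freeness of the $G$-action is what makes $\pi$ unramified is a sound clarification, but it does not change the argument.
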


\begin{proof}
For a compact Riemann surface $C$ of genus $g$ the Euler number equals $e(C)=2-2g=-2(g-1)$. As the Euler number is multiplicative with respect to taking products we have $e(C_1\times\cdots\times C_n)=(-2)^n\prod_{i=1}^n(g(C_i)-1)$. Finally, since $\pi:C_1\times\cdots\times C_n\rightarrow X$ is a finite unramified covering, we have $e(C_1\times\cdots\times C_n)=\deg(\pi)e(X)=|G|e(X)$.
\end{proof}

As an immediate consequence we obtain:

\begin{cor}
	\label{orderG}
Let $X=(C_1\times\cdots\times C_n)/G$ be a variety isogenous to a higher product with $e(X)=(-2)^n$. Then $|G|=\prod_{i=1}^n(g(C_i)-1)$.
\end{cor}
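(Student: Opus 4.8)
The plan is to read off the result directly from Lemma~\ref{eulerzahl} by specialising to the prescribed Euler number. First I would recall the formula established there, namely $e(X)=\frac{(-2)^n}{|G|}\prod_{i=1}^n(g(C_i)-1)$, and then substitute the hypothesis $e(X)=(-2)^n$ into its left-hand side. This produces the single equation $(-2)^n=\frac{(-2)^n}{|G|}\prod_{i=1}^n(g(C_i)-1)$, in which the factor $(-2)^n$ appears on both sides.

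Since $n\geq 1$, the quantity $(-2)^n$ is nonzero, so I can cancel it from both sides of the equation; this is the only point requiring any justification, and it is entirely routine. Rearranging the resulting identity then yields $|G|=\prod_{i=1}^n(g(C_i)-1)$, which is exactly the assertion. There is no genuine obstacle here: the corollary is an immediate algebraic consequence of the multiplicativity of the Euler number already encoded in Lemma~\ref{eulerzahl}, and the entire argument is a one-line cancellation. The content of the statement is conceptual rather than computational, in that it records the useful reformulation that prescribing $e(X)=(-2)^n$ is equivalent to tying the group order to the product of the reduced genera $g(C_i)-1$, a constraint that will presumably be exploited in the later combinatorial analysis of the possible orders of $G$.
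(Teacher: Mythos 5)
Your proposal is correct and is exactly the argument the paper intends: the corollary is stated as an immediate consequence of Lemma~\ref{eulerzahl}, obtained by substituting $e(X)=(-2)^n$ and cancelling the nonzero factor $(-2)^n$. Nothing is missing, and your remark that the cancellation is the only step needing (routine) justification is apt.
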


We now analyse the condition under which a variety isogenous to a higher product has vanishing first Betti number.

\begin{lemma}
	\label{kunneth}
Let $X=(C_1\times\cdots\times C_n)/G$ be a variety isogenous to a higher product. Then $b_1(X)=0$ if and only if for all $i\in\{1,\ldots,n\}$ we have $C_i/G\cong \PP_1(\CC)$.
\end{lemma}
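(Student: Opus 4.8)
The plan is to compute $b_1(X) = \dim_{\QQ} H^1(X,\QQ)$ by lifting the computation to the product $Y = C_1\times\cdots\times C_n$ and then descending via the $G$-action. First I would invoke the standard fact that, for a finite group acting freely on $Y$ with quotient $\pi\colon Y\to X$, the rational cohomology of the base is the $G$-invariant part of the cohomology of the total space, i.e. $H^*(X,\QQ)\cong H^*(Y,\QQ)^G$. This follows from the transfer homomorphism, which splits $\pi^*$ after inverting $|G|$. In particular $b_1(X) = \dim_{\QQ} H^1(Y,\QQ)^G$.

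Next I would apply the K\"unneth formula. Since $H^0(C_j,\QQ)=\QQ$ for each $j$, it gives $H^1(Y,\QQ)\cong\bigoplus_{i=1}^n H^1(C_i,\QQ)$, where the only contributions in total degree $1$ come from a single factor in degree $1$ and the rest in degree $0$. Crucially, because $X$ is of unmixed type, $G$ acts factorwise and does not permute the $C_i$; hence this is a decomposition of $G$-modules, with $G$ acting on the $i$-th summand through its action on $C_i$. Taking invariants therefore commutes with the direct sum, yielding $H^1(Y,\QQ)^G\cong\bigoplus_{i=1}^n H^1(C_i,\QQ)^G$.

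It then remains to identify each summand. Using that $H^*(-,\QQ)^G\cong H^*((-)/G,\QQ)$ one gets $H^1(C_i,\QQ)^G\cong H^1(C_i/G,\QQ)$, and since $C_i/G$ is again a compact Riemann surface, $\dim_{\QQ}H^1(C_i/G,\QQ)=2\,g(C_i/G)$. Assembling the pieces gives $b_1(X)=2\sum_{i=1}^n g(C_i/G)$. As each term is a non-negative integer, the sum vanishes if and only if $g(C_i/G)=0$ for every $i$, i.e. if and only if $C_i/G\cong\PP_1(\CC)$ for every $i$, which is the claim.

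The main obstacle I anticipate is the non-freeness of the $G$-action on the individual factors: the action on $Y$ is free, but the projected actions on the $C_i$ generally have fixed points, so $C_i\to C_i/G$ is a \emph{branched} covering and one cannot directly quote the ``free quotient equals invariants'' statement at this level. The clean way around this is to note that $H^*(-,\QQ)^G\cong H^*((-)/G,\QQ)$ holds for arbitrary (not necessarily free) finite group actions on a finite CW-complex over $\QQ$, again by transfer. Alternatively, one can run the whole argument with holomorphic $1$-forms, using $H^0(X,\Omega^1_X)=H^0(Y,\Omega^1_Y)^G=\bigoplus_i H^0(C_i,\Omega^1_{C_i})^G=\bigoplus_i H^0(C_i/G,\Omega^1_{C_i/G})$ together with $b_1=2q$, which leads to the same count $b_1(X)=2\sum_i g(C_i/G)$.
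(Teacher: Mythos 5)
Your proposal is correct and follows essentially the same route as the paper: K\"unneth in degree one, the identification $H^1(X)\cong H^1(C_1\times\cdots\times C_n)^G$, factorwise invariants (using the unmixed hypothesis), and $H^1(C_i)^G\cong H^1(C_i/G)$, concluding via $b_1(X)=2\sum_i g(C_i/G)$; working over $\QQ$ rather than $\CC$ is immaterial. If anything, you are more careful than the paper, which silently uses $H^1(C_i,\CC)^G\cong H^1(C_i/G,\CC)$ for the non-free projected actions --- exactly the branched-covering point your transfer argument addresses explicitly.
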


\begin{proof}
We will make use of K\"{u}nneth's formula. Namely, the first cohomology with complex coefficients of the product $C_1\times\cdots\times C_n$, is isomorphic to
\[
H^1(C_1\times\cdots\times C_n,\CC)\cong \bigoplus_{k_1+\cdots+k_n=1} H^{k_1}(C_1,\CC)\otimes\cdots\otimes H^{k_n}(C_n,\CC).
\]
Here, the sum runs over all $n$-tuples $(k_1,\ldots,k_n)$ of non-negative integers whose sum equals to $1$, i.e. exactly one of the entries equals $1$ and all the others equal $0$. Since $H^0(C_i,\CC)\cong \CC$ for all $i$ we obtain 
\[
H^1(C_1\times\cdots \times C_n,\CC)\cong \bigoplus_{i=1}^n H^1(C_i,\CC).
\]  
Now we use the fact that $H^1((C_1\times\cdots\times C_n)/G,\CC)\cong H^1(C_1\times\cdots\times C_n,\CC)^G$, where the latter space is the subspace of $H^1(C_1\times\cdots\times C_n,\CC)$ on which the (linear) action of $G$ lifted to cohomology is trivial. By the linearity of the lifted action we have 
\begin{align*}
H^1(X,\CC)&\cong H^1(C_1\times\cdots\times C_n,\CC)^G \\
&\cong \left( \bigoplus_{i=1}^n H^1(C_i,\CC)\right)^G\\
&\cong  \bigoplus_{i=1}^n H^1(C_i,\CC)^G \\
&\cong \bigoplus_{i=1}^n H^1(C_i/G,\CC).
\end{align*} 
Obviously $H^1(X,\CC)$ equals zero if and only if all the summands $H^1(C_i/G,\CC)$ are zero. This means that the quotient $C_i/G$ has genus zero, i.e. $C_i/G\cong \PP_1(\CC)$. 
\end{proof}

\section{Possible ramification structures of genus $0$ on finite groups} \label{sec:3}

In this section we will discuss properties of a finite group $G$ that defines a variety $X=(C_1\times\cdots\times C_n)/G$ isogenous to a higher product (acting absolutely faithfully and of unmixed type) such that $C_i/G\cong \PP_1(\CC)$ for every $i\in\{1,\ldots,n\}$ and with $b_1(X)=0$. Certainly we will always assume that $G$ is non-trivial.

We observe that the condition $C_i/G\cong\PP_1(\CC)$ implies that $G$ is a homomorphic image of a cocompact Fuchsian group $\Lambda$ of genus zero. This implies that we can find $n$ systems $T_i=T_i(G):=(h^{(i)}_1,\ldots,h^{(i)}_{r(i)})\in G^{r(i)}$, where $r(i)\in\mathbb{N}$ for $i\in\{1,\ldots,n\}$, of generators of $G$ such that $h^{(i)}_1\cdots h^{(i)}_{r(i)}=1$. Such generating systems are called \emph{spherical systems of generators} of~$G$. To a spherical system $T=(h_1,\ldots,h_r)$ of $G$, we associate a tuple $A=[m_1,\ldots,m_{r}]\in \mathbb N^{r}$, where $m_j=\text{o}(h_{j})$ is the order of $h_{j}$. We call $A$ the \emph{ramification type} of $T$.  

First we recall the group-theoretic condition for a finite group $G$, acting on Riemann surfaces $C_1,\ldots, C_n$ each of genus at least 2, where the action on $C_i$ is determined by the system  $T_i=(h_1^{(i)},\ldots,h_{r(i)}^{(i)})$ of spherical generators, to act freely on $C_1\times \cdots\times C_n$. For $i\in\{1,\ldots,n\}$, let $A_i=[m_1^{(i)},\ldots,m_{r(i)}^{(i)}]$ be the ramification type associated with $T_i$. We define for each $i\in\{1,\ldots,n\}$,
\[
\Sigma(T_i)=\bigcup_{j=1}^{r(i)}\bigcup_{k\in\mathbb N}\bigcup_{g\in G}\{g(h_j^{(i)})^{k}g^{-1}\}.
\]

\begin{lemma}
\label{sigma}
 Let $G$ be a finite group acting on Riemann surfaces $C_1,\ldots, C_n$ each of genus at least 2, and let $T_1,\ldots, T_n$ be the respective spherical systems of generators. Then, the action of $G$ on $C_1\times\cdots\times C_n$ is free if and only if 
\begin{equation}
	\label{sigmacondition}
\bigcap_{i=1}^{n}\Sigma(T_i)=\{1_G\}.
\end{equation}
\end{lemma}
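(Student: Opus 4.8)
The plan is to reduce the freeness of the diagonal action on the product to a statement about fixed points on each individual factor, and then to identify $\Sigma(T_i)$ with the set of elements of $G$ that fix some point of $C_i$. First I would record the basic reduction. Since we are in the unmixed case, $G$ acts diagonally, so that $g\cdot(x_1,\ldots,x_n)=(g\cdot x_1,\ldots,g\cdot x_n)$. Hence an element $g\in G$ fixes a point $(x_1,\ldots,x_n)\in C_1\times\cdots\times C_n$ precisely when $g\cdot x_i=x_i$ for every $i$. Consequently, a non-trivial element $g$ has a fixed point on the product if and only if $g$ has a fixed point on each factor $C_i$, and the action on the product is free if and only if no non-trivial element of $G$ simultaneously fixes a point on every $C_i$.

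The heart of the argument is the identification
\[
\Sigma(T_i)=\{g\in G\mid g\text{ fixes some point of }C_i\}.
\]
To establish this I would invoke the structure of the Galois branched covering $C_i\to C_i/G\cong\PP_1(\CC)$ determined by the spherical generating system $T_i$: the homomorphism from the genus-zero Fuchsian group $\Lambda$ onto $G$ sends the standard generator encircling the $j$-th branch point to $h_j^{(i)}$. By Riemann's existence theorem, the points of $C_i$ lying above the $j$-th branch point are in bijection with the cosets $G/\langle h_j^{(i)}\rangle$, the stabilizer of the point corresponding to $g\langle h_j^{(i)}\rangle$ being the cyclic group $g\langle h_j^{(i)}\rangle g^{-1}$, while every point of $C_i$ not lying over a branch point has trivial stabilizer. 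Taking the union of all these stabilizers over all branch indices $j$ and all cosets recovers exactly $\bigcup_{j}\bigcup_{g\in G}g\langle h_j^{(i)}\rangle g^{-1}$, which is $\Sigma(T_i)$; indeed the inner union over $k\in\NN$ in the definition of $\Sigma(T_i)$ just ranges over the powers of $h_j^{(i)}$ and in particular includes $1_G$. This is the step I expect to require the most care, since it rests on correctly matching the branch data of the covering with the ramification type of $T_i$, and on the standard fact that stabilizers of ramification points are cyclic and exhaust the conjugates of the $\langle h_j^{(i)}\rangle$.

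With the identification in hand, the lemma follows immediately. By the reduction in the first paragraph, a non-trivial $g$ fixes a point on every $C_i$ if and only if $g\in\Sigma(T_i)$ for all $i$, that is, $g\in\bigcap_{i=1}^n\Sigma(T_i)$. Since $1_G$ always lies in this intersection, the action on $C_1\times\cdots\times C_n$ is free exactly when $\bigcap_{i=1}^n\Sigma(T_i)$ contains no non-trivial element, which is precisely condition \eqref{sigmacondition}.
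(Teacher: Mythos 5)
Your proof is correct and follows essentially the same route as the paper: both reduce freeness of the diagonal action to the statement that no non-trivial element fixes a point on every factor, and both rest on the identification of $\Sigma(T_i)$ with the set of elements having a fixed point on $C_i$. The only difference is that the paper asserts this identification as a known fact, whereas you justify it via the structure of the Galois covering $C_i\to C_i/G$ and the cyclic stabilizers of ramification points, which is exactly the standard argument the paper leaves implicit.
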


\begin{proof}
An element $g\in G$ will have a fixed point on $C_i$ if and only if $g$ is a conjugate of some power $(h_j^{(i)})^k$ of a spherical generator from $T_i$. By definition, $\Sigma(T_i)$ is then the set of elements of $G$ that have a fixed point on $C_i$. Therefore, an element $g\in G$ will have a fixed point on $C_1\times\cdots\times C_n$ if and only if $g\in \bigcap_{i=1}^{n}\Sigma(T_i)$. 
\end{proof}

For a given spherical system $T_i=(h^{(i)}_1,\ldots,h^{(i)}_{r(i)})$ of generators of $G$ and associated ramification type $A_i=[m_1^{(i)},\ldots, m_{r(i)}^{(i)}]$ we define 
\[
\Theta(A_i)=-2+\sum_{j=1}^{r(i)}\left( 1-\frac{1}{m_j^{(i)}}\right)\qquad \text{and}\qquad \alpha(A_i)=\frac{2}{\Theta(A_i)}.
\]  
In the following we will assume that entries of a type $A=[m_1,\ldots,m_r]$ corresponding to a system of spherical generators $T=(h_1,\ldots,h_r)$ satisfy $m_1\leq m_2\leq\ldots\leq m_r$. This is mainly for convenience and will cause no loss of generality. Note that this situation cannot be established by simply re-enumerating the generators, since then, in general the condition $h_1\cdots h_r=1$ will not be satisfied.  

\begin{lemma}
	\label{Theta+alpha}
Let $X=(C_1\times\cdots\times C_n)/G\in \mathcal S_0((-2)^n,n)$.  Then we have for each $i\in\{1,\ldots,n\}$:
\begin{enumerate}[(i)]
\item[\textup{(i)}] $2(g(C_i)-1)=|G|\Theta(A_i)$;
\item[\textup{(ii)}] $\Theta(A_i)=2(g(C_i)-1)/|G|>0$;
\item[\textup{(iii)}] $\alpha(A_i)=2/\Theta(A_i)=\prod_{j\neq i} (g(C_j)-1)\in\mathbb N$.
\end{enumerate}
\end{lemma}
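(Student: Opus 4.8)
The plan is to derive all three statements from a single application of the Riemann--Hurwitz formula to the quotient maps $C_i\to C_i/G$. Since $X\in\mathcal S_0((-2)^n,n)$ we have $b_1(X)=0$, so Lemma~\ref{kunneth} ensures that $C_i/G\cong\PP_1(\CC)$ has genus $0$ for every $i$. The spherical system $T_i=(h_1^{(i)},\ldots,h_{r(i)}^{(i)})$ encodes the branch locus of this cover: there are $r(i)$ branch points, and the orders of the cyclic stabilisers over the $j$-th branch point equal $m_j^{(i)}=\mathrm{o}(h_j^{(i)})$. Riemann--Hurwitz for the degree-$|G|$ cover $C_i\to C_i/G$ therefore reads
\[
2g(C_i)-2=|G|\left(-2+\sum_{j=1}^{r(i)}\left(1-\frac{1}{m_j^{(i)}}\right)\right),
\]
and the bracket on the right is exactly $\Theta(A_i)$ by definition. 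This immediately yields part~(i), namely $2(g(C_i)-1)=|G|\Theta(A_i)$.

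Part~(ii) follows by dividing (i) by $|G|>0$ to obtain $\Theta(A_i)=2(g(C_i)-1)/|G|$; since every factor $C_i$ has genus greater than $1$, the numerator is strictly positive and hence $\Theta(A_i)>0$. In particular $\alpha(A_i)=2/\Theta(A_i)$ is well defined and positive.

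For part~(iii) I would substitute (i) into the definition of $\alpha$, obtaining $\alpha(A_i)=2/\Theta(A_i)=|G|/(g(C_i)-1)$. Applying Corollary~\ref{orderG}, which gives $|G|=\prod_{k=1}^n(g(C_k)-1)$, and cancelling the factor $g(C_i)-1$ leaves $\alpha(A_i)=\prod_{j\neq i}(g(C_j)-1)$. Each $g(C_j)-1$ is a positive integer because every $C_j$ has genus at least $2$, so this product lies in $\mathbb N$, completing (iii).

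There is no serious obstacle here: the computation is purely formal once Riemann--Hurwitz is in place. The only point requiring care is the identification of the branch data of $C_i\to C_i/G$ with the ramification type $A_i$ attached to the spherical system $T_i$ --- that is, checking that the stabiliser orders at the branch points are precisely the $m_j^{(i)}$ --- which is exactly the correspondence between spherical systems of generators and $G$-covers of $\PP_1(\CC)$ recalled in Section~\ref{sec:3}.
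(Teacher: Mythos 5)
Your proof is correct and follows essentially the same route as the paper: part (i) is the Riemann--Hurwitz formula for the degree-$|G|$ cover $C_i\to C_i/G\cong\PP_1(\CC)$ (with $C_i/G$ of genus zero via Lemma~\ref{kunneth}), part (ii) follows by dividing by $|G|$ and using $g(C_i)>1$, and part (iii) follows by combining with Corollary~\ref{orderG} and cancelling the factor $g(C_i)-1$. The paper states these steps more tersely, but the content is identical, and your extra care in matching the branch data of the cover to the ramification type $A_i$ is a legitimate filling-in of what the paper leaves implicit.
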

Note that part (i) is referred to as the Riemann-Hurwitz relation.
\begin{proof}
The statement (i) is simply the Riemann-Hurwitz formula. Part (ii) is obvious from (i) (as $g(C_i)>1$). The last statement is clear from (ii) and Corollary~\ref{orderG}.
\end{proof}

The following lemma restricts the possible orders of the spherical generators of $G$ that define $X$. Its proof is based on arguments from \cite[Proposition 2.3]{BauerCataneseGrunewald08} in the case $n=2$.

\begin{lemma}
	\label{upper_bound_on_m}
Let $X=(C_1\times\cdots\times C_n)/G\in \mathcal S_0((-2)^n,n)$. For each fixed $i\in\{1,\ldots,n\}$ we have $m_j^{(i)}\mid \alpha(A_i)$ for all $j\in\{1,\ldots,r(i)\}$.
\end{lemma}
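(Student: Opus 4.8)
The plan is to attach to each spherical generator a free action of a cyclic group on the product of the \emph{other} curves, and then extract the divisibility from a multiplicative invariant of that product. Fix $i$ and $j$, write $h := h_j^{(i)}$, $m := m_j^{(i)} = \mathrm{o}(h)$ and $H := \langle h\rangle$, and set $P_i := \prod_{k\neq i} C_k$. I would first show that $H$ acts freely on $P_i$. By the description of $\Sigma(T_i)$ in the proof of Lemma~\ref{sigma}, the generator $h$ fixes a point of $C_i$, hence so does every power $h^a$, so $h^a\in\Sigma(T_i)$ for all $a$. If some $h^a$ with $0<a<m$ fixed a point of $P_i$, it would fix a point on each factor $C_k$, $k\neq i$, giving $h^a\in\Sigma(T_k)$ for all $k\neq i$; combined with $h^a\in\Sigma(T_i)$ this forces $h^a\in\bigcap_{k=1}^n\Sigma(T_k)=\{1_G\}$ by Lemma~\ref{sigma}, contradicting $0<a<\mathrm{o}(h)$. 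Hence the action of $H$ on $P_i$ is free.

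The quotient $q\colon P_i\to P_i/H$ is then an unramified covering of degree $m$ of smooth projective varieties, and here I would switch from the topological Euler number to the \emph{holomorphic} Euler characteristic $\chi(\OO)$. The latter is multiplicative in the degree of an unramified covering --- by Riemann--Roch, since $\chi(\OO)$ is the integral of a characteristic class and $T_{P_i}\cong q^*T_{P_i/H}$ --- so $\chi(\OO_{P_i})=m\,\chi(\OO_{P_i/H})$, whence $m\mid\chi(\OO_{P_i})$. On the other hand $\chi(\OO)$ is multiplicative for products (K\"unneth for the structure sheaf) and equals $1-g(C_k)$ on each curve, so
\[
\chi(\OO_{P_i})=\prod_{k\neq i}\bigl(1-g(C_k)\bigr)=(-1)^{n-1}\prod_{k\neq i}\bigl(g(C_k)-1\bigr)=(-1)^{n-1}\alpha(A_i),
\]
the last equality by Lemma~\ref{Theta+alpha}(iii). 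Therefore $m$ divides $\pm\alpha(A_i)$, which is the claim.

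The step I expect to be the crux is exactly this choice of invariant. Running the same freeness argument with the topological Euler number only gives $m\mid e(P_i)=(-2)^{n-1}\alpha(A_i)$, i.e.\ $m\mid 2^{n-1}\alpha(A_i)$, which for $n>2$ is weaker than what we want by a power of $2$; this is why the case $n=2$ of \cite{BauerCataneseGrunewald08}, where $P_i$ is a single curve and $e$ already equals $-2(g-1)$, does not generalise verbatim. Passing to $\chi(\OO)$, whose value $1-g$ on a curve carries no extraneous factor of $2$, removes the defect, and its multiplicativity under both products and unramified coverings makes the argument work uniformly in every dimension.
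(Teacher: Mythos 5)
Your proof is correct, but it takes a genuinely different route from the paper's. The paper argues curve-by-curve: since $h:=h_j^{(i)}$ has a fixed point on $C_i$, freeness of the $G$-action on the product supplies some $k\neq i$ on which $h$ is fixed-point free; the paper then treats the covering $C_k\to C_k/\langle h\rangle$ as unramified, so the Riemann--Hurwitz relation gives $m_j\bigl(g(\widetilde C_k)-1\bigr)=g(C_k)-1$, whence $m_j\mid g(C_k)-1$, and Lemma~\ref{Theta+alpha}(iii) finishes exactly as in your argument. You instead make the whole cyclic group $\langle h\rangle$ act freely on the product $P_i=\prod_{k\neq i}C_k$ and replace curve-level Riemann--Hurwitz by multiplicativity of the holomorphic Euler characteristic $\chi(\OO)$ under finite unramified coverings. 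The trade-offs are instructive. The paper's conclusion is nominally sharper --- $m_j$ divides a single factor $g(C_k)-1$ rather than only the product $\alpha(A_i)$ --- but its unramifiedness claim strictly requires every nontrivial power of $h$, not just $h$ itself, to act freely on the \emph{same} curve $C_k$, and the stated choice of $k$ only controls $h$ (a proper power $h^a$ could a priori fix points of $C_k$ while $h$ does not, since condition \eqref{sigmacondition} only forces each power to be free on \emph{some} curve, possibly depending on the power); one can repair this within the paper's strategy, e.g.\ prime by prime, but your product-level freeness argument handles all powers of $h$ simultaneously in one stroke, precisely because \eqref{sigmacondition} is a statement about the product. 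The price you pay is the heavier input (Hirzebruch--Riemann--Roch and K\"unneth for $\chi(\OO)$ in place of Riemann--Hurwitz for curves), and a slightly weaker intermediate statement. Your closing diagnosis is also accurate: the topological Euler number of $P_i$ only yields $m_j\mid 2^{n-1}\alpha(A_i)$, and the paper sidesteps this not by changing the invariant, as you do, but by staying on a single curve, where $e(C_k)=-2(g(C_k)-1)$ and Riemann--Hurwitz deliver the divisibility without extraneous powers of $2$.
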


\begin{proof}
To simplify the notation we omit the superscript $i$ associated to the factor $C_i$, which is fixed in this proof, when considering elements $h_j=h_j^{(i)}$ of $T_i$. Accordingly we write $m_j=m_j^{(i)}$ and $r=r(i)$. Let $j\in\{1,\ldots, r\}$ be arbitrary. As $G$ is embedded in $\text{Aut}(C_i)$ the element  $h_j$ of order  $m_j$ has a fixed point on $C_i$, but there exists $k\neq i$ such that $h_j$ acts without fixed points on $C_k$. Otherwise $h_j$ would have a fixed point on $C_1\times\cdots\times C_n$ and this would contradict the assumption that $G$ acts freely on this product. We consider the covering 
\[
C_k\longrightarrow C_k/\langle  h_j\rangle=:\widetilde{C}_k.
\]     
By the choice of $C_k$ this covering is unramified and by the Riemann-Hurwitz relation we have
\[
 m_j\left(g(\widetilde{C}_k)-1\right)=g(C_k)-1.
\]
Since $g(C_k)>1$ we have $g(\widetilde{C}_k)>1$. It follows immediately that $m_j\mid (g(C_k)-1)$. Since $k\neq i$, Lemma \ref{Theta+alpha}(iii) implies that $m_j\mid \alpha(A_i)$.
\end{proof}

In light of the above result, we recall the following result from \cite[Proposition 1.4]{BauerCataneseGrunewald08}:
\begin{cor}
	\label{Listoforders}
Let $X=(C_1\times\cdots\times C_n)/G\in \mathcal S_0((-2)^n,n)$. For each $i\in\{1,\ldots,n\}$ we have $3\leq r(i)\leq 6$. In fact we have the following further restrictions:
\begin{enumerate}
\item[\textup{(a)}] $r(i)=6$ if and only if $A_i=[2,2,2,2,2,2]$ with corresponding $\alpha(A_i)=2$;\\
\item[\textup{(b)}] $r(i)=5$ if and only if $A_i=[2,2,2,2,2]$ with $\alpha(A_i)=4$;\\
\item[\textup{(c)}] $r(i)=4$  if and only if $A_i$ and its corresponding $\alpha(A_i)$ belong to the following list: 

\begin{flushleft}
	\begin{tabular}{|c||c|c|c|c|c|c|}
		\hline
		$A_i$ & $[2,2,2,3]$ & $[2,2,2,4]$ & $[2,2,2,6]$ & $[2,2,3,3]$ & $[2,2,4,4]$ &         $[3,3,3,3]$ \\
		\hline
		$\alpha(A_i)$ & $12$ & $8$ & $6$ & $6$ & $4$         & $3$ \\ 
		\hline
	\end{tabular}\\
\end{flushleft}

\medskip

\item[\textup{(d)}] $r(i)=3$ if and only if $A_i$ belongs to one of the following $22$ triples, with the corresponding $\alpha(A_i)$ value:

\begin{flushleft}
	\begin{tabular}{|c||c|c|c|c|c|c|c|c|}
	\hline
	$A_i$ & $[2,3,7]$ & $[2,3,8]$ & $[2,4,5]$ & $[2,3,9]$ & $[2,3,10]$ & $[2,3,12]$ & $[2,4,6]$ & $[3,3,4]$  \\
	\hline
	$\alpha(A_i)$ & $84$ & $48$ & $40$ & $36$ & $30$ & $24$ & $24$ & $24$ \\ 
	\hline
	\end{tabular}
 \end{flushleft}

 \smallskip

\begin{flushleft}
 \begin{tabular}{|c||c|c|c|c|c|c|c|c|}
  \hline
	$A_i$     & $[2,5,5]$ & $[2,3,18]$ & $[2,4,8]$     & $[3,3,5]$ & $[2,4,12]$ & $[2,6,6]$ & $[3,3,6]$ & $[3,4,4]$\\
	\hline
	$\alpha(A_i)$     & $20$ & $18$ & $16$     & $15$ & $12$ & $12$ & $12$ & $12$ \\ 
 \hline
	\end{tabular}
  \end{flushleft}

 \smallskip
 \begin{flushleft}
 \begin{tabular}{|c||c|c|c|c|c|c|}
  \hline
	$A_i$  & $[2,5,10]$  & $[3,3,9]$ & $[2,8,8]$   & $[4,4,4]$ & $[3,6,6]$    & $[5,5,5]$ \\
	\hline
	$\alpha(A_i)$ & $10$ &  $9$ & $8$     & $8$ & $6$    & $5$ \\ 
 \hline
	\end{tabular}
  \end{flushleft}
\end{enumerate}
\end{cor}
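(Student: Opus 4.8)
The plan is to read the three facts already established as a purely combinatorial classification problem: the hyperbolicity $\Theta(A_i)>0$ from Lemma~\ref{Theta+alpha}(ii), the integrality $\alpha(A_i)=2/\Theta(A_i)\in\mathbb{N}$ from Lemma~\ref{Theta+alpha}(iii), and the divisibility $m_j^{(i)}\mid\alpha(A_i)$ for all $j$ from Lemma~\ref{upper_bound_on_m}. Throughout I fix $i$ and drop the superscript, writing $A=[m_1,\ldots,m_r]$ with $2\le m_1\le\cdots\le m_r$, and abbreviating $\alpha=\alpha(A)$, $\Theta=\Theta(A)$.

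First I would pin down the range of $r$. Each spherical generator is non-trivial, so $m_j\ge 2$ and hence $1-1/m_j\ge 1/2$. If $r\le 2$ then $\sum_j(1-1/m_j)<2$ and $\Theta<0$, contradicting $\Theta>0$; this gives $r\ge 3$. For the upper bound, note that since $m_1\ge 2$ divides $\alpha$, we must have $\alpha\ge 2$, whence $\Theta=2/\alpha\le 1$ and $\sum_j(1-1/m_j)=2+\Theta\le 3$. Combining with $1-1/m_j\ge 1/2$ gives $r/2\le 3$, that is, $r\le 6$. It is worth emphasising that it is exactly the divisibility $m_j\mid\alpha$ that forces $\alpha\ge 2$ and so rules out the a priori admissible value $\alpha=1$; without it, integrality alone would permit $\Theta\le 2$ and hence $r\le 8$, the borderline cases $r=7,8$ built from involutions being eliminated only by $2\nmid 1$.

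Next I would run the classification for each admissible $r$. For $r=6$ the bound $\sum_j(1-1/m_j)\le 3$ forces equality with every term equal to $1/2$, so $A=[2,2,2,2,2,2]$ and $\Theta=1$, $\alpha=2$, proving (a). For $r=5$ the five terms each at least $1/2$ give $\Theta\in\{1/2,2/3,1\}$, hence $\alpha\in\{4,3,2\}$; testing $m_j\mid\alpha$ against $\sum_j(1-1/m_j)=2+2/\alpha$ leaves only $A=[2,2,2,2,2]$ with $\alpha=4$, proving (b). The case $r=4$ is a short finite search: enumerating ordered tuples with $\sum_j(1-1/m_j)\in(2,3]$ and retaining only those for which $2/\Theta$ is an integer divisible by each entry yields exactly the six types of table (c), while near-misses such as $[2,3,3,3]$ (which forces $\alpha=4$ but $3\nmid 4$) or $[2,2,3,4]$ (which forces $\alpha=24/5\notin\mathbb{N}$) are killed precisely by these two conditions.

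The main work, and the step I expect to be the real obstacle, is the triangle case $r=3$, where hyperbolicity reads $1/m_1+1/m_2+1/m_3<1$ and $\alpha=2/(1-1/m_1-1/m_2-1/m_3)$. Here hyperbolicity alone gives no upper bound on $m_3$, so finiteness must come entirely from the divisibility $m_3\mid\alpha$. This yields $m_3\Theta\le 2$, and chasing the resulting inequality $1-1/m_1-1/m_2-1/m_3\le 2/m_3$ together with $m_1\le m_2\le m_3$ forces first $m_1\le 5$, then, for each fixed $m_1$, only finitely many $m_2$, and finally only finitely many $m_3$. The clean way to organise the remaining check is to split on $m_1\in\{2,3,4,5\}$ and, within each, on $m_2$, retaining only those $m_3$ for which $\alpha$ is an integer divisible by each of $m_1,m_2,m_3$; this produces the $22$ triples of table (d). Assembling the four cases $r\in\{3,4,5,6\}$ gives the complete list and finishes the proof.
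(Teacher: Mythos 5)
Your proposal is correct, but it takes a different route from the paper in an important formal sense: the paper does not prove this corollary at all --- it is explicitly \emph{recalled} from Bauer--Catanese--Grunewald \cite[Proposition 1.4]{BauerCataneseGrunewald08}, with Lemma~\ref{upper_bound_on_m} serving as the justification that their hypotheses apply. What you have done is supply the self-contained enumeration that the citation hides, and you have done it from exactly the right ingredients: $\Theta>0$ and $\alpha\in\mathbb{N}$ from Lemma~\ref{Theta+alpha}, plus the divisibility $m_j\mid\alpha$ from Lemma~\ref{upper_bound_on_m}. Your structural skeleton is sound and I verified it in detail: $m_1\mid\alpha$ forces $\alpha\ge 2$, hence $\Theta\le 1$ and $r\le 6$; the $r=6$ and $r=5$ cases collapse as you say (for $r=5$, the candidates $[2,2,2,2,3]$, $[2,2,2,3,6]$, $[2,2,2,4,4]$, $[2,2,3,3,3]$ all die on divisibility, leaving $[2,2,2,2,2]$ with $\alpha=4$); the $r=4$ search yields precisely the six listed types, with your named near-misses $[2,3,3,3]$ ($3\nmid 4$) and $[2,2,3,4]$ ($\alpha=24/5$) correctly diagnosed; and for $r=3$ the key inequality $m_3\Theta\le 2$, i.e.\ $m_3(1-1/m_1-1/m_2)\le 3$, correctly bounds $m_1\le 5$ and makes the search finite --- carrying it out reproduces exactly the $22$ triples with the stated $\alpha$ values. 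The only blemish is your parenthetical aside: the all-involution tuple $[2,2,2,2,2,2,2]$ at $r=7$ has $\alpha=4/3\notin\mathbb{N}$ and so is already killed by integrality, not by $2\nmid 1$; it is the mixed $r=7$ tuples with $\Theta=2$ such as $[2,2,2,2,2,3,6]$, together with $[2^8]$, that need the divisibility condition. This does not affect your proof, since you invoke $\alpha\ge 2$ before bounding $r$. In terms of trade-offs, the paper's citation is economical and defers to a standard reference (whose Proposition 1.4 is proved by essentially the same arithmetic you carry out), while your version makes the corollary logically independent of \cite{BauerCataneseGrunewald08} and makes visible exactly which of the three arithmetic constraints eliminates each borderline type --- information the paper later relies on implicitly, for instance when Corollary~\ref{cor:alpha-1} uses the absence of $\alpha=1$ from the list.
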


\smallskip

We end this section with the following straightforward application of the previous result.
\begin{cor}
	\label{cor:alpha-1}
Let $X=(C_1\times\cdots\times C_n)/G\in \mathcal S_0((-2)^n,n)$. Then  we have at least two distinct indices $i_1,i_2\in\{1,\ldots,n\}$ such that $g(C_{i_1})$ and $g(C_{i_2})$ are both strictly greater than 2.
\end{cor}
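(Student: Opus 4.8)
The plan is to derive a contradiction from the multiplicative formula for $\alpha(A_i)$ in Lemma~\ref{Theta+alpha}(iii), exploiting the fact that every admissible value of $\alpha$ listed in Corollary~\ref{Listoforders} is at least $2$.

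First I would collect the two ingredients I need. From Lemma~\ref{Theta+alpha}(iii), for every $i\in\{1,\ldots,n\}$ we have
\[
\alpha(A_i)=\prod_{j\neq i}\bigl(g(C_j)-1\bigr),
\]
and since each factor $C_j$ has genus $g(C_j)>1$, every term $g(C_j)-1$ occurring here is a positive integer. Second, scanning the complete list of admissible ramification types in Corollary~\ref{Listoforders}, the associated value $\alpha(A_i)$ is never smaller than $2$, the minimum $2$ being realised only by the type $[2,2,2,2,2,2]$. Hence $\alpha(A_i)\ge 2$ for all $i$.

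Next I would assume, aiming for a contradiction, that at most one index $i$ satisfies $g(C_i)>2$; equivalently, at most one of the positive integers $g(C_i)-1$ exceeds $1$. Choosing $i_0$ to be that exceptional index when it exists, and otherwise letting $i_0$ be arbitrary, every factor $C_j$ with $j\neq i_0$ then has genus exactly $2$, so that $g(C_j)-1=1$. Substituting into the displayed formula gives
\[
\alpha(A_{i_0})=\prod_{j\neq i_0}\bigl(g(C_j)-1\bigr)=1,
\]
which contradicts $\alpha(A_{i_0})\ge 2$. Therefore there must exist at least two distinct indices $i_1,i_2$ with $g(C_{i_1}),g(C_{i_2})>2$, as claimed.

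I do not expect a genuine obstacle here, since the statement is a short consequence of the two recorded facts. The only point requiring a little care is organising the case distinction through a single exceptional index $i_0$, so that the possibilities of zero and of exactly one high-genus factor are handled simultaneously; this also tacitly uses $n\ge 2$, which holds for the products under consideration.
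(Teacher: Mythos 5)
Your proof is correct and follows essentially the same route as the paper: assume at most one factor of genus greater than $2$, deduce via Lemma~\ref{Theta+alpha}(iii) that $\alpha(A_{i_0})=1$ for a suitable index $i_0$, and contradict the fact that every type in Corollary~\ref{Listoforders} has $\alpha\ge 2$. The paper phrases the contradiction through $\Theta(A)=2$ and the genus $|G|+1$ surface, but this is the same computation; your uniform handling of the zero- and one-exceptional-index cases via an arbitrary $i_0$ is a harmless streamlining of the paper's reduction to exactly one such index.
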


\begin{proof}
    Suppose on the contrary that there is at most one such index. By Corollary~\ref{orderG}, we in fact may assume that there is exactly one such index~$i$. Again using Corollary~\ref{orderG}, it then follows that $G$  acts on a surface of genus $|G|+1$ and with quotient isomorphic to $\PP_1(\mathbb{C})$. Hence there exists a spherical generating system $T=(h_1,\ldots,h_r)$ of $G$ with $A=[\text{o}(h_1),\ldots,\text{o}(h_r)]=[m_1,\ldots,m_r]$ satisfying the Riemann-Hurwitz relation  $2=\Theta(A)$ (see Lemma~\ref{Theta+alpha}(i)), or equivalently $\alpha(A)=1$. But the list of possible ramification types from Corollary~\ref{Listoforders} does not contain any such type. This yields the desired contradiction.
\end{proof}

\section{Dimension bounds} \label{sec:4}

We begin with the observation that $G$ cannot be cyclic, even in a more general setting, as noted below.

\begin{lemma}
	\label{notcyclic}
Let $n\geq 2$ and $X=(C_1\times\cdots\times C_n)/G\in \mathcal S_0(e,n)$ with $e$ arbitrary. Then $G$ is not cyclic.
\end{lemma}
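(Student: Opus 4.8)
The plan is to argue by contradiction: suppose $G=\langle g\rangle$ is cyclic of order $N$. First note that $G$ must be non-trivial, since if $G=\{1\}$ then $C_i/G=C_i$ has genus $g(C_i)>1$, forcing $b_1(X)\neq 0$ and placing $X$ outside $\mathcal S_0(e,n)$; so $N>1$. Because $X\in\mathcal S_0(e,n)$ we have $b_1(X)=0$, and Lemma~\ref{kunneth} then gives $C_i/G\cong\PP_1(\CC)$ for every $i$, so $G$ carries spherical systems of generators $T_i=(h^{(i)}_1,\ldots,h^{(i)}_{r(i)})$ for each $i$. Since the action on the product is free, Lemma~\ref{sigma} yields
\[
\bigcap_{i=1}^n\Sigma(T_i)=\{1_G\}.
\]
The strategy is to contradict this by producing a single non-trivial element lying in every $\Sigma(T_i)$.

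The decisive simplification is that $G$ is abelian, so conjugation is trivial and each fixed-point locus reduces to a union of cyclic subgroups,
\[
\Sigma(T_i)=\bigcup_{j=1}^{r(i)}\langle h^{(i)}_j\rangle.
\]
I would then fix any prime $p\mid N$ and let $H_p$ be the \emph{unique} subgroup of $G$ of order $p$ (uniqueness being exactly the feature of a cyclic group I intend to exploit). The core step is to show $H_p\subseteq\Sigma(T_i)$ for every $i$. For this I use that the $h^{(i)}_j$ generate the cyclic group $G$: writing $h^{(i)}_j=g^{a_j}$, generation is equivalent to $\gcd(a_1,\ldots,a_{r(i)},N)=1$, so the $a_j$ cannot all be divisible by $p$. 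Hence for each $i$ at least one generator $h^{(i)}_j$ has order divisible by $p$, and its cyclic subgroup $\langle h^{(i)}_j\rangle$ therefore contains the unique order-$p$ subgroup $H_p$. This gives $H_p\subseteq\Sigma(T_i)$ for each $i$, and consequently $H_p\subseteq\bigcap_{i=1}^n\Sigma(T_i)$, contradicting the freeness condition since $H_p\neq\{1_G\}$.

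The step I expect to require the most care is the elementary divisibility argument guaranteeing that each spherical system contains a generator of order divisible by $p$; everything else rests on the rigid subgroup lattice of a cyclic group, namely the uniqueness of $H_p$, which is precisely what makes the common fixed locus unavoidable and would fail for a general abelian group (where several subgroups of order $p$ may exist). I would emphasise that the argument uses neither the value of $e$ nor the precise dimension $n$, so it applies in the stated generality.
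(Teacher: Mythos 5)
Your proof is correct and takes essentially the same approach as the paper: both fix a prime $p$ dividing $|G|$, observe that every spherical system must contain a generator whose order is divisible by $p$ (you via the gcd of the exponents $a_j$, the paper via $\operatorname{lcm}(m_1^{(i)},\ldots,m_{r(i)}^{(i)})=|G|$), and exploit the uniqueness of subgroups of a given order in a cyclic group to place a common non-trivial element in every $\Sigma(T_i)$, contradicting the freeness criterion of Lemma~\ref{sigma}. Your direct exhibition of the order-$p$ subgroup $H_p\subseteq\bigcap_{i=1}^n\Sigma(T_i)$ is merely a streamlined packaging of the paper's intermediate claim that $\gcd(m_{j_1}^{(1)},\ldots,m_{j_n}^{(n)})=1$ for every choice of indices.
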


\begin{proof}
Suppose for a contradiction that $G=\langle g\rangle$ is cyclic of order $d$. By the assumption we have $n$ different systems $$T_1=(h^{(1)}_1,\ldots,h^{(1)}_{r(1)}),\,\, \ldots\,\,,\,\,T_n=(h^{(n)}_1,\ldots,h^{(n)}_{r(n)})
$$ of spherical generators of $G$ with respective orders $$
A_1=[m_1^{(1)},\ldots,m_{r(1)}^{(1)}],\,\, \ldots\,\,,\,\,A_n=[m_1^{(n)},\ldots,m_{r(n)}^{(n)}].
$$
For each $i\in\{1,\ldots,n\}$, since $T_i$ generates a cyclic group of order $d$ we have that $d=\text{lcm}(m_{1}^{(i)},\ldots,m_{r(i)}^{(i)})$. On the other hand the condition (\ref{sigmacondition}) implies $\text{gcd}(m_{j_1}^{(1)},\ldots,m_{j_n}^{(n)})=1$ for every choice of $j_1,\ldots j_n$. Indeed, otherwise there exists $t>1$ such that some powers of $h^{(1)}_{j_1}, \ldots, h^{(n)}_{j_n}$ will all have order $t$ and therefore generate the same subgroup of $G$. Then, there will be non-trivial powers of $h^{(k)}_{j_k}$ that coincide. This contradicts (\ref{sigmacondition}). Let $p$ be a prime divisor of $d$. Then for each $i\in\{1,\ldots,n\}$, from $\text{lcm}(m_{1}^{(i)},\ldots,m_{r(i)}^{(i)})=d$  we have $p\mid m_{k_i}^{(i)}$ for some $k_i$. But then $p\mid \text{gcd}(m_{k_1}^{(1)},\ldots,m_{k_n}^{(n)})$, a contradiction.
\end{proof}

Next we find that the genus of every factor $C_i$ is greater than $2$. 

\begin{prop}
	\label{genus>2}
Let $n\geq 2$, and suppose $X=(C_1\times\cdots\times C_n)/G\in \mathcal S_0((-2)^n,n)$.  Then, for each $C_i$ the genus satisfies $g(C_i)>2$.
\end{prop}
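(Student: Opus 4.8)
The plan is to argue by contradiction. Suppose some factor has genus $2$; after relabelling assume $g(C_1)=2$, and write $a_j=g(C_j)-1$ for brevity, so that $a_1=1$. By Lemma~\ref{Theta+alpha}(iii) together with Corollary~\ref{orderG} we then have $\alpha(A_1)=\prod_{j\ge 2}a_j=|G|/a_1=|G|$. Since $\alpha(A_1)$ must be one of the finitely many values tabulated in Corollary~\ref{Listoforders}, this already forces $|G|=\alpha(A_1)\le 84$ and pins $|G|$ down to the explicit finite set of $\alpha$-values occurring there. Note that genus $2$ is \emph{equivalent} to $a_1=1$, hence to $\alpha(A_1)=|G|$, by Lemma~\ref{Theta+alpha}; so the entire difficulty is to exclude this single numerical coincidence.

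Next I would cut the list of candidates down by purely numerical means. By Corollary~\ref{cor:alpha-1} at least two indices $k\neq 1$ satisfy $a_k\ge 2$, so $|G|=\prod_{j\ge 2}a_j$ is a product of at least two integers $\ge 2$. Moreover, for every factor $C_k$ with $k\ge 2$ the number $\alpha(A_k)=|G|/a_k$ must itself appear in the table of Corollary~\ref{Listoforders}, and by Lemma~\ref{upper_bound_on_m} every entry $m_j^{(k)}$ of $A_k$ must divide $|G|/a_k$. Requiring that $|G|$ and all the cofactors $|G|/a_k$ simultaneously lie in the list of admissible $\alpha$-values is already quite restrictive; for instance it rules out $|G|=84$ outright, since $84$ cannot be written as a product of admissible $\alpha$-values. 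Carrying this out leaves only finitely many tuples $(|G|;\,a_2,\dots,a_n;\,A_1,\dots,A_n)$ to examine.

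For each surviving value of $|G|$ I would then enumerate the finitely many groups $G$ of that order, discarding the cyclic ones by Lemma~\ref{notcyclic} and any $G$ that fails to contain elements of all the orders prescribed by the types $A_i$. For each remaining datum $(G,T_1,\dots,T_n)$ the decisive test is the freeness criterion of Lemma~\ref{sigma}, namely $\bigcap_{i}\Sigma(T_i)=\{1_G\}$. Here the genus-$2$ factor is the pressure point: because $A_1$ consists of few generators of small order that must nonetheless generate a group as large as $\alpha(A_1)=|G|$, the set $\Sigma(T_1)$ of elements having a fixed point on $C_1$ is forced to be very large — frequently all of $G$ — so the burden of shrinking the intersection to the identity falls entirely on the remaining factors. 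A counting argument on how many non-identity elements each $\Sigma(T_k)$ can omit then shows the intersection can never be made trivial, contradicting freeness. This closes every case and yields $a_i\ge 2$, that is $g(C_i)>2$, for all $i$.

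The main obstacle is this last step: controlling $\Sigma(T_i)=\bigcup_{j,k,g}\{g(h_j^{(i)})^k g^{-1}\}$, the union of conjugates of powers of the spherical generators, for each of the small groups that survive the numerical sieve, since this depends on their conjugacy structure rather than on the numerics alone. The bookkeeping is extensive, which is presumably why a computer check is invoked; the conceptual content, however, is that a genus-$2$ factor pins $|G|$ to a small bounded value and then renders the freeness condition unsatisfiable.
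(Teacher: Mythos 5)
Your numerical sieve is sound and runs parallel to the paper's opening moves: $g(C_1)=2$ forces $\alpha(A_1)=\prod_{j\ge 2}(g(C_j)-1)=|G|$ via Lemma~\ref{Theta+alpha}(iii) and Corollary~\ref{orderG}, so $|G|$ is confined to the $\alpha$-values of Corollary~\ref{Listoforders}, and compatibility of the cofactors $|G|/(g(C_k)-1)$ with that table prunes further (your exclusion of $84$ is correct). The paper gets its short list by a different and sharper route: since $G$ acts absolutely faithfully it embeds into $\aut(C_1)$, and Broughton's classification of group actions on genus-$2$ surfaces \cite[Table 4]{Broughton91}, combined with Lemma~\ref{notcyclic}, leaves exactly eleven explicit groups. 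Your sieve avoids citing Broughton but at a price: for each surviving order you would have to determine by hand which groups admit a spherical generating system of one of the few types with $\alpha(A_1)=|G|$ (e.g.\ all $52$ groups of order $48$ against the type $[2,3,8]$), which amounts to reconstructing the relevant slice of Broughton's table.

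The genuine gap is the final step. You dispose of every surviving case with the assertion that ``a counting argument on how many non-identity elements each $\Sigma(T_k)$ can omit shows the intersection can never be made trivial,'' but no such argument is given, and no uniform counting argument can exist: the set $\mathcal S_0(4,2)$ is nonempty by Bauer--Catanese--Grunewald, so condition~\eqref{sigmacondition} \emph{is} satisfiable for data of exactly this shape once all genera exceed $2$. Any argument that kills precisely the genus-$2$ configurations must therefore exploit the specific structure of the individual groups rather than cardinality bounds on the sets $\Sigma(T_i)$; likewise your claim that $\Sigma(T_1)$ is ``frequently all of $G$'' is unsubstantiated. This group-by-group analysis is exactly where the bulk of the paper's proof lies: for each of the eleven candidates it finds a concrete obstruction --- a central involution forced into every $\Sigma(T_i)$ (as for $Q_8$, $\SL_2(3)$, $D_{2,8,3}$, and the order-$4$ squares in $\Gl_2(3)$), the conjugacy-class structure of involutions in $D_4$, $D_6$ and $Z_3\rtimes D_4$, or the outright nonexistence of generating systems of the required types. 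Finally, note that the paper's proof is carried out by hand (the computer mentioned in the acknowledgements only confirmed results), so deferring the decisive step to a computer check does not close the gap: as written, your proposal is a plausible plan whose essential case analysis is announced rather than executed.
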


\begin{proof}
From Corollary \ref{cor:alpha-1} we know that not all $C_i$ have genus $g(C_i)=2$. Now we assume that one of the $C_i$, say $C_1$, is of genus $2$. As $G$ is absolutely faithful it embeds into $\text{Aut}(C_1)$. The automorphism groups of surfaces of genus $2$ are all classified by the work of Broughton~\cite{Broughton91}. From Lemma~\ref{notcyclic} and the classification  result \cite[Table 4]{Broughton91} we then know that $G$ is one of the following groups:
\[
Z_2\times Z_2, \,D_3, \,Q_8,\, D_4, \,Z_2\times Z_6, \,D_{4,3,-1}, \,D_6,\,D_{2,8,3}, \,Z_3\rtimes D_4, \,\text{SL}_2(3),\, \text{GL}_2(3),
\]   
where we denote the cyclic group of order $d$ by $Z_d$, the dihedral group of order $2m$ by $D_m$, the quaternion group of order $8$ by $Q_8$, and  $D_{p,q,r}=\langle x,y\mid x^p=y^q=1, xyx^{-1}=y^r\rangle$. We remark that  for $Z_3\rtimes D_4$ the kernel of the semidirect action of $D_4$ on $Z_3$ is isomorphic to $Z_2\times Z_2$.

By our initial observation, we know that $G$ must act on at least one surface $C_n$ of genus greater than $2$ and such that $|G|=\prod_{i=1}^n (g(C_i)-1)\geq g(C_n)-1$. We consider now the possible cases.

\medskip

\noindent\underline{Case (a):} $G=Z_2\times Z_2$. Now
Corollaries~\ref{orderG} and \ref{cor:alpha-1} imply that the vector of genera $(g(C_1),\ldots,g(C_n))$ is of the form $(2,\ldots,2,3,3)$. Referring to  Lemma~\ref{Theta+alpha}(i) and Corollary~\ref{Listoforders}, we deduce that the Klein 4-group~$G$ can act on a genus $2$ surface where the quotient is $\PP_1(\mathbb{C})$ only with ramification type $[2,2,2,2,2]$ and respectively on a genus 3 surface with ramification type $[2,2,2,2,2,2]$. In any case, every possible system of spherical generators must contain all elements of $G$, so that the condition (\ref{sigmacondition}) is never satisfied. 

\medskip

\noindent\underline{Case (b)}: $|G|=12$, i.e. $G\in\{Z_2\times Z_6, D_6,D_{4,3,-1}\}$. From Corollaries~\ref{orderG} and \ref{cor:alpha-1}  the vector of genera $(g(C_1),\ldots,g(C_n))$ is either $(2,\ldots,2,3,7)$, $(2,\ldots,2,3,3,4)$ or $(2,\ldots,2,4,5)$. Let $T$ be a spherical system of generators of $G$ that defines an action of~$G$ on a genus $g$ surface with quotient of genus zero and $A$ the ramification type of  $T$. By  Lemma~\ref{Theta+alpha}(i), we have $(g-1)=6\Theta(A)$, or equivalently $(g-1)=12/\alpha(A)$. 

Assuming $g=7$ we get $\alpha(A)=2$ and Corollary \ref{Listoforders} gives $[2,2,2,2,2,2]$ as the only possibility. But it is clear that $G\ne D_6$ cannot have a system of generators with these orders; indeed, all elements of order~2 are central. For $G=D_6$, we instead consider the action on a surface of genus 2. Here $\alpha(A)=12$ and $A\in\{[2,2,2,3],[2,6,6],[3,3,6]\}$; indeed, the group~$D_6$ has no element of order 4. Writing $D_6=\langle x,y\mid x^6=y^2=1,\,yxy=x^{-1}\rangle$, we readily see that a generating set cannot have type $[3,3,6]$ as the only elements of order divisible by~3 are all powers of $x$. Similarly, in a generating system of type $[2,6,6]$  no combination of the generators can be multiplied together to give the identity element. For type $[2,2,2,3]$, note that the only elements of order 3 are $x^2$ and $x^4$, the only central non-trivial element is $x^3$ which is of order 2, and the remaining elements of order 2 are 
\begin{equation}\label{eq:ord-2}
y,\,xy,\,x^2y,\,x^3y,\,x^4y,\,x^5y.
\end{equation}
If all three order-2 generators in the system of type $[2,2,2,3]$ contain a $y$, then any product of all generators of this system is non-trivial, since it is a word containing an odd number of $y$'s. So one order-2 generator is the central element $x^3$. Note that not all  order-2 generators can be $x^3$, so the remaining two are of the form listed in \eqref{eq:ord-2}. Since $x^3$ is central, when ensuring that the four generators can be multiplied together in some order to equal the identity, we see that the total exponent in $x$ among the two generators from \eqref{eq:ord-2} must be odd. So the possibilities are $\{x^{2k}y,x^{2\ell +1}y\}$, for some $k,\ell\in\{0,1,2\}$.

Consider a surface of genus 3. In light of what we have mentioned above, the only possible ramification types  to consider are $[2,2,2,6]$ and $[2,2,3,3]$. We claim that the latter is not possible. Indeed, for  $[2,2,3,3]$, it follows that both elements of order 2 must come from \eqref{eq:ord-2}. In order to form a spherical generating set, in both these elements the exponent of $x$ must be odd. However we can never generate $y$ from such a set of elements. For $[2,2,2,6]$, one of the elements of order 2 must be $x^3$, as otherwise the total exponent of $y$ in any product of the 4 generators will be odd. It follows that for the remaining two elements of order~2, they are from \eqref{eq:ord-2} and either both $x$-exponents are even, or both are odd. We now compare all the above with the one of type $[2,2,2,2,2,2]$ associated to a surface of genus 7. Likewise, not all generators here can be $y$ or $x^3$, so one is of the form $x^k y$, with $k\ne 0$. Without loss of generality, we may assume that none of the generators is $x^3$, as then condition (\ref{sigmacondition}) does not hold. Recall that there are two conjugacy classes for the non-central elements of order~2. They are $\{y,x^2y,x^4y\}$ and $\{xy,x^3y,x^5y\}$. Observe that, under our current assumptions, in the generating system of  type $[2,2,2,2,2,2]$, not all generators can come from the same conjugacy class, as otherwise it will not be a generating set for $D_6$; indeed, the element $x$ cannot be generated.  Hence we see from the above that condition (\ref{sigmacondition}) does not hold, and hence the only possibilities for the vector of genera are $(2,\ldots,2,3,3,4)$ or   $(2,\ldots,2,4,5)$.

So we now return to the general case, with no  further assumption  on $G$. Let us next assume $g=5$. Then we have $\alpha(A)=3$ and hence $A=[3,3,3,3]$. As a direct check shows that all elements of order~3 in~$G$ generate a proper subgroup, there is no action of $G$ on a surface of genus~5. 

Finally, we consider the remaining case  $(2,\ldots,2,3,3,4)$ for the vector of genera. First assume $G\ne D_6$ and suppose now that $g=4$. Then we have $\alpha(A)=4$ and hence $A=[2,2,2,2,2]$ or $A=[2,2,4,4]$. As indicated above, the first type is not possible, so we consider the second type  $A=[2,2,4,4]$. Clearly this is not possible for $G=\ZZ_2\times \ZZ_6$. For $G=D_{4,3,-1}=\langle x,y\mid x^4=y^3=1,\,xyx^{-1}=y^{-1}\rangle$, the only element of order~2 is~$x^2$ which is central, and the elements of order~4 are
\[
x,\,x^{-1},\,xy,\,xy^{-1},\,x^{-1}y,\,x^{-1}y^{-1}.
\]
Since the product of all elements in the system is the identity, this surmounts to the product of the two order-4 elements being trivial. Hence we cannot get a generating set. Now we only need to consider $G=D_6$. Consider a surface of genus 3. From the above, we have only the ramification type $[2,2,2,6]$, where one of the elements of order 2 must be $x^3$, and for the remaining two elements of order~2, they are from \eqref{eq:ord-2} and either both $x$-exponents are even, or both are odd. For genus $g=4$, we can only have $A=[2,2,2,2,2]$, and here equally one generator has to be $x^3$, as otherwise the total  exponent of $y$ in any product of the 5 generators will be odd. Hence (\ref{sigmacondition}) does not hold, so this final case cannot occur.

\medskip
 
\noindent\underline{Case (c):} $G=D_3$. In this case we can argue as in Case (b), and we only need to consider the vector $(2,\ldots,2,3,4)$ of genera. If the genus is 3, then $\alpha(A)=3$ and $A=[3,3,3,3]$. As $D_3$ cannot be generated by elements of order~3, there is no action on a surface of genus~3. Hence we are done in this case.

\medskip
 
\noindent\underline{Case (d):} $|G|=8$; i.e. $G=Q_8$ or $G=D_4$.  The only vectors of genera to be considered are  $(2,\ldots,2,3,5)$ or $(2,\ldots,2,3,3,3)$. As $Q_8$ has only one element of order~$2$, which is central, condition \eqref{sigmacondition} does not hold, as in every possible ramification type there is an element of even order. So we now set $G=D_4$. For genus 3, we have $\alpha(A)=4$, and so $A=[2,2,2,2,2]$ or $[2,2,4,4]$. Similarly, for genus 2, we have $A=[2,2,2,4]$ (as no product of 3 elements of order 4 can be trivial), and for genus 5, we obtain $A=[2,2,2,2,2,2]$. Note also that for $[2,2,2,2,2]$, at least one generator is central, as otherwise the total $y$-exponent in any product of the 5 generators is odd. Since the square of an element of order 4 is central, we see that the vector $(2,\ldots,2,3,3,3)$ of genera does not satisfy condition (\ref{sigmacondition}). So for the remaining case, it suffices to assume that in a generating system of type  $[2,2,2,2,2,2]$, none of the generators are central; i.e. every generator is conjugate to $y$ or to $xy$  since there are only two conjugacy classes of non-central elements of order 2 in $D_4$. Furthermore, at least one generator is conjugate to~$y$ and another generator is conjugate to~$xy$, otherwise the system would not generate the group. We claim that  condition (\ref{sigmacondition}) is not satisfied. Indeed, for the generating system of type $[2,2,4,4]$, for the product of the generators, taken in some order to be trivial, the system must be of the corresponding form $[x^{2k+m}y,x^{2\ell +m}y,x^\epsilon,x^\delta]$ for some $k,\ell,m\in\{0,1\}$ and $\epsilon,\delta\in\{\pm 1\}$. Similarly, for genus~2, we have that the corresponding system is of the form $[x^{2k+m}y,x^{2\ell +(m+1)}y,x^{2},x^\epsilon]$ for some $k,\ell,m\in\{0,1\}$ and $\epsilon\in\{\pm 1\}$.
Finally returning to the system of type $[2,2,2,2,2]$, note that in order to be a generating system at least one generator is conjugate to~$y$ and another generator is conjugate to~$xy$.  Hence the result follows.

\medskip
 
\noindent\underline{Case (e):} $G= D_{2,8,3}$, the semidihedral group of order $16$. The  vectors of genera to consider here are $(2,\ldots,2,3,9)$, $(2,\ldots,2,5,5)$, $(2,\ldots,2,3,3,5)$ or $(2,\ldots,2,3,3,3,3)$. This leads to type $A=[2,2,2,2,2,2]$ for genus 9, to the types $A\in\{[2,2,2,2,2], [2,2,4,4]\}$   for genus~5, to $A\in\{[2,2,2,4],[2,8,8],[4,4,4]\}$ for genus 3, and $A=[2,4,8]$ for genus 2. Also $G$ cannot be generated by only elements of order $2$, as the only non-central order-2 elements are
\[
x,\,xy^2,\,xy^4,\,xy^6,
\]
and the only central element of order~2 is~$y^4$. So for all remaining options, for any such system of generators, there is a generator of order at least 4. The square of any order-4 generator will be $y^4$, so that the condition (\ref{sigmacondition}) is always violated. 

\medskip
 
\noindent\underline{Case (f):} $|G|=24$; i.e. $G= Z_3\rtimes D_4$ or $G=\text{SL}_2(3)$.  Analogous to the above, the possible genera $(g(C_1),\ldots,g(C_n))$ to be considered are 

\[
(2,\ldots,2,3,13), \,(2,\ldots,2,4,9),\,(2,\ldots,2,5,7),
\]
and
\[
(2,\ldots,2,3,4,5),\,(2,\ldots,2,3,3,7), \,(2,\ldots,2,3,3,3,4).
\]

Since $G$ acts on the surface $C_1$ of genus 2, we have $\alpha(A_1)=24$ and $A_1\in\{[2,4,6],[3,3,4]\}$; indeed, the group $G$ has no element of order 12.

First let $G=Z_3\rtimes D_4$.  We write
\[
G=\langle x,y,z,w\mid x^2=y^2=z^2=w^3=[x,y]=[y,z]=[y,w]=[z,w]=1,\,z^x=zy,\,w^x=w^2\rangle.
\]
For convenience, we list the non-trivial conjugacy classes  of $G$ below:
 \begin{align*}
     \text{ccl}(x)&=\{x,xy,xw,xw^2,xyw,xyw^2\}\\
     \text{ccl}(y)&=\{y\}\\
     \text{ccl}(z)&=\{z,yz\}\\
     \text{ccl}(w)&=\{w,w^2\}\\
     \text{ccl}(xz)&=\{xz,zx,xzw,xzw^2,zxw,zxw^2\}\qquad\text{(order 4)}\\
     \text{ccl}(yw)&=\{yw,yw^2\}\\
     \text{ccl}(zw)&=\{zw,yzw^2\}\\
      \text{ccl}(zw^2)&=\{zw^2,yzw\}
 \end{align*}
 Now by Broughton's classification \cite[Table 5]{Broughton91}, we note that $G$ does not act on a surface of genus 3. Hence here we only need to consider the two vectors of genera $(2,\ldots,2,4,9)$ and $(2,\ldots,2,5,7)$. We begin by observing that if $g=9$ then $A=[3,3,3,3]$. As $[3,3,3,3]$ can never correspond to a generating set for $G$, there is no action on a surface of genus~9. So to settle this case where $G=Z_3\rtimes D_4$, we are left to consider the  genera vector $(2,\ldots,2,5,7)$. We show that \eqref{sigmacondition} is never satisfied by showing that there is always a generator conjugate to~$x$.  Suppose  $G$ acts on a surface of genus 5. We obtain  $A\in\{[2,2,2,6], [2,2,3,3],[3,3,6] \}$. A direct check shows that no generating set can be of type $[3,3,6]$, since $x$ cannot be generated.  Next we observe  that  a generating system of type $[2,2,2,6]$ must have a generator conjugate to~$x$, and similarly for $[2,2,3,3]$.  Next we consider an action on a surface of genus 7. Here $A\in\{[2,2,2,2,2], [2,2,4,4] \}$.  For the type $[2,2,2,2,2]$ it is clear that one generator is conjugate to~$x$. For $[2,2,4,4]$, this is also the case, as if not, either both generators of order 2 are $y$, or both are conjugate to~$z$. In either case, we cannot generate the whole group with such a system. For $g=2$, the only possible option for $A$ to correspond to a generating set is $[2,4,6]$. As the total exponent of $x$ in any word representing a given element of~$G$ is constant, and the  total exponent in an element of order~6 is~0, it follows that the generator of order~2 is conjugate to~$x$.  So \eqref{sigmacondition} does not hold. This finishes the proof for $G=Z_3\rtimes D_4$.

So it remains to consider the group $G=\text{SL}_2(3)$. Recall that there is only one element of order 2 in $\text{SL}_2(3)$, which is central. Hence we deduce that condition (\ref{sigmacondition}) does not hold for all genera vectors, except possibly the vector 
$(2,\ldots,2,4,9)$ since for $g=9$, as there is a possible ramification type with no even element orders, namely $[3,3,3,3]$. Then looking at the ramification types for $g=4$, we only need to consider $A=[4,4,4]$, as the other options either clearly do not give rise to a generating set for the group. However as $\text{SL}_2(3)$ has only one Sylow 2-subgroup, it is now clear that the type $[4,4,4]$ cannot occur.

\medskip
 
\noindent\underline{Case (g):} $G=\text{GL}_2(3)$, which is of order 48. The possible genera $(g(C_1),\ldots,g(C_n))$ that we need to consider are \[
(2,\ldots,2,3,25), \,(2,\ldots,2,4,17),\,(2,\ldots,2,5,13),(2,\ldots,2,3,3,13),\, (2,\ldots,2,7,9),
\]
\[
(2,\ldots,2,3,5,7),\,(2,\ldots,2,3,3,3,7), \,(2,\ldots,2,3,4,9), \,(2,\ldots,2,3,3,4,5), 
\]
\[
 (2,\ldots,2,4,5,5),\,(2,\ldots,2,3,3,3,3,4).
\]
We proceed as before with our analysis on a case-by-case basis. First we collect together some facts about $G$:
\begin{enumerate}
\item [$\bullet$] $G$ has no element of order 12;
    \item [$\bullet$] $G$ has only one conjugacy class of non-central elements of order 2;
           \item [$\bullet$] all elements of order 3, 4 and 6 lie in the subgroup $\text{SL}_2(3)$ of $G$.
\end{enumerate}

From the above facts, we have the following possible ramification types listed according to the genus of the surface:

\begin{center}
\begin{tabular}{|c||c|}
		\hline
	Genus of the surface	 & Possible ramification types\\
		\hline
		$g=2$ & $[2,3,8]$\\
		\hline
  	$g=3$ & $[2,4,6]$\\
		\hline
  	$g=4$ & $[2,4,8]$\\
		\hline
  	$g=5$ & $[2,2,2,3]$, $[2,6,6]$\\
		\hline
  	$g=7$ & $[2,2,2,4]$, $[2,8,8]$\\
		\hline
  $g=9$ & $[2,2,2,6]$, $[2,2,3,3]$\\
		\hline
  $g=13$ & $[2,2,2,2,2]$, $[2,2,4,4]$  \\
		\hline
  $g=17$ & no possible type\\
		\hline
  $g=25$ & $[2,2,2,2,2,2]$\\
		\hline
	\end{tabular}
\end{center}

\medskip

Thus, the genera vector $(2,\ldots,2,4,17)$ is not possible. Condition (\ref{sigmacondition}) is never satisfied for all the possible vectors of genera because in each system of generators of the types listed above, there is at least one non-central generator of order 2. Indeed, otherwise either all elements lie in $\text{SL}_2(3)$, or the system is not a spherical generating system. In particular, for the type $[2,3,8]$, if the generator of order 2 is central, then the product of the generator of order 2 and the generator of order 3 yields an element of  $\text{SL}_2(3)$, so it cannot be an element of order 8 since $\text{SL}_2(3)$ has no element of order~8. For the type $[2,8,8]$, again if the generator of order 2 is central, then all three generators lie in a cyclic subgroup of order~8.  
\end{proof}
 
\begin{cor}
	\label{nleq6}
Let $G$ be a finite group that acts absolutely faithfully on a product $C_1\times\cdots\times C_n$ such that $X=(C_1\times\cdots\times C_n)/G\in \mathcal S_0((-2)^n,n)$. Then $n\leq 6$.
\end{cor}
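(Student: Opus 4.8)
The plan is to sandwich $n$ between an exponential lower bound and the explicit finite list of admissible $\alpha$-values, and then to dispose of the single borderline case by a divisibility count.

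First I would invoke Proposition~\ref{genus>2}, which guarantees $g(C_i)\geq 3$, hence $g(C_i)-1\geq 2$ for every $i$. Feeding this into Lemma~\ref{Theta+alpha}(iii), for each fixed $i$ one obtains
\[
\alpha(A_i)=\prod_{j\neq i}(g(C_j)-1)\geq 2^{\,n-1},
\]
since the product runs over the $n-1$ indices $j\neq i$, each contributing a factor at least $2$. On the other hand, Corollary~\ref{Listoforders} lists all admissible ramification types together with their $\alpha$-values, and the largest among them is $84$ (attained only by the type $[2,3,7]$). Comparing the two bounds gives $2^{\,n-1}\leq 84$, whence $n-1\leq 6$ and so $n\leq 7$.

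It then remains to exclude $n=7$, which I expect to be the only genuine obstacle, since the crude bound above leaves it open. Suppose $n=7$. Then for every $i$ we have $\alpha(A_i)\geq 2^{6}=64$, and the only value in the list of Corollary~\ref{Listoforders} that is at least $64$ is $84$. Hence $\alpha(A_i)=84=2^{2}\cdot 3\cdot 7$ for all $i$. But $\alpha(A_i)=\prod_{j\neq i}(g(C_j)-1)$ is a product of six integers, each at least $2$, and therefore has at least six prime factors counted with multiplicity; since $84$ has only four, this is impossible. This contradiction rules out $n=7$ and leaves $n\leq 6$, as required.

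The essential point is thus the interplay between the lower bound $2^{\,n-1}$, forced by Proposition~\ref{genus>2}, and the upper cutoff $84$ from the finite classification in Corollary~\ref{Listoforders}; the only step requiring more than the exponential estimate is $n=7$, where the finer arithmetic remark that $84$ has too few prime factors to be a product of six integers each exceeding one is what closes the argument.
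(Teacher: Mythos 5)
Your proof is correct, and it follows the same broad strategy as the paper---trapping $n$ by playing an exponential lower bound against the cutoff $84$ from Corollary~\ref{Listoforders}, then eliminating the borderline case $n=7$---but both steps are executed differently, and your execution is slightly leaner. For the main bound, the paper aggregates over all factors: from $|G|=\alpha(A_i)(g(C_i)-1)$ it derives $|G|^{n-1}=\prod_{i=1}^n\alpha(A_i)\le 84^n$ and combines this with $|G|\ge 2^n$ (via Proposition~\ref{genus>2}) to get $2\le 84^{1/(n-1)}$; you instead bound a single $\alpha(A_i)=\prod_{j\ne i}(g(C_j)-1)\ge 2^{n-1}$ directly from Lemma~\ref{Theta+alpha}(iii), which yields the equivalent inequality $2^{n-1}\le 84$ with less bookkeeping. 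For $n=7$, the paper first forces $g(C_i)=3$ for all $i$ via the numerical squeeze $192=2^6\cdot 3\le |G|\le 84^{7/6}\approx 175.7$, concludes $|G|=2^7$, and then observes that the required ramification type with $\alpha(A)=2^6=64$ does not occur in the table; you instead note that each $\alpha(A_i)\ge 64$ forces $\alpha(A_i)=84$, the unique admissible value at least $64$, and that $84=2^2\cdot 3\cdot 7$ has only four prime factors counted with multiplicity, so it cannot be a product of six integers each at least $2$. This last step is self-contained arithmetic and avoids both the decimal estimate and a second consultation of the table, so your route buys a modest simplification, while the paper's identity $|G|^{n-1}=\prod_i\alpha(A_i)$ has the side benefit of producing the explicit order bounds on $|G|$ that are reused in Proposition~\ref{prop:n=6} and throughout Section~\ref{sec:5}.
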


\begin{proof}
Let $T_1,\ldots,T_n$ be the different spherical systems of generators of $G$ defining the structure of a variety isogenous to a higher product on $X$ and let $A_1,\ldots,A_n$ be the tuples 
of orders of the corresponding elements. By Corollary \ref{orderG} and Lemma \ref{Theta+alpha} we have  
\[
|G|=\prod_{i=1}^n (g(C_i)-1)=\alpha(A_1)(g(C_1)-1)=\cdots=\alpha(A_n)(g(C_n)-1).
\]
It follows that
\[
|G|^n=\alpha(A_1)\cdots\alpha(A_n)\prod_{i=1}^n(g(C_i)-1)=|G|\prod_{i=1}^n\alpha(A_i).
\]
Equivalently 
\[
|G|^{n-1}=\prod_{i=1}^n\alpha(A_i).
\]
For each $i\in\{1,\ldots,n\}$ we have $\alpha(A_i)\leq 84$ (see Corollary \ref{Listoforders}). This implies $|G|^{n-1}\leq 84^{n}$, i.e. $|G|\leq 84^{n/(n-1)}$. On the other hand, as $g(C_i)>2$ for every $i\in\{1,\ldots,n\}$ from Proposition~\ref{genus>2}, we have $|G|=\prod_{i=1}^n(g(C_i)-1)\geq 2^n$. Altogether we obtain $2<84^{1/(n-1)}$. The last expression is decreasing in $n$ and we have $84^{1/7}=1.88\ldots$. This proves that the dimension of $X$ is less or equal to $7$. In the case $n=7$ we must have $g(C_i)=3$ for all $i\in\{1,\ldots,7\}$. Otherwise we would have $192=2^6\cdot 3\leq |G|\leq 84^{7/6}=175.7\ldots$. However there is no group of order $2^7=128$ that will satisfy Corollary \ref{Listoforders}. Indeed, if this were the case we would find a spherical system of generators $T$ such that the corresponding ramification type $A$ satisfies $\alpha(A)=2^6=64$. But there is no such type in the table of Corollary \ref{Listoforders}.   
\end{proof}

\begin{prop}\label{prop:n=6}
The set $\mathcal S_0(2^6,6)$ is empty.
\end{prop}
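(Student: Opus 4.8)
The plan is to reach a contradiction from a purely numerical obstruction, so that---in contrast to the group-by-group analysis of Proposition~\ref{genus>2}---no detailed study of the structure of $G$ is required. Suppose for contradiction that $X=(C_1\times\cdots\times C_6)/G\in\mathcal S_0(2^6,6)$, and write $a_i=g(C_i)-1$ for $i\in\{1,\ldots,6\}$. By Proposition~\ref{genus>2} we have $a_i\geq 2$ for every $i$, while Lemma~\ref{Theta+alpha}(iii) gives $\alpha(A_i)=\prod_{j\neq i}a_j$. As this is a product of five integers each at least $2$, we obtain $\alpha(A_i)\geq 2^5=32$ for all $i$.

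First I would scan the list in Corollary~\ref{Listoforders} and note that the only values of $\alpha(A_i)$ appearing there that are at least $32$ are $36$, $40$, $48$ and $84$, corresponding respectively to the ramification types $[2,3,9]$, $[2,4,5]$, $[2,3,8]$ and $[2,3,7]$ (all with $r(i)=3$). Hence each $\alpha(A_i)$ lies in $\{36,40,48,84\}$. On the other hand, exactly as in the proof of Corollary~\ref{nleq6}, the relation $\alpha(A_i)=|G|/a_i$ gives, upon taking the product over $i$, the identity $|G|^5=\prod_{i=1}^6\alpha(A_i)$; in particular $\prod_{i=1}^6\alpha(A_i)$ must be a perfect fifth power.

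The heart of the argument is then a short elementary computation. Writing $\prod_{i=1}^6\alpha(A_i)=36^{a}\,40^{b}\,48^{c}\,84^{d}$ with $a+b+c+d=6$ and using the factorisations $36=2^2\,3^2$, $40=2^3\,5$, $48=2^4\,3$ and $84=2^2\,3\cdot 7$, I would read off the $2$-, $3$-, $5$- and $7$-adic valuations of $|G|^5$ and impose that each be divisible by $5$. The valuation at $5$ equals $b$ and the valuation at $7$ equals $d$, forcing $b,d\in\{0,5\}$; since $a+b+c+d=6$, the only surviving cases are $(b,d)=(0,0)$ with $a+c=6$, and $(b,d)\in\{(5,0),(0,5)\}$ with $a+c=1$. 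In each of these finitely many cases one checks that the $2$-adic valuation $2a+3b+4c+2d$ is not divisible by $5$, which is the desired contradiction; hence $\mathcal S_0(2^6,6)=\varnothing$.

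I expect the main obstacle to be only bookkeeping: extracting from Corollary~\ref{Listoforders} the correct short list of large $\alpha$-values, and arranging the divisibility conditions so that the $5$- and $7$-adic valuations immediately collapse the problem to a handful of cases. Once the fifth-power constraint is in force, the remaining verification is an entirely routine finite check.
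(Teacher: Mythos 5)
Your route is genuinely different from the paper's and, in outline, it works. The paper bounds $2^6\le |G|\le 84^{6/5}\approx 203$, observes that some factor must have genus $3$, and then invokes Broughton's classification \cite{Broughton91} of group actions on genus-$3$ surfaces to reduce to the two groups $(Z_4\times Z_4)\rtimes S_3$ and $\Gl_3(2)$, which are excluded by ad hoc arguments. You instead exploit that $g(C_i)>2$ (Proposition~\ref{genus>2}) forces $\alpha(A_i)=\prod_{j\ne i}(g(C_j)-1)\ge 2^5=32$, so that by Corollary~\ref{Listoforders} each $\alpha(A_i)\in\{36,40,48,84\}$, and then use the identity $|G|^5=\prod_{i=1}^6\alpha(A_i)$ from the proof of Corollary~\ref{nleq6} to demand a perfect fifth power. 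This is purely arithmetic, avoids Broughton's tables entirely, and is arguably cleaner; the extraction of the list $\{36,40,48,84\}$ and the valuations $v_2=2a+3b+4c+2d$, $v_3=2a+c+d$, $v_5=b$, $v_7=d$ are all correct.

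However, your final verification as stated is wrong in one case. In the case $(b,d)=(0,0)$ with $a+c=6$, take $(a,c)=(2,4)$: then $\prod_i\alpha(A_i)=36^2\cdot 48^4=2^{20}\cdot 3^8$, and the $2$-adic valuation is $20$, which \emph{is} divisible by $5$. So your claim that the $2$-adic valuation fails in every surviving case is false; this case is only excluded by the $3$-adic valuation $2a+c=8\not\equiv 0 \pmod 5$. The repair is immediate: for $(b,d)=(0,0)$, requiring $5\mid 2a+4c$ and $5\mid 2a+c$ simultaneously gives $5\mid 3c$, hence $c\in\{0,5\}$, and then $(a,c)=(6,0)$ gives $v_2=12$ and $(a,c)=(1,5)$ gives $v_2=22$, neither divisible by $5$; in the cases $(b,d)=(5,0)$ and $(0,5)$ with $a+c=1$ your $2$-adic check does go through ($v_2\in\{17,19\}$ and $\{12,14\}$ respectively). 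With the $3$-adic condition added, the finite check is complete and your proof is valid.
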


\begin{proof}
Let $X=(C_1\times\cdots\times C_6)/G$ be in $\mathcal S_0(2^6,6)$. From the proof of Corollary \ref{nleq6} we have $2^6\leq |G|\leq 84^{6/5}\approx 203$. We also note that at least one factor $C_i$ has genus $3$, as otherwise $|G|=\prod_{i=1}^6 (g(C_i)-1)\geq 3^6=729>203.$ The classification in \cite[Table 5]{Broughton91} gives only two such groups, namely $G=(Z_4\times Z_4)\rtimes S_3$ of order $96$ and $G=\text{GL}_3(2)$ of order $168$. Note that $96=2^5\cdot 3$, so if $G=(Z_4\times Z_4)\rtimes S_3$ defines an element of $\mathcal S_0(2^6,6)$, then there must be an action of $G$ on a surface of genus $4$ with quotient of genus zero. However there is no such action, since it must correspond with a ramification type~$A$ with $\alpha(A)=32$ that does not appear in Corollary \ref{Listoforders}. For the other candidate $G=\text{GL}_3(2)$  note that the order $168=2^3\cdot 3\cdot 7$ cannot be written as a product of $6$ different numbers. 
\end{proof}

\begin{proof}
    [Proof of Theorem \ref{mainthm}]
    The first statement is just Proposition \ref{genus>2}. Then Corollary \ref{nleq6} and Proposition \ref{prop:n=6} give the final statement.
\end{proof}

\section{Possible orders for $G$} \label{sec:5}

For the remainder of the paper, the group $G$ will always be such that $(C_1\times\cdots\times C_n)/G\in \mathcal S_0((-2)^n,n)$, where from the previous section $n\in\{4,5\}$. This will form our standing assumptions in this last section. We first prove several lemmas which narrow down the possible orders for $G$. All these restrictions will help us prove Theorem~\ref{thm:main}, which we do at the very end. 

\begin{lemma}\label{lem:divisors-options}
The prime divisors of the order of $G$ form a subset of $\{2,3,5,7\}$.
\end{lemma}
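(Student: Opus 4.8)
The goal is to show that any prime $p$ dividing $|G|$ must lie in $\{2,3,5,7\}$. The plan is to exploit the key identity from Corollary~\ref{nleq6}, namely
\[
|G|^{n-1}=\prod_{i=1}^n\alpha(A_i),
\]
together with the very restrictive list of admissible $\alpha$-values recorded in Corollary~\ref{Listoforders}. Since every $\alpha(A_i)$ is a product of primes occurring in that list, and since the list only contains the values $\{2,3,4,5,6,8,10,12,15,16,18,20,24,30,36,40,48,84\}$, I would first observe that each $\alpha(A_i)$ is a $\{2,3,5,7\}$-number (its only prime divisors are $2,3,5,7$). Consequently the right-hand side of the identity is a $\{2,3,5,7\}$-number, and hence so is the left-hand side $|G|^{n-1}$. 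Because a prime $p$ divides $|G|^{n-1}$ if and only if it divides $|G|$, this immediately forces every prime divisor of $|G|$ into $\{2,3,5,7\}$.

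More directly, I would argue that if a prime $p$ divides $|G|$, then $p$ divides $|G|^{n-1}=\prod_{i=1}^n\alpha(A_i)$, so $p\mid\alpha(A_i)$ for at least one~$i$. By Lemma~\ref{upper_bound_on_m}, every $m_j^{(i)}$ divides $\alpha(A_i)$; but more usefully, the complete enumeration of possible ramification types in Corollary~\ref{Listoforders} lets me simply read off that the only primes ever dividing any listed value of $\alpha(A_i)$ are $2$, $3$, $5$ and $7$. Indeed the largest prime appearing is $7$, arising solely from the Hurwitz case $A_i=[2,3,7]$ with $\alpha(A_i)=84=2^2\cdot 3\cdot 7$; every other entry in the tables has prime divisors among $\{2,3,5\}$. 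This yields $p\in\{2,3,5,7\}$ as required.

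I do not expect any genuine obstacle here: the statement is an immediate consequence of the arithmetic already encoded in Corollary~\ref{Listoforders} combined with the factorisation identity from Corollary~\ref{nleq6}. The only point requiring a moment's care is the logical step that a prime divides $|G|$ precisely when it divides $|G|^{n-1}$, which holds for any $n\ge 2$ (and we are in the regime $n\in\{4,5\}$). Thus the proof reduces to inspecting the finite table of $\alpha$-values and noting that $84=2^2\cdot 3\cdot 7$ is the unique entry introducing the prime~$7$, while no entry introduces any prime larger than~$7$.
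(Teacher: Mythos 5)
Your proof is correct and follows essentially the same route as the paper: both arguments reduce to showing that any prime divisor of $|G|$ must divide some $\alpha(A_i)$ --- the paper does this directly from Corollary~\ref{orderG} and Lemma~\ref{Theta+alpha}(iii), whereas you pass through the equivalent identity $|G|^{n-1}=\prod_{i=1}^n\alpha(A_i)$, which is derived from those same two results in the proof of Corollary~\ref{nleq6} --- and then both conclude by inspecting the table of $\alpha$-values in Corollary~\ref{Listoforders}. One trivial slip: your enumeration of the possible $\alpha$-values omits $9$ (arising from $[3,3,9]$), but since $9=3^2$ this does not affect the conclusion.
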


\begin{proof}
    From Corollary~\ref{orderG} and Lemma~\ref{Theta+alpha}(iii), it follows that a prime divisor of $|G|$ has to be a prime divisor of some $\alpha(A_i)$. From Corollary~\ref{Listoforders}, the prime divisors of an $\alpha(A_i)$ can only be a subset of $\{2,3,5,7\}$.
\end{proof}

\begin{lemma}\label{lem:max-divisors}
The order of $G$ has at most 3 distinct prime divisors.
\end{lemma}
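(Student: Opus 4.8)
The plan is to argue by contradiction, ruling out the possibility that all four admissible primes $2,3,5,7$ simultaneously divide $|G|$. By Lemma~\ref{lem:divisors-options} these are the only primes that can occur, so if $G$ had four distinct prime divisors then $210=2\cdot 3\cdot 5\cdot 7$ would divide $|G|$. The whole argument then rests on pinning down $|G|$ exactly and exploiting the sharp constraint $\alpha(A_i)\le 84$ from Corollary~\ref{Listoforders}.

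First I would combine the divisibility $210\mid |G|$ with the size bound established in the proof of Corollary~\ref{nleq6}, namely $|G|\le 84^{\,n/(n-1)}$. For the remaining dimensions $n\in\{4,5\}$ this gives $|G|\le 84^{4/3}<368$, and the only multiple of $210$ below this bound is $210$ itself. Hence necessarily $|G|=210$.

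Next I would derive a contradiction from $|G|=210$ using Corollary~\ref{orderG}, which writes $|G|=\prod_{i=1}^n(g(C_i)-1)$ with each factor $g(C_i)-1\ge 2$ by Proposition~\ref{genus>2}. The decisive point is that $210$ is squarefree with exactly four prime factors. For $n=5$ one cannot distribute four prime factors among five factors each at least $2$, so this case is immediately impossible. For $n=4$ the pigeonhole principle forces the four factors to be exactly $2,3,5,7$; but then, by Lemma~\ref{Theta+alpha}(iii), the index $i$ with $g(C_i)-1=2$ satisfies $\alpha(A_i)=210/2=105$, contradicting the bound $\alpha(A_i)\le 84$ of Corollary~\ref{Listoforders}.

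The only delicate case is $n=4$, where $|G|=210$ lies genuinely within the permissible size range, so no crude counting suffices; I expect the main obstacle to be precisely this case, which is resolved by the arithmetic of the squarefree factorisation of $210$ together with the sharp inequality $\alpha(A_i)\le 84$.
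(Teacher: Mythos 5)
Your proof is correct, and it takes a genuinely different route from the paper's. The paper never pins down $|G|$; it argues purely by divisibility across the $\alpha(A_i)$'s: since no value in Corollary~\ref{Listoforders} is divisible by $5^2$ or $7^2$, each of the primes $5$ and $7$ divides exactly one factor $g(C_j)-1$ of $|G|$, hence divides $\alpha(A_i)=\prod_{j\neq i}(g(C_j)-1)$ for all indices except a single one ($j$ for $7$, $k$ for $5$); moreover the only admissible $\alpha$-value divisible by $7$ compatible with $n\geq 4$ is $84=2^2\cdot 3\cdot 7$, so $\alpha(A_i)=84$ for all $i\neq j$, and since $5\nmid 84$ while $n\geq 4$ guarantees an index $i\neq j,k$ with $5\mid\alpha(A_i)$, the contradiction follows. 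You instead globalize: the identity $|G|^{n-1}=\prod_i\alpha(A_i)\leq 84^n$ from the proof of Corollary~\ref{nleq6} forces $|G|=210$ (correct: $84^{4/3}<368$ and $2\cdot 210=420>368$), after which Corollary~\ref{orderG}, Proposition~\ref{genus>2} and the unique factorization of the squarefree number $210$ settle both dimensions — $n=5$ is impossible since $210$ has only four prime factors counted with multiplicity, and $n=4$ forces the genera vector $(3,4,6,8)$ up to order, whence $\alpha(A_i)=105>84$ for the index with $g(C_i)-1=2$. Each approach has its merits: the paper's pigeonhole argument is shorter and needs no exact determination of $|G|$, while yours is more concretely arithmetic and, as a by-product, shows that for $n\geq 6$ the size bound $84^{n/(n-1)}\leq 84^{6/5}<210$ alone would already exclude four prime divisors. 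Your reliance on Proposition~\ref{genus>2} and on the standing assumption $n\in\{4,5\}$ is legitimate at this point of Section~5, and citing a bound from the \emph{proof} of Corollary~\ref{nleq6} rather than from a stated result mirrors what the paper itself does in Proposition~\ref{prop:n=6}, so no gap arises.
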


\begin{proof}
    Suppose for a contradiction that $|G|$  has 4 distinct prime divisors, which by the previous lemma are then  2, 3, 5 and 7. Since from Corollary~\ref{Listoforders} no $\alpha(A_i)$ is divisible by $5^2$ or $7^2$, it follows that $|G|=2^a\cdot 3^b\cdot 5\cdot 7$ for some $a,b\in\mathbb{N}$. From  Corollary~\ref{Listoforders}, the only options for $\alpha(A_i)$ with 7 as a factor are $2^2\cdot 3\cdot 7$ and $3\cdot 7$. However, since $n\ge 4$, the only option here for such an $\alpha(A_i)$ is $2^2\cdot 3\cdot 7$. Appealing to Lemma~\ref{Theta+alpha}(iii), there then exists one $j\in\{1,\ldots,n\}$, such that $\alpha(A_i)=2^2\cdot 3\cdot 7$ for all $i\ne j$, and  $7\nmid\alpha(A_j)$. However, since by assumption $5\mid |G|$, we likewise have that there is a unique $k\in\{1,\ldots,n\}$ such that $5\mid \alpha(A_\ell)$ for all $\ell\ne k$ and $5\nmid \alpha(A_k)$. Since $n\ge 4$, we then have that there is an $i\ne j,k$ such that $5\mid \alpha(A_i)$, which yields the desired contradiction. 
\end{proof}

\begin{lemma}\label{lem:3-divisors}
Suppose that  $|G|$ has  3 distinct prime divisors. Then $n=4$ and for distinct $i_1,i_2,i_3,i_4\in\{1,2,3,4\}$ either
\begin{enumerate}
    \item [\textup{(i)}] $|G|=60$ with $\alpha(A_{i_1})=12,\alpha(A_{i_2})=\alpha(A_{i_3})=30, \alpha(A_{i_4})=20$;
    or
    \item [\textup{(ii)}] $|G|=168$ with $\alpha(A_{i_1})=8,\alpha(A_{i_2})=\alpha(A_{i_3})= \alpha(A_{i_4})=84$.
\end{enumerate}  
\end{lemma}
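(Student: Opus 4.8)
The plan is to work entirely with the two multiplicative relations at our disposal. Writing $d_i:=g(C_i)-1$, Corollary~\ref{orderG} together with Lemma~\ref{Theta+alpha}(iii) gives $|G|=\alpha(A_i)\,d_i$ for each $i$, and the computation in the proof of Corollary~\ref{nleq6} yields $|G|^{\,n-1}=\prod_{i=1}^n\alpha(A_i)$. By Proposition~\ref{genus>2} we have $d_i\geq 2$ for all $i$. The decisive input is the explicit list in Corollary~\ref{Listoforders}: among the admissible values of $\alpha(A_i)$, the only one divisible by $7$ is $84=2^2\cdot 3\cdot 7$, none is divisible by $25$ or by $49$, and those divisible by $5$ are exactly $\{5,10,15,20,30,40\}$. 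I would read off the prime-by-prime structure of $|G|$ by applying $p$-adic valuations to $|G|^{\,n-1}=\prod_i\alpha(A_i)$.

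First I would dispose of the possibility $7\mid|G|$. Applying $v_7$ gives $(n-1)\,v_7(|G|)=\#\{i:\alpha(A_i)=84\}\leq n$. Since $n\geq 4$, a value $v_7(|G|)\geq 2$ would force $2(n-1)\leq n$, i.e.\ $n\leq 2$, a contradiction; hence $v_7(|G|)=1$ and exactly $n-1$ of the $\alpha(A_i)$ equal $84$. After relabelling so that $\alpha(A_2)=\cdots=\alpha(A_n)=84$, the relation $|G|=\alpha(A_i)d_i$ forces $d_2=\cdots=d_n=:d$, and then $\alpha(A_2)=d_1\,d^{\,n-2}=84$. With $d_1,d\geq 2$ this equation has the unique solution $d_1=21,\ d=2$ when $n=4$, giving $|G|=168$ and $\alpha(A_1)=d^{3}=8$, which is exactly case~(ii); for $n=5$ there is no solution, since $d=2$ already fails as $8\nmid 84$ and larger $d$ leaves no room for $d_1\geq 2$. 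Thus $7\mid|G|$ forces case~(ii) with $n=4$.

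It remains to treat $7\nmid|G|$, so that by Lemmas~\ref{lem:divisors-options} and~\ref{lem:max-divisors} the prime divisors are precisely $\{2,3,5\}$. Applying $v_5$ and using that no admissible value is divisible by $25$ gives $(n-1)\,v_5(|G|)\leq n$, whence $v_5(|G|)=1$; consequently a single $d_i$ is divisible by $5$ (and that one is $\geq 5$), while the other $n-1$ indices satisfy $5\mid\alpha(A_i)$, i.e.\ $\alpha(A_i)\in\{5,10,15,20,30,40\}$. To make the search finite I would bound $|G|$: the $n-1$ five-divisible values are $\leq 40$ and the remaining one is $\leq 48$, so inserting $d_i=|G|/\alpha(A_i)\geq |G|/40$ (resp.\ $\geq|G|/48$) into $|G|=\prod_i d_i$ gives $|G|^{\,n-1}\leq 48\cdot 40^{\,n-1}$, hence $|G|\leq 40\cdot 48^{1/(n-1)}$, about $145$ for $n=4$ and about $105$ for $n=5$. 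Combined with $|G|\geq 5\cdot 2^{\,n-1}$ and $|G|=2^a3^b5$ with $a,b\geq 1$, this leaves only $|G|\in\{60,90,120\}$ for $n=4$ and $|G|=90$ for $n=5$. For each surviving order I would test the exponent equations $(n-1)a=\sum_i v_2(\alpha(A_i))$ and $(n-1)b=\sum_i v_3(\alpha(A_i))$ against the admissible $\alpha$-values dividing $|G|$ (those at the five-indices lying in $\{5,10,15,20,30,40\}$, and $\alpha(A_1)$ dividing $2^a3^b$). A short valuation count rules out $90$ and $120$ for $n=4$ and $90$ for $n=5$, while $|G|=60$ admits the unique assignment $\{\alpha(A_i)\}=\{12,30,30,20\}$, which is case~(i).

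The main obstacle is the final paragraph. Once $|G|$ is pinned to a handful of values, the problem reduces to deciding whether the prescribed total exponents $(n-1)a$ and $(n-1)b$ of $2$ and $3$ can be written as sums of the $2$- and $3$-valuations of $\alpha$-values taken from a short list. The crux is to secure a tight enough upper bound on $|G|$—via the maxima $40$ and $48$—so that this bookkeeping remains finite; in each non-solution case the obstruction is that the available $v_3$-budget (or $v_2$-budget) falls short of the required total by a small, explicit margin.
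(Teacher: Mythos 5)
Your proposal is correct, and it reaches the lemma by a route that is genuinely different in its mechanics from the paper's, although both rest on the same three inputs (Corollary~\ref{orderG}, Lemma~\ref{Theta+alpha}(iii), and the table in Corollary~\ref{Listoforders}). The paper splits into the three possible prime-divisor sets $\{2,5,7\}$, $\{2,3,5\}$, $\{2,3,7\}$: it kills $\{2,5,7\}$ by observing that at least $n-1$ of the $\alpha(A_i)$ would be divisible by $5$ and at least $n-1$ by $7$, forcing some $\alpha(A_i)$ divisible by $35$, absent from the table; for the other two sets it argues directly about which $\alpha$-values can co-occur, using that each $\alpha(A_i)$ is a product of $n-1$ of the same $n$ fixed factors $g(C_j)-1$, and pins down the combinations by hand. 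You instead exploit the identity $|G|^{n-1}=\prod_i\alpha(A_i)$ via $p$-adic valuations: your $v_7$ argument handles $\{2,5,7\}$ and $\{2,3,7\}$ uniformly (if $7\mid|G|$ then solving $d_1d^{\,n-2}=84$ with $d_1,d\geq 2$ forces $n=4$, $|G|=168$, $\alpha(A_1)=8$, and in particular $5\nmid|G|$), which is tidier than the paper's separate $35$-argument; and for $7\nmid|G|$ you replace the paper's combination analysis by the numerical sandwich $5\cdot 2^{\,n-1}\leq|G|\leq 40\cdot 48^{1/(n-1)}$ and a finite check of $|G|\in\{60,90,120\}$ ($n=4$) and $\{90\}$ ($n=5$). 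I verified the valuation counts you assert but do not display: for $|G|=90$, $n=4$, the $v_3$-total must be $6$ while the three $5$-divisible $\alpha$'s supply at most $3$ and $\alpha(A_1)\mid 18$ at most $2$; for $|G|=120$ any choice meeting the $v_3$-equation caps the $v_2$-total at $8<9$; for $n=5$, $|G|=90$, at most $6<8$ is available; and $|G|=60$ forces exactly $\{12,30,30,20\}$ — so the enumeration closes as claimed. One small slip to repair: in the $7$-branch with $n=5$, the case $d=3$ is not excluded because ``larger $d$ leaves no room for $d_1\geq 2$'' (indeed $84/27\approx 3.1\geq 2$) but because $27\nmid 84$; only $d\geq 4$ is excluded by size. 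In sum, the paper's proof is self-contained case analysis with no estimates, while yours mechanizes the search through explicit bounds plus exponent equations, at the cost of leaving routine (but correct) arithmetic implicit.
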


\begin{proof}
    From Lemma~\ref{lem:divisors-options}, the set of 3 distinct prime divisors of $|G|$ can either be $\{2,5,7\}$, $\{2,3,5\}$ or $\{2,3,7\}$. The case $\{2,5,7\}$ is clear, since from Corollary~\ref{Listoforders} there is no $\alpha(A_i)$ which is divisible by $35$.  For the case $\{2,3,5\}$,  Corollary~\ref{Listoforders} and Lemma~\ref{Theta+alpha}(iii) show that for one $i\in\{1,\ldots,n\}$, without loss of generality say $i=1$, we have $5\nmid \alpha(A_1)$ but $5\mid \alpha(A_i)$ for all $i\ne 1$. Since $n\ge 4$, the only options for $\alpha(A_i)$, for $i\ne 1$, are $2^3\cdot 5$, $2\cdot 3\cdot 5$ or $2^2\cdot 5$. Repeating the same considerations with the prime $3$, we deduce that for at least one $j\in\{2,\ldots,n\}$, say $j=2$, we have $\alpha(A_2)=2\cdot 3\cdot 5$. If we have $\alpha(A_j)=2^2\cdot 5$ for some $j>2$, say $j=3$, then it follows that $|G|=2^3\cdot 3\cdot 5$ or $|G|=2^2\cdot 3\cdot 5$. Since $n\ge 4$, only the latter option is possible. Indeed, as mentioned above  for all $i\ne 1$, we have $\alpha(A_i)\in\{2^3\cdot 5, 2\cdot 3\cdot 5,2^2\cdot 5\}$ and each $\alpha(A_i)$ must be a product of three out of $n$ fixed factors of $|G|$. So $n=4$ and altogether we have
    \[
    \alpha(A_1)=2^2\cdot 3,\quad\alpha(A_2)=2\cdot 3\cdot 5,\quad  \alpha(A_3)=2^2\cdot 5,\quad \alpha(A_4)=2\cdot 3\cdot 5.
    \]
    Note that $\alpha(A_3)=2^3\cdot 5$ is not possible, as by Corollary~\ref{orderG} and Lemma~\ref{Theta+alpha}(iii), we must then have $$2\cdot 3\cdot 5\cdot x=\alpha(A_2)\cdot x=\alpha(A_3)\cdot y=2^3\cdot 5\cdot y$$ for some $x,y\in\mathbb{N}$ with $x$ being a non-trivial power of 2 and $y\mid 2\cdot 3$. It follows that $x=2^{2+a}$, with $a\ge 0$ and $y=2^a\cdot 3$. Since $y\mid 2\cdot 3$, we have $a=1$. However, then here $|G|$ is factorised as $8\cdot 5\cdot 6$, and so we   cannot have $n\ge 4$. Similarly, the remaining option of $\alpha(A_2)=\cdots=\alpha(A_n)=2\cdot 3\cdot 5$ is not possible for $n\ge 4$.

    Lastly, we suppose that the prime divisors of $|G|$ are 2, 3 and 7. As before, we cannot have $7^2\mid |G|$. Without loss of generality we suppose that $7\nmid \alpha(A_1)$. Therefore, since $n\ge 4$, Corollary~\ref{Listoforders} yields that 
    \[
    \alpha(A_2)=\cdots=\alpha(A_n)=2^2\cdot 3\cdot 7,
    \]
    and hence $|G|=2^a\cdot 3\cdot 7$, with $a\ge 3$, and furthermore $\alpha(A_1)=2^a$. Similarly, appealing to Corollary~\ref{orderG} and Lemma~\ref{Theta+alpha}(iii), it follows that $a=3$ and hence $n=4$.
\end{proof}

\begin{lemma}\label{lem:2-divisors}
Suppose that   $|G|$  has  2 distinct prime divisors $p$ and $q$. Then $\{p,q\}$ is either $\{2,5\}$ or $\{2,3\}$. 

Furthermore, if $\{p,q\}=\{2,5\}$, then either 
\begin{enumerate}
    \item [\textup{(i)}] $|G|=40$ and     $n=4$; or
    \item [\textup{(ii)}] $|G|=80$; 
    \end{enumerate}
 and if $\{p,q\}=\{2,3\}$, then either 
 \begin{enumerate}
    \item [\textup{(iii)}] $|G|\in\{24,\,36,  \,  72\}$     and $n=4$; or
    \item [\textup{(iv)}] $|G|=48$. 
    \end{enumerate}

\end{lemma}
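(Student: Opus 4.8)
The plan is to separate the two assertions. Throughout I will use the multiplicative identity $|G|^{\,n-1}=\prod_{i=1}^n\alpha(A_i)$ derived exactly as in the proof of Corollary~\ref{nleq6}, together with Lemma~\ref{Theta+alpha}(iii), which gives $|G|=\alpha(A_i)\,(g(C_i)-1)$ for each $i$, and the list of admissible values of $\alpha(A_i)$ from Corollary~\ref{Listoforders}. I will also repeatedly use that, since each $g(C_i)-1\ge2$ (Corollary~\ref{orderG} and Proposition~\ref{genus>2}), every factor contributes at least one prime, so $n\le\Omega(|G|)$, where $\Omega$ counts prime divisors with multiplicity. For the first assertion, Lemma~\ref{lem:divisors-options} already confines the prime divisors of $|G|$ to $\{2,3,5,7\}$. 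The prime $7$ divides a value in Corollary~\ref{Listoforders} only through $84=2^2\cdot3\cdot7$, so $7\mid|G|$ would force some $\alpha(A_i)=84$ and hence $2,3,7\mid|G|$, three primes; thus $\{p,q\}\subseteq\{2,3,5\}$. To rule out $\{3,5\}$ I would observe that then $|G|$ is odd, so each $\alpha(A_i)\mid|G|$ is odd, i.e. $\alpha(A_i)\in\{3,5,9,15\}$, and each $g(C_i)-1=|G|/\alpha(A_i)$ is odd and $\ge2$, hence $\ge3$. This gives $|G|=\prod_i(g(C_i)-1)\ge 3^n$, whereas $|G|^{\,n-1}=\prod_i\alpha(A_i)\le 15^{\,n}$ yields $|G|\le 15^{\,n/(n-1)}<38$; since $3^n\ge 81$ for $n\ge4$ this is absurd, leaving $\{p,q\}\in\{\{2,5\},\{2,3\}\}$.

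For $\{p,q\}=\{2,5\}$ everything is arithmetic. No admissible $\alpha(A_i)$ is divisible by $25$, so taking $5$-adic valuations in $|G|^{\,n-1}=\prod_i\alpha(A_i)$ forces $v_5(|G|)=1$; exactly one $\alpha(A_i)$ is then a power of $2$ (at most $16$) and the remaining $n-1$ are of the form $5\cdot2^{d}$ with $d\le3$, since $40$ is the largest admissible multiple of $5$. Writing $|G|=2^a\cdot5$ and using $g(C_i)-1=2^{a-d}\ge2$, the $2$-adic count gives $(n-1)(a-3)\le4$, whence $a\le4$; combined with $n\le\Omega(|G|)=a+1$ and $n\ge4$ this leaves $a\in\{3,4\}$, i.e. $|G|\in\{40,80\}$. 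Here $\Omega(40)=4$ forces $n=4$, while $\Omega(80)=5$ permits $n\in\{4,5\}$, exactly as claimed.

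The case $\{p,q\}=\{2,3\}$ is where the work concentrates. The same valuation bookkeeping, the order bound $|G|\le 48^{\,n/(n-1)}$ (now $48$ is the largest relevant $\alpha(A_i)$), the constraint $v_3(|G|)\le2$ (no admissible $\alpha(A_i)$ exceeds $3^2$ in its $3$-part), and $n\le\Omega(|G|)$ reduce the possibilities to the finite list $|G|\in\{24,36,48,72,96,144\}$. The orders $24$ and $36$ have $\Omega=4$ and so force $n=4$, and $48$ has $\Omega=5$ and permits $n\in\{4,5\}$; these three are therefore settled. What remains — and what is genuinely not forced by counting — is to exclude $|G|=96$ and $|G|=144$ altogether, and to exclude the numerically admissible configuration of $|G|=72$ with $n=5$.

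These residual cases are the main obstacle, and here I would leave pure combinatorics behind. In each of them the small-genus factors are pinned to rigid triangle types: $96$ and $144$ force the type $[2,3,8]$ with $\alpha=48$, while the $n=5$ realisation of $72$ forces three factors of type $[2,3,9]$ with $\alpha=36$. Since these types force curves of genus $3$, $4$ or $5$, I would invoke Broughton's classification of finite group actions on Riemann surfaces of low genus \cite{Broughton91} (as corrected by Polizzi \cite{Pol09}) to identify the essentially unique group carrying the forced action, and then use the freeness criterion $\bigcap_i\Sigma(T_i)=\{1\}$ of Lemma~\ref{sigma}: one exhibits a single nontrivial element — for the $72$ case a generator of the unique order-$3$ subgroup of the cyclic Sylow $3$-subgroup, and for $96$ and $144$ a suitable involution — that is ramified on every factor and hence lies in $\bigcap_i\Sigma(T_i)$, contradicting the free action. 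The crux is precisely this passage: the valuation and divisibility arguments cannot detect the freeness condition, so the elimination of $96$, $144$ and of $(72,n=5)$ must come from the actual structure of the candidate groups rather than from arithmetic.
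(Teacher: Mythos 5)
Your first paragraph and your $\{2,5\}$ analysis are correct and are, in substance, the paper's own route: the paper also works from $|G|=\alpha(A_i)(g(C_i)-1)$ (Corollary~\ref{orderG}, Lemma~\ref{Theta+alpha}), the admissible $\alpha$-values of Corollary~\ref{Listoforders}, and $g(C_i)-1\ge 2$ (Proposition~\ref{genus>2}), and enumerates the tuples $(\alpha(A_1),\ldots,\alpha(A_n))$ directly; your valuation bookkeeping lands on the same lists $\{40\ (n=4),\,80\}$. (Incidentally, the paper's $\{2,5\}$ enumeration omits the consistent tuple $\alpha=(8,40,40,40)$ with $(g_i-1)=(10,2,2,2)$ and $|G|=80$, but harmlessly, since $80$ is retained anyway.)

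In the $\{2,3\}$ case you genuinely part ways with the paper, and your diagnosis is substantially right in a way worth recording: the configurations you flag do pass every constraint the paper's proof invokes, e.g.\ $(g_i-1)=(12,2,2,2)$ with $\alpha=(8,48,48,48)$ and $|G|=96$ (also $(6,4,2,2)$ and, with $n=5$, $(6,2,2,2,2)$), $(g_i-1)=(3,3,4,4)$ with $\alpha=(48,48,36,36)$ and $|G|=144$, and $(g_i-1)=(2,2,2,3,3)$ with $\alpha=(36,36,36,24,24)$, $|G|=72$, $n=5$; one checks $|G|^{n-1}=\prod_i\alpha(A_i)$ and admissibility in each case. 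The paper's proof never confronts these: its four-combination list in the $3^2\nmid|G|$ branch silently drops the $\alpha$-value $48$, and its $3^2\mid|G|$ branch drops $(48,48,36,36)$ and the $n=5$ factorisation of $72$ — so you have located a real lacuna in the paper's written argument, which claims the conclusion by arithmetic alone. However, as a proof of the lemma your proposal is incomplete exactly at the decisive points, since the eliminations of $96$, $144$ and $(72,n=5)$ are deferred to unexecuted arguments whose sketches have soft spots. For $96$ the forced genus-$3$ action exists — it is $(Z_4\times Z_4)\rtimes S_3$ with type $[2,3,8]$, the very group the paper invokes in Proposition~\ref{prop:n=6} — and the companion factors have genus $5$, $7$ or $13$, outside the quoted low-genus classifications, so ``a suitable involution'' must actually be produced by a conjugacy/freeness analysis in the style of Theorem~\ref{thm:48}. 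For $72$ your mechanism (``the unique order-$3$ subgroup of the cyclic Sylow $3$-subgroup'') is unjustified as stated — distinct Sylow $3$-subgroups need not share their order-$3$ subgroup, and no generating systems are exhibited — whereas the clean kill is that no group of order $72$ acts on a genus-$3$ surface at all (Broughton's table, exactly as the paper uses in Proposition~\ref{prop:divisors-2-5}), which disposes of every order-$72$ configuration since each contains a genus-$3$ factor. For $144$ note that Broughton's Table~5 concerns genus $3$, while the forced $[2,3,8]$-action is on a genus-$4$ curve, so you would need a separate input there.

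Finally, your closing thesis — that these residual cases ``cannot'' be detected by counting — is false, and this is how the gap (yours and the paper's) is most economically closed. Sharpen Lemma~\ref{upper_bound_on_m} as its own proof almost does: if $h$ is a generator of prime-power order $p^e$ in $T_i$, then for each factor $C_k$ the set of exponents $t$ with $h^t$ having a fixed point on $C_k$ is a union of subgroups of the cyclic $p$-group $\langle h\rangle$, hence (the subgroups forming a chain) itself a subgroup; since every power of $h$ fixes the same point on $C_i$, condition \eqref{sigmacondition} forces these subgroups to intersect trivially over $k\ne i$, so $\langle h\rangle$ acts freely on some $C_k$ with $k\neq i$, and the unramified quotient gives $p^e\mid g(C_k)-1$. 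Applying this to the order-$8$ generator forced by $\alpha\in\{16,48\}$ (types $[2,4,8]$, $[2,3,8]$) and the order-$9$ generator forced by $\alpha=36$ (type $[2,3,9]$) kills all three residual families by pure divisibility: $8$ divides none of $\{12,2,2\}$, $\{6,4,2\}$, $\{6,2,2,2\}$ or $\{3,4,4\}$, and $9$ divides none of $\{2,2,3,3\}$. (The same sharpening would incidentally also dispose of $24$, $36$ and $80$, which the lemma retains — harmless, as the lemma only bounds the possibilities, but evidence that neither the authors nor you exploited this tool.) So: your arithmetic reduction is correct and more honest than the paper's, but your proposal as written does not yet prove the statement, and the missing step is available by counting after all.
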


\begin{proof}
    From Lemma~\ref{lem:divisors-options} together with Corollary~\ref{Listoforders}, we observe that the set of 2 distinct prime divisors of $|G|$ can either be $\{2,3\}$, or $\{2,5\}$.

\medskip

\noindent\underline{Case 1:} Suppose $|G|=2^a5^b$ for some $a,b\in\mathbb{N}$.
As seen before, we may assume that $5\nmid \alpha(A_1)$ but $5\mid \alpha(A_i)$ for all $i\ne 1$. Since $n\ge 4$, the options for $\alpha(A_i)$, for $i\ne 1$, are $2^3\cdot 5$ or $2^2\cdot 5$, and the options for $\alpha(A_1)$ are $2^3$ or $2^4$.

Suppose $\alpha(A_1)=2^3$. The only possibility is 
\[
\alpha(A_1)=2^3,\quad \alpha(A_2)=2^2\cdot 5,\quad \alpha(A_3)=2^2\cdot 5,\quad \alpha(A_4)=2^2\cdot 5,
\]
and here $|G|=2^3\cdot 5$.

Next suppose $\alpha(A_1)=2^4$. Then the only two combinations that work are
\[
\alpha(A_1)=2^4,\quad \alpha(A_2)=2^3\cdot 5,\quad \alpha(A_3)=2^3\cdot 5,\quad \alpha(A_4)=2^2\cdot 5,
\]
 as well as 
\[
\alpha(A_1)=2^4,\quad \alpha(A_2)=\alpha(A_3)=\alpha(A_4)=\alpha(A_5)=2^3\cdot 5.
\]
In both situations we have $|G|=2^4\cdot 5$. 

\medskip

\noindent\underline{Case 2:} Suppose $|G|=2^a3^b$ for some $a,b\in\mathbb{N}$.
First we suppose that $3^2\mid |G|$. Then without loss of generality, we may suppose that $3^2$ divides both $\alpha(A_2)$ and $\alpha(A_3)$. From Corollary~\ref{Listoforders}, we have that the options for $\alpha(A_2)$ and $\alpha(A_3)$ are $2^2\cdot 3^2$ or $2\cdot3^2$. Further, from considering Corollary~\ref{orderG} and Lemma~\ref{Theta+alpha}, we note that $\alpha(A_2)=\alpha(A_3)$. Similarly, one sees that the only combinations (subject to reordering the $A_i$'s) are:
\begin{enumerate}
    \item [(a)] $\alpha(A_1)=2^2\cdot 3$, $\alpha(A_2)=2\cdot 3^2$,  $\alpha(A_3)=2\cdot 3^2$, $\alpha(A_4)=2^2\cdot 3$, and so $|G|=2^23^2$;
    \item [(b)] $\alpha(A_1)=2^3$, $\alpha(A_2)=2^2\cdot 3^2$,  $\alpha(A_3)=2^2\cdot 3^2$, $\alpha(A_4)=2^2\cdot 3^2$, and so $|G|=2^33^2$.
\end{enumerate}
In particular, here $n=4$.

Next we suppose that $3^2\nmid |G|$. Hence, assuming that $3\nmid \alpha(A_1)$, the options for $\alpha(A_i)$, for $i\ne 1$, are $2^4\cdot 3$, $2^3\cdot 3$, or $2^2\cdot 3$. As seen before, the only options for $\alpha(A_1)$ are $2^4$ or $2^3$. Then the four possible combinations (again subject to reordering the $A_i$'s) are
\[
\alpha(A_1)=2^4,\quad \alpha(A_2)=2^3\cdot 3,\quad \alpha(A_3)=2^3\cdot 3,\quad \alpha(A_4)=2^2\cdot 3,
\]
or
\[
\alpha(A_1)=2^4,\quad \alpha(A_2)=2^3\cdot 3,\quad \alpha(A_3)=2^3\cdot 3,\quad \alpha(A_4)=2^3\cdot 3,\quad \alpha(A_5)=2^3\cdot 3,
\]
or
\[
\alpha(A_1)=2^3,\quad \alpha(A_2)=2^3\cdot 3,\quad \alpha(A_3)=2^3\cdot 3,\quad \alpha(A_4)=2^3\cdot 3,
\]
and thus $|G|=2^4\cdot 3$, or
\[
\alpha(A_1)=2^3,\quad \alpha(A_2)=2^2\cdot 3,\quad \alpha(A_3)=2^2\cdot 3,\quad \alpha(A_4)=2^2\cdot 3,
\]
and here $|G|=2^3\cdot 3$.  Hence the result.
\end{proof}

\begin{lemma}\label{lem:80}
Suppose that $|G|$  is divisible by only $2$ and $5$. Then  $|G|=40$.
\end{lemma}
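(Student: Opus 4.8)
The plan is to use Lemma~\ref{lem:2-divisors} to reduce the statement to ruling out the single case $|G|=80$: that lemma already shows that if only the primes $2$ and $5$ divide $|G|$, then $|G|\in\{40,80\}$, so it suffices to derive a contradiction from $|G|=80=2^4\cdot 5$. I would first record, from the proof of Lemma~\ref{lem:2-divisors} (Case~1, sub-case $\alpha(A_1)=2^4$), that in each admissible configuration for $|G|=80$ at least one factor has $\alpha(A_i)=40$. By Corollary~\ref{Listoforders} such a factor has ramification type exactly $[2,4,5]$, so the corresponding spherical generating triple exhibits $G$ as a quotient of the triangle group $\Delta(2,4,5)=\langle x,y,z\mid x^2=y^4=z^5=xyz=1\rangle$.

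The key step is then to pass to abelianisations. A short computation gives $\Delta(2,4,5)^{\mathrm{ab}}\cong Z_2$: writing the abelianisation additively, the image of $z$ satisfies both $5z=0$ and $4z=0$ (the latter since $4z=-4(x+y)=0$, using $2x=4y=0$), hence $z=0$; one is then left with $x=-y$ and $2y=0$, giving $Z_2$. Consequently $G^{\mathrm{ab}}$ is a quotient of $Z_2$. Since $|G|=2^4\cdot 5$, the group $G$ is solvable by Burnside's theorem and it is non-trivial, so $G^{\mathrm{ab}}\neq 1$; therefore $G^{\mathrm{ab}}\cong Z_2$ and $|G'|=40$.

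Next I would exploit the structure forced by $|G'|=40=2^3\cdot 5$. The Sylow $5$-subgroup $P_5\cong Z_5$ of $G'$ is normal in $G'$, hence characteristic and thus normal in $G$; by Schur--Zassenhaus $G=P_5\rtimes P_2$ with $P_2\in\mathrm{Syl}_2(G)$ of order $16$. I then split according to the action of $P_2$ on $P_5$. If $P_2$ centralises $P_5$, then $G=P_5\times P_2$ and $5\mid |G^{\mathrm{ab}}|$, contradicting $G^{\mathrm{ab}}\cong Z_2$. If the action is non-trivial, then $[P_2,P_5]=P_5$ (as $Z_5$ is a simple module under any non-trivial action), so $P_5\le G'$ and $G^{\mathrm{ab}}\cong P_2/[P_2,P_2]$. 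But $P_2$ is a $2$-group with $P_2^{\mathrm{ab}}\cong Z_2$, which by the Burnside basis theorem forces $P_2$ to be cyclic, i.e.\ $P_2\cong Z_{16}$; then $P_2^{\mathrm{ab}}\cong Z_{16}\neq Z_2$, a contradiction. Either way $|G|=80$ is impossible, leaving $|G|=40$.

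The main obstacle is to avoid a brute-force search through the many groups of order $80$ together with all their possible generating systems and the freeness condition~(\ref{sigmacondition}). The abelianisation argument sidesteps this entirely, but it rests on the delicate point that Lemma~\ref{lem:2-divisors} actually produces a factor of type $[2,4,5]$, and this is essential: the abelianisation of $\Delta(2,5,5)$ is $Z_5$ rather than $Z_2$, so a factor of that shape would not yield the coprimality contradiction. Thus the real care lies in pinning down the ramification type via Corollary~\ref{Listoforders} and Lemma~\ref{lem:2-divisors} \emph{before} invoking the abelianisation computation.
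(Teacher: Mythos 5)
Your proof is correct, and it takes a genuinely different route from the paper's. The paper's own argument is a one-liner: from the proof of Lemma~\ref{lem:2-divisors}, a group of order $80$ would have to act on a surface of genus~$3$ with $\alpha(A)=40$, and the paper then asserts that the corresponding type is $[2,3,15]$ and rules it out because a group of order $80$ has no element of order~$3$. Note, however, that $[2,3,15]$ does not actually occur in Corollary~\ref{Listoforders} (its $\alpha$-value would be $20$, and $15\nmid 20$ contradicts Lemma~\ref{upper_bound_on_m}); the unique type with $\alpha=40$ is $[2,4,5]$, exactly as you identify, and a group of order $80$ certainly contains elements of orders $2$, $4$ and $5$, so the printed dismissal does not apply --- the authors' safety net is the remark following the lemma, which falls back on Broughton's classification \cite[Table~5]{Broughton91} (no automorphism group of a genus-$3$ surface has order $80$). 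Your argument replaces that citation with a self-contained proof that no group of order $80$ is a quotient of $\Delta(2,4,5)$, and every step checks out: the computation $\Delta(2,4,5)^{\mathrm{ab}}\cong Z_2$ is right; groups of order $2^4\cdot 5$ are solvable, so $G^{\mathrm{ab}}\cong Z_2$ and $|G'|=40$; the Sylow $5$-subgroup of $G'$ is its unique Sylow $5$-subgroup (as $n_5\mid 8$), hence characteristic in $G'$ and normal in $G$; and in the non-trivial-action case, since $P_5\le G'$ one gets $G^{\mathrm{ab}}\cong (G/P_5)^{\mathrm{ab}}\cong P_2^{\mathrm{ab}}$, while a $2$-group whose abelianisation is $Z_2$ must (Burnside basis) be cyclic and hence of order $2$, not $16$. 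Your closing caveat is also well judged: the argument hinges on the type being $[2,4,5]$ rather than, say, $[2,5,5]$, and that is secured by Corollary~\ref{Listoforders} together with the $\alpha$-bookkeeping in Lemma~\ref{lem:2-divisors}. In terms of trade-offs: the paper's approach (once corrected) is a table look-up resting on the genus-$3$ classification of Broughton--Polizzi, whereas yours is elementary and independent of that classification, and it proves strictly more --- there is no $(2,4,5)$-generated group of order $80$ at all --- thereby also repairing the misprint in the paper's own proof.
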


\begin{proof}
  By the previous result, we only need to consider groups of order $80$. For a group of order 80, the above proof yields that we need an action on a surface of genus 3.  Here we have $\alpha(A)=40$, and so $A=[2,3,15]$ by Corollary~\ref{Listoforders}. But since $|G|=80$, there is no element of order 3.
\end{proof}

Alternatively, the above result follows immediately from \cite[Table~5]{Broughton91}, which shows that the automorphism group of a surface of genus 3 never has order 80. Similarly, many of the proofs below can be shortened using Broughton's classification in \cite[Table~5]{Broughton91} or even using an ingredient of Broughton's classification that the possible orders of automorphisms of a surface of genus 3 are 2, 3, 4, 6, 7, 8, 9, 12, or 14. Though we provide  alternative self-contained proofs, when this is feasible. 

\begin{lemma}\label{lem:one-prime-divisor}
Suppose that  $|G|$ is a prime power. Then either
\begin{enumerate}
    \item [\textup{(i)}] $|G|=2^4$ and     $n=4$; or
    \item [\textup{(ii)}] $|G|=2^5$. 
\end{enumerate}  
\end{lemma}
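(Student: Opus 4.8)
The plan is to classify, using the constraints already established, all prime-power orders $|G|=p^k$ that admit the required data. By Lemma~\ref{lem:divisors-options} the only admissible prime is $p\in\{2,3,5,7\}$. For each such prime I would argue that a higher power is forced to be $2$, and that the exponent is tightly constrained, as follows.

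First I would eliminate the primes $3$, $5$ and $7$. By Corollary~\ref{orderG} we have $|G|=\prod_{i=1}^n(g(C_i)-1)$, and by Lemma~\ref{Theta+alpha}(iii) each $\alpha(A_i)=\prod_{j\ne i}(g(C_j)-1)$ divides $|G|$. If $|G|=p^k$ with $p\in\{3,5,7\}$, then every $\alpha(A_i)$ is a power of $p$. But inspecting Corollary~\ref{Listoforders}, the only $\alpha(A_i)$ that are powers of $3$, $5$ or $7$ are $3$, $5$, and $9=3^2$ (from the types $[4,4,4]$ and $[3,3,3,3]$ for $\alpha=3$, $[5,5,5]$ for $\alpha=5$, and $[3,3,9]$ for $\alpha=9$); there is no power of $7$ at all, and no $\alpha(A_i)$ is a power of $5$ exceeding $5$. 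Since $n\ge 4$ forces $|G|=\alpha(A_i)(g(C_i)-1)$ with $g(C_i)-1\ge 2$ a proper power of $p$, these small values cannot be sustained across four or five factors: the product relation $|G|^{n-1}=\prod_{i=1}^n\alpha(A_i)$ from the proof of Corollary~\ref{nleq6} would then force $|G|$ to be bounded by a fixed power of the largest admissible $\alpha$, contradicting $|G|\ge p^n\ge 3^4$. Hence $p=2$.

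So $|G|=2^k$, and every $\alpha(A_i)$ is a power of $2$; from Corollary~\ref{Listoforders} the admissible values are exactly $\alpha(A_i)\in\{2,4,8,16\}$, with the largest being $\alpha=16$ from type $[2,4,8]$. Using $|G|^{n-1}=\prod_{i=1}^n\alpha(A_i)$ together with $\alpha(A_i)\le 16=2^4$ gives $2^{k(n-1)}\le 2^{4n}$, so $k\le 4n/(n-1)$; for $n=4$ this yields $k\le 5$ (since $16/3<6$) and for $n=5$ it yields $k\le 5$. Combined with the lower bound $2^k=|G|\ge 2^n$ from Proposition~\ref{genus>2}, we get $k\ge n\ge 4$, so $k\in\{4,5\}$. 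When $k=4$, the bound $|G|\ge 2^n$ forces $n=4$, giving case~(i); when $k=5$, either $n=4$ or $n=5$ is a priori allowed, giving case~(ii). I expect the main obstacle to be the bookkeeping in ruling out $k\ge 6$: one must verify that no combination of the admissible $\alpha$-values (all at most $16$) can satisfy $\prod_{i=1}^n\alpha(A_i)=|G|^{n-1}$ while the $\alpha(A_i)=\prod_{j\ne i}(g(C_j)-1)$ remain mutually consistent with a single factorisation $|G|=\prod(g(C_j)-1)$ into powers of $2$; checking this compatibility (rather than just the crude exponent count) is the step requiring care, and it is exactly the kind of combinatorial consistency argument already used in Lemmas~\ref{lem:3-divisors} and~\ref{lem:2-divisors}.
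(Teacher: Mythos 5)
Your proposal is correct and follows essentially the same route as the paper: eliminate the odd primes by noting that the only odd-prime-power values of $\alpha(A_i)$ in Corollary~\ref{Listoforders} are $3$, $5$, $9$, then bound the exponent via $|G|^{n-1}=\prod_{i=1}^n\alpha(A_i)$ together with $\alpha(A_i)\le 2^4$ and the lower bound $|G|\ge 2^n$ from Proposition~\ref{genus>2}, which is exactly the paper's ``one can check that $|G|\ge 2^6$ is not possible'' made explicit. One cosmetic slip: $[4,4,4]$ has $\alpha=8$, not $3$ (the type with $\alpha=3$ is $[3,3,3,3]$ alone), but this misattribution does not affect the argument since your lists of admissible $\alpha$-values are otherwise right.
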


\begin{proof}
    Suppose $|G|=p^a$ for some prime $p$ and for $a\ge 4$. From Corollary~\ref{Listoforders}, we see that $p=2$, since $3^2$ is the highest prime power listed, which is not a power of 2. Since the highest power of 2 listed is $2^4$, one can check that $|G|\ge2^6$ is not possible if $n\ge 4$. The remaining cases (as usual subject to reordering the $A_i$'s) are all possible, with the following breakdown of values:
    \begin{enumerate}
        \item [(a)] $\alpha(A_1)=2^3$, $\alpha(A_2)=2^3$,  $\alpha(A_3)=2^3$, $\alpha(A_4)=2^3$, and so $|G|=2^4$;
        \item [(b)] $\alpha(A_1)=2^4$, $\alpha(A_2)=2^4$,  $\alpha(A_3)=2^4$, $\alpha(A_4)=2^3$, and thus $|G|=2^5$.
        \item [(c)] $\alpha(A_1)=2^4$, $\alpha(A_2)=2^4$,  $\alpha(A_3)=2^4$, $\alpha(A_4)=2^4$, $\alpha(A_5)=2^4$, and  $|G|=2^5$.\qedhere
    \end{enumerate}
\end{proof}

\begin{lemma}\label{lem:abelian}
The group $G$ is non-abelian.
\end{lemma}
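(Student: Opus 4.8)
The plan is to suppose that $G$ is abelian and derive a contradiction with the freeness condition (\ref{sigmacondition}), building on the fact (Lemma~\ref{notcyclic}) that $G$ cannot be cyclic.

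First I would extract a uniform feature of the admissible configurations. Inspecting the lists produced in Lemmas~\ref{lem:3-divisors}, \ref{lem:2-divisors}, \ref{lem:80} and \ref{lem:one-prime-divisor}, one checks that in every surviving configuration each value $\alpha(A_i)$ is even. Moreover, by Corollary~\ref{Listoforders} any ramification type with even $\alpha$ contains at least one entry of even order, since the all-odd types $[3,3,3,3]$, $[5,5,5]$, $[3,3,5]$, $[3,3,9]$ are precisely those with odd $\alpha$. Consequently every spherical system $T_i$ contains a generator of even order, so $\Sigma(T_i)$ contains an involution for each $i\in\{1,\ldots,n\}$.

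Next I would exploit this under the assumption that $G$ is abelian. Since every $\Sigma(T_i)$ contains an involution, $G$ cannot have a unique involution: if it did, that unique involution $t_0$ would lie in $\Sigma(T_i)$ for all $i$, hence in $\bigcap_i\Sigma(T_i)$, contradicting (\ref{sigmacondition}). For an abelian group, having more than one involution is the same as having a non-cyclic Sylow $2$-subgroup, i.e.\ $2$-rank at least $2$. Thus I may assume $G[2]\cong\mathbb F_2^{d}$ with $d\ge 2$, which in particular forces $4\mid|G|$; one notes that all orders permitted by the previous lemmas are indeed divisible by $4$. It then remains to rule out the finitely many abelian groups of $2$-rank at least $2$ whose order and associated $\alpha$-pattern appear in Lemmas~\ref{lem:3-divisors}--\ref{lem:one-prime-divisor}. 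Here I would argue inside the Sylow $2$-subgroup $P$: for each system the $2$-parts of its generators must generate $P$ and, because the generators multiply to $1$, their images in $P/2P\cong\mathbb F_2^{d}$ both span and sum to zero, which already requires at least $d+1$ of them to be nontrivial. Matching this against the short list of admissible ramification types, whose available element orders are further constrained by the exponent of $P$, forces the involutions $\iota_j^{(i)}=(h_j^{(i)})^{m_j^{(i)}/2}$ arising in the various systems to be too degenerate to satisfy $\bigcap_i\Sigma(T_i)=\{1_G\}$. The model case is a triple of type $[2,3,10]$ in $(\ZZ/2)^2\times\ZZ/15$, the $2$-rank-$2$ abelian group of order $60$: product-triviality forces the two nontrivial $2$-part contributions to coincide, so they cannot span $G[2]\cong\mathbb F_2^2$, contradicting generation; every remaining pair (group, configuration) collapses by an analogous computation.

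I expect the main obstacle to be precisely this last step. Unlike the case of a cyclic Sylow $2$-subgroup, the $2$-rank $\ge 2$ groups are not eliminated by any single uniform statement, since merely counting involutions is compatible with $\bigcap_i\Sigma(T_i)=\{1_G\}$ once there are at least three involutions. The contradiction genuinely requires combining, for each admissible order, the spanning condition on $P/2P$, the product relation, and the restricted list of element orders. As the number of such groups is small, this is a finite verification, which can in addition be confirmed by direct computation.
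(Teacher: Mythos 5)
Your skeleton is sound, and parts of it are genuinely nice: the observation that every surviving $\alpha(A_i)$ is even, hence every $\Sigma(T_i)$ contains an involution, and the resulting one-line elimination of every abelian group with a unique involution (i.e.\ with cyclic Sylow $2$-subgroup) via condition (\ref{sigmacondition}) is cleaner than the paper's treatment of those groups, which instead kills them one order at a time through impossible types. But there is a genuine gap at exactly the point you flag yourself: the ``finite verification'' for the $2$-rank $\ge 2$ groups is never carried out, and the one model case you do work is not representative of the hard cases. The type $[2,3,10]$ at order $60$ dies for the trivial reason that in an abelian group the product of commuting elements of orders $2$ and $3$ has order $6$, so no spherical triple of type $[2,3,10]$ exists in any abelian group at all; the same commuting-order computation disposes of $[2,3,9]$, $[2,3,18]$ and $[2,4,8]$, hence of orders $72$, $36$, $32$ and the $\alpha=16$ patterns at $48$, and most remaining configurations (orders $168$, $40$, $24$, the $\alpha=8$ pattern at $48$) collapse because the types with $\alpha=8$ contain no element of order divisible by $3$, $5$ or $7$, so generation fails on the \emph{odd} part of $G$ --- a mechanism your Sylow-$2$ framework mentions only obliquely.

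Where your sketch would actually fail to close is order $16$: the groups $Z_2\times Z_8$ (all systems forced to type $[2,8,8]$) and $Z_4\times Z_4$ (all systems of type $[4,4,4]$) \emph{do} admit spherical generating systems of the required types, so no amount of spanning/product-relation/exponent bookkeeping in $P/2P$ can produce a contradiction; indeed for $Z_4\times Z_4$ your count gives exactly $d+1=3$ nontrivial images, which is perfectly consistent. Here one must exhibit a single nontrivial element lying in every $\Sigma(T_i)$, which is what the paper does: any $[4,4,4]$ system in $Z_4\times Z_4=\langle x,y\rangle$ contains exactly one generator from the coset $x^{\pm1}\Phi(G)$, and every element of that coset squares to $x^2$, so $x^2\in\bigcap_{i}\Sigma(T_i)$; in $Z_2\times Z_8$ the two cyclic subgroups of order $8$ intersect nontrivially, so the fourth power of every order-$8$ generator is the same central involution. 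Your phrase that the involutions $\iota_j^{(i)}$ are ``too degenerate'' names the right phenomenon but supplies no argument for it, and since these are precisely the groups with three involutions --- the situation you yourself concede is compatible with mere counting --- the lemma is not proved as written; deferring it to ``direct computation'' leaves out the only cases in which the freeness condition, rather than generation, is the obstruction.
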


\begin{proof}
  Suppose for a contradiction that $G$ is abelian. For the case when $|G|$ has three distinct prime divisors, it follows immediately from Corollary~\ref{Listoforders} that this is not possible, because, referring to Lemma~\ref{lem:3-divisors}, the types associated to $\alpha(A_{i_1})$ do not contain an element of order divisible by 5 or 7. 

  We next consider the case when $|G|$ is a prime power, that is $2^4$ or $2^5$. For the case $|G|=2^5$, note that for some~$i$   the type for  $A_i$ is $[2,4,8]$.   However $[2,4,8]$ cannot be the type of a spherical system of generators for an abelian group.   For the case $|G|=2^4$, the possible types  are $[2,2,2,4]$, $[2,8,8]$, or $[4,4,4]$. Note that $[2,2,2,4]$ is not possible for a spherical system of generators, since we need at least two elements of order 4, in order for the product of all four elements to be trivial. If for one $i$ we have  $A_i=[2,8,8]$, then $G\cong Z_2\times Z_8$. It follows that $A_i=[2,8,8]$ for every $i$, and then condition (\ref{sigmacondition}) does not hold.  So we may assume that  $A_i=[4,4,4]$ for every $i$. Thus $G\cong Z_4\times Z_4$. We likewise show that condition (\ref{sigmacondition}) does not hold as follows. Let $G=\langle x,y\rangle$. First observe that the elements of order~2 are exactly $x^2,y^2,x^2y^2$. Next, for any spherical generating system of type $[4,4,4]$, exactly one generator must be from the coset $x^{\pm 1}G^2=x^{\pm 1}\Phi(G)$. Hence we see that $x^2\in\cap_{i=1}^4 \Sigma(T_i)$.

  Finally we consider the case when $|G|$ has two distinct prime divisors. From Lemma~\ref{lem:80}, if 5 divides $|G|$, we only need to consider the abelian groups of order $40$. 
  From the proof of Lemma~\ref{lem:2-divisors}, we have that  $\alpha(A_1)=8$ and as $G$ must have a generator of order divisible by 5, the result follows from Corollary~\ref{Listoforders}.  We next consider abelian groups of order 36 and of order 72; cf. Lemma~\ref{lem:2-divisors}. For $|G|=72$, from the proof of Lemma~\ref{lem:2-divisors}, we have that  $\alpha(A_i)=36$, for some $i$, and for $|G|=36$ we likewise have that  $\alpha(A_i)=18$.  Hence for $|G|=72$,  Corollary~\ref{Listoforders} yields that   $A_i=[2,3,9]$, which cannot correspond to a spherical system of generators for~$G$. For $|G|=36$ we have  $A_i=[2,3,18]$, which similarly never yields a spherical system of generators for $G$. For the remaining groups of orders 24 and 48, note that if $\alpha(A_i)=8$ there are no feasible options.   It then suffices to consider the case when $\alpha(A_i)=16$, which has the type $[2,4,8]$. As we clearly cannot have a generating set here, the proof is complete.  
\end{proof}

\begin{thm}\label{thm:order-168}
The order of $G$ has at most $2$ distinct prime divisors.
\end{thm}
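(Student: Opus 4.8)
The statement to be proved is that $|G|$ has at most two distinct prime divisors. By Lemma~\ref{lem:max-divisors} we already know that $|G|$ has at most three distinct prime divisors, so it suffices to rule out the case of exactly three distinct prime divisors. By Lemma~\ref{lem:3-divisors}, this case splits into only two possibilities for $(|G|,n)$: either $|G|=60$ with $n=4$, or $|G|=168$ with $n=4$. The plan is to eliminate each of these two candidates in turn by showing that no group of the prescribed order admits the required collection of spherical generating systems.

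For the case $|G|=60$, I would first recall from Lemma~\ref{lem:3-divisors}(i) that the four $\alpha(A_i)$-values are $12,30,30,20$, which by Lemma~\ref{Theta+alpha}(iii) force the genera vector, via Corollary~\ref{orderG}, to have factors $g(C_i)-1$ equal to $60/12=5$, $60/30=2$, $60/30=2$, $60/20=3$; that is, the genera vector is $(3,6,3,4)$ up to reordering. By Corollary~\ref{Listoforders}, an $\alpha(A_i)=30$ forces the ramification type $[2,3,10]$, an $\alpha(A_i)=20$ forces $[2,5,5]$, and an $\alpha(A_i)=12$ forces one of several types. Since $G$ embeds into $\aut(C_i)$ for the factor of genus $6$, the group $G$ of order $60$ must be a genus-$6$ automorphism group and must contain elements of orders $10$ (from type $[2,3,10]$) and of order $5$. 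The natural candidate is $A_5$ (the alternating group), and I would use the fact that $A_5$ has no element of order~$10$ to rule out the type $[2,3,10]$; more robustly, one checks via the classification of groups of order~$60$ (or directly) that none of them carries all of the required spherical systems simultaneously subject to condition~\eqref{sigmacondition}.

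For the case $|G|=168$, Lemma~\ref{lem:3-divisors}(ii) gives $\alpha(A_i)$-values $8,84,84,84$, so the genera vector has $g(C_i)-1$ equal to $168/8=21$, $168/84=2$, $168/84=2$, $168/84=2$, i.e.\ genera $(22,3,3,3)$ up to reordering. By Corollary~\ref{Listoforders} the value $\alpha(A_i)=84$ forces the ramification type $[2,3,7]$, so $G$ must be a Hurwitz group of order~$168$, necessarily $G\cong\text{GL}_3(2)\cong\PSl_2(7)$, acting on three genus-$3$ surfaces each with a $(2,3,7)$-spherical system. The decisive obstruction is then condition~\eqref{sigmacondition}: every $(2,3,7)$-generating triple of this group uses an involution, an element of order~$3$ and an element of order~$7$, and I would show that across any three such triples the sets $\Sigma(T_i)$ cannot have trivial intersection. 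Concretely, since there is a single conjugacy class of involutions and a single class of elements of order~$3$ in $\PSl_2(7)$, every $\Sigma(T_i)$ contains the full class of involutions, so $\bigcap_i\Sigma(T_i)$ already contains every involution and is therefore nontrivial.

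The main obstacle I anticipate is the $|G|=168$ case, and specifically making the last step rigorous: one must verify that the conjugacy-class structure of $\PSl_2(7)$ genuinely forces a common nontrivial element in all the $\Sigma(T_i)$. The key observation that $\PSl_2(7)$ has a unique class of involutions (indeed a unique class of elements of each order $2$ and $3$) makes this clean, since then any spherical system of type $[2,3,7]$ contributes that entire class to its $\Sigma(T_i)$; the same rigidity is what makes the $|G|=60$ argument go through once one pins down the group as $A_5$. Thus both candidate orders are eliminated and $|G|$ has at most two distinct prime divisors.
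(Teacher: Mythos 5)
Your overall framing is right (reduce via Lemma~\ref{lem:max-divisors} and Lemma~\ref{lem:3-divisors} to the two candidate orders $60$ and $168$), and your treatment of $|G|=168$ is essentially sound: once the group is identified as $\PSl_2(7)$, the observation that its involutions form a single conjugacy class, so that every $\Sigma(T_i)$ contains the whole class (note that for the genus-$22$ factor the possible types $[2,2,2,4]$, $[2,8,8]$, $[4,4,4]$ all contain even-order generators, and $\PSl_2(7)$ has no element of order $8$), is exactly the paper's argument against condition~\eqref{sigmacondition}. Where you diverge from the paper is in how you get to $\PSl_2(7)$: you invoke the classical fact that the unique Hurwitz group of order $168$ is $\PSl_2(7)$, whereas the paper proves this from scratch by noting that a solvable group of order $168$ has a normal Sylow $3$- or Sylow $7$-subgroup and deriving a contradiction from the image of a $[2,3,7]$-triple in the corresponding quotient. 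Your shortcut is legitimate if you cite the classification of Hurwitz groups of minimal order, but as written it is an unproved appeal; the paper's version is self-contained.

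The genuine gap is in the $|G|=60$ case. You correctly extract the forced types $[2,3,10]$ (from $\alpha=30$) and $[2,5,5]$ (from $\alpha=20$), and you correctly rule out $A_5$ via the absence of order-$10$ elements --- but $A_5$ is the \emph{easiest} of the thirteen isomorphism classes of groups of order $60$, and your claim that ``one checks via the classification (or directly) that none of them carries all of the required spherical systems'' is precisely the content that needs proving; the paper spends several paragraphs eliminating the remaining eleven non-abelian groups one by one (the two abelian ones are excluded by Lemma~\ref{lem:abelian}, which you never invoke). Your closing remark that the rigidity argument works ``once one pins down the group as $A_5$'' is moreover backwards: nothing pins the group down to $A_5$; on the contrary, the order-$10$ element required by $[2,3,10]$ \emph{excludes} $A_5$ and leaves all the others. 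A clean way to close your gap in the spirit of your sketch would be the standard fact that any group of order $60$ other than $A_5$ (equivalently, any non-simple one) has a normal, hence unique, Sylow $5$-subgroup: then in a putative system of type $[2,5,5]$ both order-$5$ generators lie in the unique cyclic subgroup of order $5$, forcing the third generator into that subgroup as well, so the triple can neither close up with an involution nor generate $G$. Without this (or the paper's case-by-case check), your proof of the $|G|=60$ case is an assertion, not an argument.
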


\begin{proof}
    By Lemmata~\ref{lem:max-divisors} and \ref{lem:3-divisors}, we just need to consider groups of order 60 and 168. From Lemma~\ref{lem:3-divisors}, we see that a group of order 168 must then act on a surface of genus~3 and on a surface of genus 22. For a surface of genus 22, we have $\alpha(A)=8$ and so $A\in\{[2,2,2,4], [2,8,8],    [4,4,4]\}$. We recall that there are 57 isomorphism types for  groups of order 168.     There are 56 isomorphism types of groups of order 168 which are solvable     groups, and the final isomorphism type is the simple group $\text{PSL}_2(7)$. 
    
    For the solvable groups~$G$, recall that either the Sylow 3-subgroup is normal or the Sylow 7-subgroup is normal. Indeed, a solvable group has an abelian normal subgroup $N$ of prime order. We may suppose $|N|\ne 3,7$, else we are done.  So $|N|=2$, and from considering $G/N$, we deduce that the Sylow 7-subgroup of $G$ is normal.  Suppose first that the Sylow 7-subgroup is normal. Consider the quotient of $G$ by its Sylow 7-subgroup. As a spherical system of generators for~$G$ is still a spherical system of generators in the quotient, looking at the ramification type $[2,3,7]$, which is the only possibility for an action on a surface of genus 3, if $G$ had a spherical generating system $(g_1,g_2,g_1g_2)$ such that $\{\text{o}(g_1),\text{o}(g_2),\text{o}(g_1g_2)\}=\{2,3,7\}$, then, in the quotient of $G$ by its Sylow 7-subgroup, we would need the generator of order 3 to be the multiplicative inverse of the generator of order~2. Similarly, if the Sylow 3-subgroup were normal, in the corresponding quotient, we would obtain the generator of order 2 being the multiplicative inverse of the generator of order~$7$. Hence, we obtain a contradiction. Thus $G$ here has no action on a surface of genus 3.

    It remains here to consider $G=\text{PSL}_2(7)$.    Recall that the elements of order 2 form one conjugacy class in $\text{PSL}_2(7)$. Hence condition (\ref{sigmacondition}) does not hold.

    Now let $G$ be a group of order 60.     From Lemma~\ref{lem:3-divisors}(i), we must have an action on a surface of genus~3, corresponding to $\alpha(A)=30$ and so $A=[2,3,10]$, and we must have an action on a surface of genus~4, corresponding to $\alpha(A)=20$ and so $A=[2,5,5]$. There are 13 isomorphism classes for a group of order 60, two of which are abelian. These are the cyclic group $Z_{60}$ and $Z_2\times Z_{30}$.  By Lemma~\ref{lem:abelian}, these groups can be excluded. The remaining 11 isomorphism types are then
    \[
    A_5,\, D_{30}, \,Z_3\rtimes \text{Dic}_5,\, Z_5\rtimes \text{Dic}_3,
    \, S_3\times D_5,\,Z_3\times (Z_5\rtimes Z_4),
    \]
    \[
    Z_5\times A_4,\,Z_6\times D_5,\,Z_{10}\times S_3,\, Z_{5}\times\text{Dic}_3,\,Z_{3}\times\text{Dic}_5,
    \]
    where
    \begin{align*}
        \text{Dic}_3&=\langle x,y\mid x^{6}=1,\, y^2=x^3,\, yxy^{-1}=x^{-1}\rangle\\
        \text{Dic}_5&=\langle x,y\mid x^{10}=1,\, y^2=x^5,\, yxy^{-1}=x^{-1}\rangle
    \end{align*}
    and 
    \begin{align*}
    Z_3\rtimes \text{Dic}_5&=\langle a,b \mid a^{30}=1,\, b^2=a^{15},\, bab^{-1}=a^{-1} \rangle\\
       Z_5\rtimes \text{Dic}_3&=\langle  a,b,c \mid a^3=b^5=c^4=1, \,ab=ba, \,cac^{-1}=a^{-1},\, cbc^{-1}=b^3 \rangle,\\
       Z_5\rtimes Z_4&= \langle a,b \mid a^5=b^4=1, \,bab^{-1}=a^3\rangle; 
    \end{align*}
    cf. the database GroupNames~\cite{GroupNames}.

    Now, we will show that there is no generating pairs $(g_1,g_2)$ and $(h_1,h_2)$ of $G$, 
    for each $11$ choices of $G$ above, such that $\{\text{o}(g_1),\text{o}(g_2),\text{o}(g_1g_2)\}=\{2,3,10\}$ and $\{\text{o}(h_1),\text{o}(h_2),\text{o}(h_1h_2)\}=\{2,5,5\}$. For $G=A_5$, as there is no element of order 10, the result is immediate. 

    For $G\in\{Z_3\times (Z_5\rtimes Z_4),\,Z_{3}\times\text{Dic}_5,\,Z_6\times D_5\}$, we focus on the generator of order 3. Since $Z_5\rtimes Z_4$, $D_5$ and $\text{Dic}_5$ have no elements of order 3, we may assume without loss of generality that $g_1$ is of order 3. But then 3 must divide the order of $g_1g_2$. Hence the signature $[2,3,10]$ cannot be attained in these groups. For $G=Z_3\rtimes \text{Dic}_5$, note that the only elements of order 3 and 10 lie in the cyclic subgroup of order 30. Likewise the signature $[2,3,10]$ cannot yield a spherical system of generators.

    For groups $G\in\{Z_5\rtimes \text{Dic}_3,\,Z_5\times A_4,\,Z_{10}\times S_3,\, Z_{5}\times\text{Dic}_3,\,S_3\times D_5\}$, we instead consider an action on a surface of genus 4, where $\alpha(A)=20$ and $A=[2,5,5]$. Let $x$ be an element of order 5. We have that the only generators of order 5 in $G$ are $x$, $x^2$, $x^{3}$ and $x^4$.     So there does not exist a spherical system of generators of $G$ of type  $[2,5,5]$. 
    
    For $G=D_{30}$, writing $a$ for an element of order~30, we likewise have that, for the genus 4 case, we can never have a spherical system of generators of $G$ of type  $[2,5,5]$. This completes the proof. 
\end{proof}

\begin{prop}\label{prop:divisors-2-5}
Suppose that  $|G|$ has  $2$ distinct prime divisors $p$ and $q$. Then $\{p,q\}=\{2,3\}$ and furthermore $|G|\in\{24,48\}$.
\end{prop}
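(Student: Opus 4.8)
The plan is to combine the constraints already obtained with the list of admissible ramification types in Corollary~\ref{Listoforders}, reducing to finitely many candidate orders and then eliminating all but $24$ and $48$. By Lemma~\ref{lem:80} the prime pair $\{2,5\}$ forces $|G|=40$, and by Lemma~\ref{lem:2-divisors} the pair $\{2,3\}$ forces $|G|\in\{24,36,48,72\}$; hence it suffices to exclude the three orders $40$, $36$ and $72$, as this simultaneously rules out the pair $\{2,5\}$ and leaves only $|G|\in\{24,48\}$ for $\{2,3\}$. For each excluded order I would read off, from the proof of Lemma~\ref{lem:2-divisors}, the unique vector $(\alpha(A_1),\ldots,\alpha(A_n))$ and then use Corollary~\ref{Listoforders} to pin down a factor whose ramification type is forced: type $[2,5,5]$ occurs for $|G|=40$, type $[2,3,18]$ for $|G|=36$, and type $[2,3,9]$ for $|G|=72$. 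Since every factor of an element of $\mathcal S_0((-2)^n,n)$ must carry a spherical generating system of its prescribed type, it is enough to show that even a single one of these three types cannot be realised.

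The orders $40$ and $36$ I would dispatch by one elementary remark: in a spherical triple $(h_1,h_2,h_3)$ one has $h_1=(h_2h_3)^{-1}$, so $h_1$ lies in every subgroup containing $h_2$ and $h_3$. For $|G|=40=2^3\cdot 5$ with type $[2,5,5]$, Sylow's theorem ($n_5\mid 8$ and $n_5\equiv 1\pmod 5$) gives $n_5=1$, so the Sylow $5$-subgroup is a normal $Z_5$ containing all order-$5$ elements; then $h_2,h_3\in Z_5$ forces $h_1\in Z_5$, which is absurd as $Z_5$ has no involution. For $|G|=36$ with type $[2,3,18]$, the order-$18$ element $h_3$ generates a normal cyclic subgroup $C\cong Z_{18}$ of index $2$; the image of $h_2$ in $G/C\cong Z_2$ is trivial since $\gcd(3,2)=1$, so $h_2\in C$, whence $h_1\in C$ too and the triple generates only $C\ne G$. (By Lemma~\ref{lem:abelian} the group $G$ is in any case non-abelian, but these arguments do not need this.)

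The order $72$ is the crux, and here I would argue via $(2,3,9)$-generation. A type-$[2,3,9]$ system realises $G=\langle h_1,h_2\rangle$ with $h_1h_2$ of order $9$, so the Sylow $3$-subgroup is cyclic, $P\cong Z_9$, and $n_3\in\{1,4\}$. The case $n_3=1$ is immediate: then $P$ is normal and contains all $3$-elements, so $h_2\in P$ and therefore $h_1=(h_2h_3)^{-1}\in P$, impossible. The difficult case is $n_3=4$, where $P$ is not normal and hence $|O_3(G)|\in\{1,3\}$. I would pass to $\bar G=G/O_3(G)$. If $|O_3(G)|=3$ then $\bar G$ has order $24$ and $\bar h_1,\bar h_2$ form a $(2,3,3)$-generating system, so $\bar G$ is a quotient of the tetrahedral group $\Delta(2,3,3)\cong A_4$ and thus has order dividing $12$, contradicting $|\bar G|=24$. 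If $O_3(G)=1$ then, as $G$ is solvable of order $2^3\cdot 3^2$ by Burnside and the Fitting subgroup is self-centralising in solvable groups, $F(G)$ is a non-trivial $2$-group $T$ with $C_G(T)=Z(T)\le T$, so $G/Z(T)$ embeds in $\operatorname{Aut}(T)$; checking $|T|\in\{2,4,8\}$ shows this embedding is impossible in every case (either for order reasons or because $9\nmid|\operatorname{Aut}(T)|$). Hence no group of order $72$ is $(2,3,9)$-generated.

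I expect the subcase $n_3=4$ of the order-$72$ analysis to be the main obstacle, since Sylow counting alone leaves many apparent possibilities and one must bring in the $(2,3,3)$-triangle-group reduction together with the Fitting-subgroup argument above. A quicker, if less self-contained, alternative would be to invoke \cite[Table~5]{Broughton91}, from which one reads off that no automorphism group of a genus-$3$ surface has order divisible by $72$, immediately excluding $|G|=72$. Assembling the three exclusions yields $\{p,q\}=\{2,3\}$ and $|G|\in\{24,48\}$, as required.
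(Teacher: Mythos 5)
Your proof is correct and follows the paper's overall skeleton---reduce via Lemmata~\ref{lem:2-divisors} and \ref{lem:80} to excluding the orders $40$, $36$ and $72$, each through the single ramification type forced by Corollary~\ref{Listoforders} ($[2,5,5]$, $[2,3,18]$, $[2,3,9]$ respectively)---but you execute two of the three exclusions by genuinely different means. For $|G|=36$ your argument (the order-$18$ element generates an index-$2$, hence normal, cyclic subgroup that absorbs the whole spherical triple) matches the paper's. For $|G|=40$, the paper lists all eleven non-abelian groups of order $40$ from \cite{GroupNames} and inspects that each has a unique cyclic subgroup of order $5$; your Sylow count $n_5\mid 8$, $n_5\equiv 1\pmod 5$, so $n_5=1$, delivers that uniqueness uniformly in one line and avoids the database entirely. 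For $|G|=72$, where the paper simply cites \cite[Table~5]{Broughton91} to exclude an action on a genus-$3$ surface, you prove self-containedly that no group of order $72$ is $(2,3,9)$-generated, and each step checks out: the order-$9$ element forces $P\cong Z_9$ and $n_3\in\{1,4\}$; if $n_3=1$ the normal $P$ contains $h_2,h_3$ and hence $h_1=(h_2h_3)^{-1}$, impossible for an involution; if $|O_3(G)|=3$ the order-$24$ quotient satisfies $\bar h_1^2=\bar h_2^3=(\bar h_1\bar h_2)^3=1$ (the last since $\gcd(9,24)=3$), so it is a quotient of the order-$12$ von Dyck group of signature $(2,3,3)$, isomorphic to $A_4$, a contradiction; and if $O_3(G)=1$ then $F(G)=O_2(G)=:T$ is a non-trivial $2$-group with $C_G(T)=Z(T)$, so $9$ divides $|G/Z(T)|$, yet $9\nmid|\aut(T)|$ for every group $T$ of order $2$, $4$ or $8$ (the largest case being $|\aut(Z_2^3)|=168=2^3\cdot 3\cdot 7$); this subcase even shows every group of order $72$ has non-trivial $3$-core, independently of generation. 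What each approach buys: yours is fully self-contained, in the spirit of the paper's own stated preference for avoiding classification tables where feasible, at the cost of a longer case analysis; the paper's is shorter because it leans on Broughton (and the known gap there, closed by Polizzi~\cite{Pol09}, concerns order $48$, not $72$, so its citation is safe). One notational quibble only: what you call $\Delta(2,3,3)\cong A_4$ of order $12$ is the von Dyck (orientation-preserving) triangle group, the full reflection triangle group having order $24$; your use of the order-$12$ bound is the correct one.
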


\begin{proof}
   By Lemmata~\ref{lem:2-divisors} and \ref{lem:80}, we only need to consider groups of order $40$  to prove the first statement. For a group of order 40, it must act on a surface of genus 3 and of genus~6; cf. the proof of Lemma~\ref{lem:2-divisors}. It suffices to show that $G$ does not act on a surface of genus~3.  Here we have $\alpha(A)=20$ and so $A=[2,5,5]$. With reference to the database~\cite{GroupNames}, the isomorphism types for the non-abelian groups of order 40 are:
    \[
    D_{20},\,\text{Dic}_{10},\,Z_5\rtimes D_4,\, Z_5\rtimes Z_8,\, Z_5\rtimes_2 Z_8,\,  Z_5\rtimes(Z_2\times Z_4),\,Z_4\times D_5,
    \]
    \[
    Z_5\times D_4,\,Z_2^2\times D_5,\,Z_5\times Q_8,\,Z_2\times\text{Dic}_{5},
 \]
    where 
    \begin{align*}
    \text{Dic}_{10}&=\langle a,b \mid a^{20}=1, b^2=a^{10}, bab^{-1}=a^{-1}\rangle,\\
    Z_5\rtimes D_4&=\langle a,b,c \mid  a^5=b^4=c^2=1,\, bab^{-1}=cac=a^{-1},\, cbc=b^{-1}\rangle,\\
        Z_5\rtimes Z_8&=\langle  a,b \mid a^5=b^8=1,\, bab^{-1}=a^3 \rangle,\\
      Z_5\rtimes_2 Z_8&=\langle a,b \mid a^5=b^8=1,\, bab^{-1}=a^{-1}\rangle,\\
   Z_5\rtimes(Z_2\times Z_4)&= \langle  a,b,c \mid a^2=b^5=c^4=1, \,ab=ba, \,ac=ca, \,cbc^{-1}=b^3  \rangle,\\
    Z_2\times\text{Dic}_{5}&= \langle  a,b,c \mid a^2=b^{10}=1, \,c^2=b^5,\, ab=ba,\, ac=ca, \,cbc^{-1}=b^{-1}\rangle.
    \end{align*} By inspection, in all the groups above there is only one cyclic subgroup of order~5. So type $[2,5,5]$ is not possible for a spherical system of generators.

    For the final statement, it suffices to show that orders 36 and 72 are not possible. For $|G|=36$, the  proof of Lemma~\ref{lem:2-divisors} shows that there must be an  action on a surface of genus~3 and on a surface of genus~4, and for $|G|=72$, on surfaces of genus~3 and of genus~10. For $G$ of order 36, the ramification type corresponding to genus 3 is $[2,3,18]$, and so $G$ must have a normal cyclic subgroup of order~$18$. Since then there is only one subgroup of order~3, which is a cyclic subgroup of the cyclic subgroup of order~18, there is no spherical system of generators of type $[2,3,18]$. For groups of order 72, note that we require an action on a surface of genus~3. However by \cite[Table~5]{Broughton91}, there is no such action. 
\end{proof}

\begin{prop}\label{prop:prime-power-no}
The order of $G$ is not a prime power. 
\end{prop}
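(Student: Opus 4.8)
By Lemma~\ref{lem:one-prime-divisor}, the only remaining possibilities are $|G|=2^4$ and $|G|=2^5$, and in both cases the prime is $2$. The plan is to eliminate each of these in turn by combining the constraints on the $\alpha(A_i)$ from that lemma with the explicit list of admissible ramification types in Corollary~\ref{Listoforders}, and then showing that the free-action condition~\eqref{sigmacondition} cannot be satisfied. The decisive structural input will be that a $2$-group always has nontrivial centre, and more specifically that the Frattini quotient $G/\Phi(G)$ is an elementary abelian $2$-group in which relations among generators can be read off modulo $\Phi(G)$.

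First I would handle $|G|=2^4$. Here Lemma~\ref{lem:one-prime-divisor}(a) forces $\alpha(A_i)=2^3=8$ for all four factors, so by Corollary~\ref{Listoforders} each ramification type $A_i$ lies in $\{[2,2,2,4],[2,8,8],[4,4,4]\}$. Since $G$ has order $16$, an element of order $8$ would make $G$ have a cyclic subgroup of index $2$; I would note that the groups of order $16$ admitting a spherical system of type $[2,8,8]$ or containing such elements are very restricted (essentially $Z_2\times Z_8$, $D_8$, $Q_{16}$, $SD_{16}$, $M_4(2)$), and in each such case the unique central involution—or the square of every order-$8$ element—lies in every $\Sigma(T_i)$, so~\eqref{sigmacondition} fails. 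The main content is the type $[4,4,4]$: here $G$ is a quotient of the triangle-type group generated by three order-$4$ elements whose product is trivial, which together with $|G|=16$ pins $G$ down to a short list of $2$-groups. For each I would argue as in the abelian case treated in Lemma~\ref{lem:abelian}: passing to $G/\Phi(G)\cong Z_2^k$, a spherical relation of type $[4,4,4]$ forces the three generators to satisfy a fixed pattern modulo $\Phi(G)$, which in turn forces a fixed nontrivial square (a specific central involution) to appear in $\Sigma(T_i)$ for every $i$, violating~\eqref{sigmacondition}.

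Next I would treat $|G|=2^5$. By Lemma~\ref{lem:one-prime-divisor}(b),(c) at least one factor has $\alpha(A_i)=2^4=16$, whose unique type in Corollary~\ref{Listoforders} is $[2,4,8]$, and the remaining factors have $\alpha(A_i)\in\{8,16\}$, i.e.\ types among $[2,4,8]$ and $\{[2,2,2,4],[2,8,8],[4,4,4]\}$. The presence of a factor of type $[2,4,8]$ means $G$ (of order $32$) is generated by an element of order $2$, one of order $4$, and one of order $8$ multiplying to $1$; in particular $G$ has an element of order $8$, so its cyclic, dihedral, quaternion, semidihedral or modular structure is heavily constrained. As in the genus-$2$ analysis of Case~(e) in Proposition~\ref{genus>2}, the key observation is that in all these order-$32$ candidates the square of an order-$8$ element and the relevant involutions collapse onto a single central involution that must then belong to $\Sigma(T_i)$ for every factor, so again~\eqref{sigmacondition} cannot hold.

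The hard part will be the bookkeeping for the type $[4,4,4]$ at order $16$ and for $[2,4,8]$ at order $32$: one must be sure to enumerate \emph{all} $2$-groups admitting the relevant spherical generating systems, rather than a convenient subset, and then verify in each that some fixed central involution is forced into the intersection $\bigcap_i\Sigma(T_i)$. I expect the cleanest uniform device to be reduction modulo the Frattini subgroup, which turns the spherical relations into linear constraints over $\FF_2$ and makes transparent that a common central involution always survives; this is exactly the mechanism already used for $Z_4\times Z_4$ in the proof of Lemma~\ref{lem:abelian}, and I would extend it to the non-abelian $2$-groups of orders $16$ and $32$ on the list.
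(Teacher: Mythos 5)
There is a genuine gap. Your proof follows the same skeleton as the paper's (reduce via Lemma~\ref{lem:one-prime-divisor} to $|G|\in\{16,32\}$, read off the admissible types from Corollary~\ref{Listoforders}, then violate condition~\eqref{sigmacondition}), but the decisive step --- determining \emph{which} groups of orders $16$ and $32$ actually admit the relevant spherical generating systems --- is exactly the step you leave as a plan, and the parts you do sketch contain errors. The paper does not enumerate these $2$-groups by hand: since $\alpha(A_i)=8$ (resp.\ $16$) forces an action on a surface of genus~$3$, it invokes the classification of genus-$3$ actions in \cite[Table~5]{Broughton91} (corrected by \cite[Table~3]{Pol09}), which reduces the problem at once to $D_{2,8,5}$ with type $[2,8,8]$ and $D_{4,4,-1}$ with type $[4,4,4]$ at order~$16$, and to $Z_2\ltimes(Z_2\times Z_8)$ and $Z_2\ltimes D_{2,8,5}$, both with type $[2,4,8]$, at order~$32$; each is then killed by a short concrete computation.

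Three specific problems with your plan. First, at order $16$ you never address the type $[2,2,2,4]$, and it is not vacuous for non-abelian groups: in $Z_2\times D_4$ with $D_4=\langle r,s\mid r^4=s^2=1,\,srs=r^{-1}\rangle$ and $Z_2=\langle x\rangle$, the system $\bigl((s,1),(rs,1),(1,x),(r,x)\bigr)$ has type $[2,2,2,4]$, has trivial product, and generates, so this case genuinely requires an argument (the paper disposes of such groups via Broughton's table). Second, your structural claim at order $32$ is false: an element of order $8$ in a group of order $32$ generates a subgroup of index $4$, not $2$, so $G$ is by no means restricted to cyclic, dihedral, quaternion, semidihedral or modular types --- indeed the two groups that actually occur, $Z_2\ltimes(Z_2\times Z_8)$ and $Z_2\ltimes D_{2,8,5}$, are on neither of your lists, and the paper defeats them not through a single central involution but through the observation that each contains exactly two cyclic subgroups of order $8$, which are conjugate. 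Third, the Frattini reduction proves too little for $[4,4,4]$: modulo $\Phi(G)$ the relation only says the three generators map to the three nonzero vectors of $G/\Phi(G)\cong Z_2^2$, and which central involution occurs as a square is invisible at that level --- the paper has to split the twelve order-$4$ elements of $D_{4,4,-1}$ explicitly by their squares. Moreover, since the $n$ factors may a priori carry \emph{different} types, your per-type arguments must produce one and the same nontrivial element of $\bigcap_i\Sigma(T_i)$; Broughton's classification sidesteps this by attaching a unique admissible type to each group, whereas your proposal never confronts the mixed-type combinations.
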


\begin{proof}
 By Lemma~\ref{lem:one-prime-divisor}, it suffices to show that $|G|$ cannot be 16 or 32. Suppose first that $|G|=16$. The proof of Lemma~\ref{lem:one-prime-divisor} yields $\alpha(A_i)=8$ for all $i$, and so $G$ acts on a surface of genus~3. From \cite[Table~5]{Broughton91}, up to isomorphism we only need to consider:
\begin{enumerate}
    \item [(a)] $G=D_{2,8,5}=\langle x,y\mid x^2=y^8=1, xyx^{-1}=y^5\rangle$, with ramification type $[2,8,8]$;
      \item [(b)]  $G=D_{4,4,-1}=\langle x,y\mid x^4=y^4=1, xyx^{-1}=y^{-1}\rangle$, with ramification type $[4,4,4]$.
\end{enumerate}
Indeed,  all other options for $G$ listed in \cite[Table~5]{Broughton91} are either abelian or clearly do not satisfy condition~\eqref{sigmacondition}. For (a), it is clear that condition~\eqref{sigmacondition} does not hold, since all cyclic subgroups of order 8 intersect non-trivially. For (b), note that the elements of order 4 are 
\[
x,\,x^3,\,xy^2,\,x^3y^2,\,xy,\,xy^3,\,x^3y,\,x^3y^3,\quad\text{all of which have square $x^2$,}
\]
and 
\[
y,\,y^3,\,x^2y,\,x^2y^3,\quad\text{all of which have square $y^2$.}
\]
Hence we see that \eqref{sigmacondition} does not hold, since all spherical systems of generators with type $[4,4,4]$ must contain a generator which has square $x^2$.

For $|G|=32$, we likewise note from the proof of Lemma~\ref{lem:one-prime-divisor} that $\alpha(A_i)=16$ for some $i$, and thus $G$ acts on a surface of genus~3. From \cite[Table~5]{Broughton91}, up to isomorphism we have the following options only:
\begin{enumerate}
    \item [(a)] $G=Z_2\ltimes (Z_2\times Z_8)=\langle x,y,z\mid x^2=y^2=z^8=[y,z]=[x,y]=1, \,xzx=yz^3\rangle$, with ramification type $[2,4,8]$; 
      \item [(b)] $G=Z_2\ltimes D_{2,8,5}=\langle x,y,z\mid x^2=y^2=z^8=1, \,yzy=z^5,\,xyx=yz^4,\,xzx=yz^3\rangle$, with ramification type $[2,4,8]$. 
\end{enumerate}
In both groups there are only two cyclic subgroups of order~8, namely $\langle z\rangle$ and $\langle yz^3\rangle$, which are clearly conjugate to each other.  Hence we see that condition (\ref{sigmacondition}) does not hold.
\end{proof}

\begin{thm}\label{thm:24}
      The order of $G$ is not $24$.
\end{thm}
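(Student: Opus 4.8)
The plan is to start from Proposition~\ref{prop:divisors-2-5}, which leaves only $|G|\in\{24,48\}$ when $|G|$ has two prime divisors, and to use the fine analysis in the proof of Lemma~\ref{lem:2-divisors}: for $|G|=24=2^3\cdot 3$ one is forced into the combination $\alpha(A_1)=2^3=8$ and $\alpha(A_2)=\alpha(A_3)=\alpha(A_4)=2^2\cdot 3=12$ with $n=4$. By Corollary~\ref{orderG} and Lemma~\ref{Theta+alpha}(iii) this gives $g(C_i)-1=|G|/\alpha(A_i)$, hence the genera vector $(4,3,3,3)$. Thus $G$ must act on a genus-$4$ surface with $\alpha=8$ and on three genus-$3$ surfaces with $\alpha=12$, all with quotient $\PP_1(\CC)$. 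By Corollary~\ref{Listoforders} the admissible ramification types are $[2,2,2,4],[2,8,8],[4,4,4]$ in genus $4$ and $[2,2,2,3],[2,6,6],[3,3,6],[3,4,4]$ in genus $3$. By Lemma~\ref{lem:abelian}, $G$ is one of the twelve non-abelian groups of order $24$, which I would dispatch in several families.

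The first family consists of the groups with a unique involution $z$ (necessarily central), namely $Z_3\rtimes Z_8$, $\SL_2(3)$, $\text{Dic}_6$ and $Z_3\times Q_8$. Every even-order element $h$ of order $2m$ satisfies $h^m=z$, so whenever an even-order element occurs as a generator of $T_i$ we get $z\in\Sigma(T_i)$. Each genus-$4$ type consists entirely of even-order elements, and each of the four genus-$3$ types contains at least one even-order generator; hence $z\in\bigcap_i\Sigma(T_i)$, contradicting (\ref{sigmacondition}).

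The second family consists of groups admitting no generating system of any genus-$4$ type. For $Z_2\times A_4$ and $Z_2^2\times S_3$ there is no element of order $4$, so none of the three genus-$4$ types can occur. For $Z_4\times S_3$ and $Z_2\times\text{Dic}_3$ I would use a surjection $\phi\colon G\to Z_2$ sending every order-$4$ element to the non-trivial class and every element of order at most $2$ to $0$; since a genus-$4$ system has one or three order-$4$ generators and the rest involutions, its $\phi$-image is non-trivial, contradicting that the generators multiply to the identity. For $Z_3\times D_4$ the projection $G\to Z_3$ kills all $2$-elements, so a genus-$4$ system (made up solely of $2$-elements) cannot generate $G$. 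The group $Z_3\rtimes D_4$ is the group already met in Case~(f) of Proposition~\ref{genus>2}, and by Broughton's classification \cite[Table~5]{Broughton91} it does not act on a surface of genus $3$ at all, so it is excluded immediately. The dihedral group $D_{12}$ I would treat via its central involution $r^6$: every order-$4$ and every order-$6$ element has $r^6$ as a power, and a short parity count shows a type $[2,2,2,3]$ can generate $D_{12}$ only if one of its involutory generators is $r^6$ itself; hence $r^6\in\bigcap_i\Sigma(T_i)$.

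The main obstacle is $S_4$, which has elements of all the relevant orders and genuinely acts on both genus-$3$ and genus-$4$ surfaces. Here I would exploit the normal Klein four-subgroup $V$, whose three non-trivial elements are the double transpositions, forming a single conjugacy class and each being the square of a $4$-cycle. Since $S_4$ has no element of order $6$ or $8$, the genus-$4$ type is $[2,2,2,4]$ or $[4,4,4]$ and the genus-$3$ type is $[2,2,2,3]$ or $[3,4,4]$. In $[2,2,2,4]$, $[4,4,4]$ and $[3,4,4]$ there is a $4$-cycle generator, whose square together with its conjugates gives $V\setminus\{1\}\subseteq\Sigma(T_i)$. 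For $[2,2,2,3]$ a parity argument finishes it: the product of a $3$-cycle with three involutions is even only if an even number of those involutions are transpositions, and if none is a transposition the generated group is contained in $A_4$; hence exactly one generator is a double transposition, again forcing $V\setminus\{1\}\subseteq\Sigma(T_i)$. Therefore $V\setminus\{1\}\subseteq\bigcap_{i=1}^4\Sigma(T_i)$, so (\ref{sigmacondition}) fails and the proof is complete. I expect the $S_4$ case to require the most care, precisely because it is the only candidate not ruled out by a crude order or generation obstruction.
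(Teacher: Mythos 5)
Your proposal is correct and follows essentially the same route as the paper's proof: reduce via the proof of Lemma~\ref{lem:2-divisors} to $n=4$ with genera $(4,3,3,3)$, enumerate the admissible ramification types from Corollary~\ref{Listoforders}, and run through the twelve non-abelian groups of order $24$ using generation obstructions and condition~\eqref{sigmacondition}. In particular your treatment of the hardest case $S_4$ (the double transpositions form a single conjugacy class, each is the square of a $4$-cycle, and a sign argument forces a double transposition into any generating system of type $[2,2,2,3]$) is the paper's argument almost verbatim, and your $D_{12}$ parity count matches as well.

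One slip to fix before writing up: your genus-$3$ list is incomplete. Corollary~\ref{Listoforders} also gives $[2,4,12]$ with $\alpha=12$, so the admissible genus-$3$ types are $[2,2,2,3]$, $[2,4,12]$, $[2,6,6]$, $[3,3,6]$, $[3,4,4]$ (your $[3,3,6]$ is right; the paper's list prints $[3,6,6]$, which has $\alpha=6$ and is evidently a typo for $[3,3,6]$). Fortunately the omission is harmless in every one of your cases, but you must say so explicitly: in your unique-involution family ($Z_3\rtimes Z_8$, $\SL_2(3)$, $\mathrm{Dic}_6$, $Z_3\times Q_8$) the type $[2,4,12]$ consists entirely of even-order elements, so $z\in\Sigma(T_i)$ still holds; your second family is excluded by the genus-$4$ analysis alone, so genus-$3$ types are irrelevant there; $Z_3\rtimes D_4$ is excluded before any type analysis; in $D_{12}$ the order-$4$ (and order-$12$) generators power to $r^6$; and $S_4$ has no element of order $12$. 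Beyond that, your divergences from the paper are organizational rather than methodological: you treat the four unique-involution groups uniformly (the paper handles $Z_3\times Q_8$ instead by noting a spherical system needs two generators of order divisible by $3$, ruling out genus $4$); you kill the genus-$4$ actions of $Z_2\times\mathrm{Dic}_3$ by a $Z_2$-quotient parity argument (the paper allows the types and shows all surviving systems contain a power equal to the central involution $x^3$); and you dispatch $Z_3\rtimes D_4$ by citing Broughton's classification, which is legitimate --- the paper itself invokes \cite[Table~5]{Broughton91} for exactly this group in Case~(f) of the proof of Proposition~\ref{genus>2} --- whereas in Theorem~\ref{thm:24} the paper gives a self-contained argument forcing a conjugate of $xz$ into every relevant system. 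Each of these shortcuts checks out.
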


\begin{proof}
Suppose otherwise. We see from the proof of Lemma~\ref{lem:2-divisors}(ii) that $G$ must act on a surface of genus~3 and on a surface of genus 4. As given in~\cite{GroupNames}, the non-abelian groups of order 24, up to isomorphism, are the following 12 groups:
    \[
    S_4,\, D_{12},\, \text{Dic}_6,\,\text{SL}_2(3),\,Z_3\rtimes D_4,\,Z_3\rtimes Z_8,
    \]
    \[
   Z_2\times A_4,\,Z_4\times S_3,\,Z_3\times D_4,\,Z_2^2\times S_3,\,Z_3\times Q_8,\,Z_2\times \text{Dic}_3.
    \]
    For a surface of genus 3 on which $G$ acts on, the corresponding $\alpha(A)$ is 12. Hence the possible ramification types for $A$ are
    \[
      [2,2,2,3],\quad [2,4,12],\quad [2,6,6],\quad [3,6,6],\quad [3,4,4].
    \]
    Similarly for a surface of genus 4, the possible ramification types are
    \[
      [2,2,2,4],\quad [2,8,8],      \quad [4,4,4].
    \]

Suppose $G=\text{SL}_2(3)$. Since there is only one element of order 2, we clearly see that regardless of the ramification types for the surfaces of genus 3 and 4, we always have that condition (\ref{sigmacondition}) does not hold, since the intersection of the $\Sigma(T_i)$ contains the element of order~2.

Suppose $G\in\{Z_3\times D_4,\,Z_3\times Q_8\}$. Observe that any generating set for $G$ has to contain an element of order divisible by 3. Furthermore a spherical system of generators for $G$ has to contain at least two generators with orders divisible by 3. Hence there is no action on a surface of genus 4. 

Suppose $G=Z_2\times \text{Dic}_3$. Recall that $\text{Dic}_3=\{1,x,x^2,\ldots,x^5,y,xy,x^2y,\ldots,x^5y\}$. For $\text{Dic}_3$, note that all elements of  order~4 have the form $x^ky$, for $k\in\{0,1,\ldots,5\}$, and the only element of order 2 is central. As $G$ has no elements of order 8 or 12, the possible ramification types for genus~3 are $[2,2,2,3]$, $[2,6,6]$, $[3,6,6]$, or $[3,4,4]$, and the possibilities for genus 4 are $[2,2,2,4]$ or $[4,4,4]$. Since all elements of order~2 are central, the ramification types $[2,2,2,3]$, $[2,6,6]$, $[3,6,6]$ can never correspond to a generating system for $G$. All remaining ramification types contain an element of order 4, whose square is $x^3$. Thus (\ref{sigmacondition}) does not hold.

Suppose $G\in\{Z_2\times A_4,Z_2^2\times S_3\}$, which  has no elements of order 4. Hence there are no possible ramification types for an action of $G$ on a surface of genus 4.

Suppose $G=Z_4\times S_3$. Here $G$ has no element of order 8, so  the possible ramification types for genus 4 are $[2,2,2,4]$ or $[4,4,4]$. However, since the ramification type of a spherical system of generators of $G$ must have an even number of components being divisible by~4, we see that there is no action of $G$ on a surface of genus 4.
    
     For $G=S_4$, since there are no elements of order 6, 8 or 12, the ramification types for genus~3 are
   $[2,2,2,3]$ or $[3,4,4]$, and for  genus 4 they are 
    $[2,2,2,4]$ or $[4,4,4]$.  Since there is only one conjugacy class for the square of an element of order 4, for at least one surface of genus 3, the corresponding ramification type associated to it has to be  $[2,2,2,3]$, otherwise it is immediate that (\ref{sigmacondition}) does not hold. However, for $[2,2,2,3]$ to correspond to a spherical system of generators, since a 3-cycle is an even permutation, at least one of the generators of order 2 has to be a product of two transpositions, which is a square of a 4-cycle. Hence (\ref{sigmacondition}) never holds.

    For $G=Z_3\rtimes Z_8=\langle x,y\mid x^3=y^8=1,\,yxy^{-1}=x^{-1}\rangle$, note that $y^2$ and $y^6$ are the only elements of order 4 and $y^4$ is the only element of order~2.     Hence (\ref{sigmacondition}) never holds for this group.
    
    For $G=Z_3\rtimes D_4=\langle x,y,z\mid x^3=y^4=z^2=1,\,yxy^{-1}=zxz=x^{-1},\, zyz=y^{-1}\rangle$, as the only elements of order~$4$ are $y$ and $y^{-1}$, and as there are no elements in~$G$ of order ~$8$, the ramification type for a surface of genus~$4$ must be $[2,2,2,4]$. Likewise, as the only elements of order~$6$ are $xy^2$ and $x^2y^2$, and as there are no elements of order~$12$ in~$G$, the possible ramification types for a surface of genus~$3$ are $[2,2,2,3]$. Indeed, no spherical generating system for~$G$ can have type $[3,4,4]$, since $z$ then cannot be generated. Additionally the product of 2 elements of order~$6$ is either $1$, $x$ or $x^2$. Thus the types $[2,6,6]$ and $[3,6,6]$ do not give spherical generating systems. To finish this case, we observe that for $[2,2,2,4]$ and $[2,2,2,3]$ to be generating sets, at least one element of order~2 must be $xz$ or $x^2z$, which are conjugates of each other. Hence (\ref{sigmacondition}) never holds here.

For $G=\text{Dic}_6=\langle x,y\mid x^{12}=1, y^2=x^6,\,yxy=x^{-1}\rangle$, as there is only one element of order~2, which is central, this group cannot occur here.
    
    Finally for $G=D_{12}=\langle x,y\mid x^{12}=y^2=1,\,yxy=x^{-1}\rangle$, since all elements of orders 3, 4, and 6 lie in the unique cyclic subgroup of order~$12$, the possible ramification types for a surface of genus~3 are $[2,2,2,3]$, $[2,4,12]$ and $[2,6,6]$, and for genus~4 the only possibility is $[2,2,2,4]$. As furthermore the cyclic subgroups of orders 4 and 6 intersect non-trivially, for at least one action on a surface of genus~3 the ramification type is $[2,2,2,3]$. Observe that since the types must correspond to a spherical generating set, we have that at least one element of order~2 for each type is of the form $x^ky$ for some $k\ne 0$. As for any word representing a given element, the total exponent of $y$ modulo~2 is constant, we can only have exactly two generators of order~2 being a word of the form $x^\ell y$ for some $\ell$. Hence in both ramification types $[2,2,2,3]$ and $[2,2,2,4]$ there is one generator which is the central element $x^6$ of order~$2$. So (\ref{sigmacondition}) does not hold, and this completes the proof.
\end{proof}

We remark that  Broughton~\cite[Table~5]{Broughton91} showed that there are only four possibilities for~$G$, if $|G|=24$ and if $G$ acts on a surface of genus 3. These are  $S_4, D_{2,12,5}, Z_2\times A_4$, and $\text{SL}_2(3)$, and Broughton also gives the possible options for the ramification types. Hence in the above proof, one could reduce to considering just these four groups and the corresponding allowed ramification types. 

\begin{thm}\label{thm:48}
The order of $G$ is not $48$.
\end{thm}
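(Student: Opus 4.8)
The plan is to follow the template of the proof of Theorem~\ref{thm:24}, reducing to a single candidate group and exhibiting a fixed non-trivial element lying in every $\Sigma(T_i)$. First I would read off, using Lemma~\ref{lem:2-divisors}(iv) together with the relation $g(C_i)-1=|G|/\alpha(A_i)=48/\alpha(A_i)$ from Corollary~\ref{orderG} and Lemma~\ref{Theta+alpha}(iii), exactly which surfaces $G$ must act on. The $\alpha$-combinations found in the proof of Lemma~\ref{lem:2-divisors} force $G$ to act on a surface of genus $4$ (with $\alpha=16$, hence ramification type $[2,4,8]$ by Corollary~\ref{Listoforders}) and on at least two surfaces of genus $3$ (with $\alpha=24$, hence type in $\{[2,3,12],[2,4,6],[3,3,4]\}$); when $n=4$ there is in addition one surface of genus $5$ (with $\alpha=12$, hence type in $\{[2,2,2,3],[2,4,12],[2,6,6],[3,3,6],[3,4,4]\}$).

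The genus-$4$ requirement is the decisive constraint, since the type $[2,4,8]$ forces $G$ to contain an element of order $8$. Appealing to Broughton's classification \cite[Table~5]{Broughton91} of the groups of order $48$ admitting a genus-$3$ action, and intersecting with the condition that $G$ possess an element of order $8$, I expect the list to reduce to the two stem double covers of $S_4$: the group $\text{GL}_2(3)$ (with semidihedral Sylow $2$-subgroup of order $16$) and the binary octahedral group (with generalized quaternion Sylow $2$-subgroup). The latter is excluded immediately by passing to the quotient $S_4$: its only involution is central, so in a system of type $[2,4,8]$ the generator of order $2$ maps to the identity while the generators of orders $4$ and $8$ map to elements of orders $2$ and $4$ in $S_4$, and these three images cannot have product the identity. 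Hence the only surviving candidate is $G=\text{GL}_2(3)$.

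For $G=\text{GL}_2(3)$ put $z=-I$ for the central involution. The core structural fact, read off from the semidihedral Sylow $2$-subgroup, is that \emph{every} element of order $4$ squares to $z$ and every element of order $8$ has fourth power $z$; moreover, since a non-central involution has a centralizer of order $4$ containing no element of order~$3$, every element of order $6$ cubes to $z$. Consequently, for the genus-$4$ factor (type $[2,4,8]$), for each genus-$3$ factor (types $[2,4,6]$ or $[3,3,4]$; note $[2,3,12]$ is excluded as $G$ has no element of order $12$), and for a genus-$5$ factor of type $[2,6,6]$, $[3,3,6]$ or $[3,4,4]$ (with $[2,4,12]$ again excluded), the element $z$ is literally a power of one of the spherical generators, so $z\in\Sigma(T_i)$ in each of these cases.

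The one residual case, which I expect to require the most care, is the genus-$5$ type $[2,2,2,3]$, where $z$ need not be a power of any generator. Here I would invoke the determinant homomorphism $\det\colon\text{GL}_2(3)\to\FF_3^{\times}=\{\pm1\}$: an element of order $3$ has determinant $1$, and an involution has determinant $1$ exactly when it equals $z$, the non-central involutions having determinant $-1$. Writing a system of this type as $t_1t_2t_3u=1$ with the $t_i$ of order~$2$ and $u$ of order~$3$, comparison of determinants forces an even number of the $t_i$ to be non-central; since the three $t_i$ cannot all equal $z$ (their product would then be $z$, of order $2$, rather than $u^{-1}$ of order $3$), exactly one $t_i$ equals $z$, so again $z\in\Sigma(T_i)$. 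Collecting all factors gives $z\in\bigcap_i\Sigma(T_i)\neq\{1_G\}$, contradicting condition~\eqref{sigmacondition} and completing the proof.
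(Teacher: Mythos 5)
There is a genuine gap at the very first step: your enumeration of the $\alpha$-combinations for $|G|=48$ is incomplete. The proof of Lemma~\ref{lem:2-divisors} yields \emph{three} admissible combinations, and you have only the two containing $\alpha(A_1)=2^4$. The third is $\alpha(A_1)=2^3$, $\alpha(A_2)=\alpha(A_3)=\alpha(A_4)=2^3\cdot 3$ (this is \eqref{eq:third} in the paper), which corresponds to actions on surfaces of genus $3$ and genus $7$ only --- there is no genus-$4$ factor at all in this case. Consequently your ``decisive constraint'' that $G$ must act with type $[2,4,8]$, and hence contain an element of order $8$, simply does not apply to this combination, and your pruning of the group list by the existence of an order-$8$ element wrongly discards exactly the groups that require real work. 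Indeed, by \cite[Table~3]{Pol09} (which corrects \cite[Table~5]{Broughton91}), the groups of order $48$ acting on a genus-$3$ surface are $Z_4^2\rtimes Z_3$ (type $[3,3,4]$), $Z_2\times S_4$ (type $[2,4,6]$), and the group $H$ of type $[2,3,12]$ --- none of which contains an element of order $8$, so all three survive via \eqref{eq:third} and must be excluded by analysing the genus-$7$ types $[2,2,2,4]$ and $[4,4,4]$ (e.g.\ for $Z_2\times S_4$ the squares of order-$4$ elements form a single conjugacy class, violating \eqref{sigmacondition}; for $Z_4^2\rtimes Z_3$ all elements of order $2$ and $4$ lie in the normal subgroup $Z_4^2$, so no generating system of those types exists; for $H$ an explicit conjugacy-class computation is needed).

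A secondary problem: even granting your false premise, the reduction to $\mathrm{GL}_2(3)$ and the binary octahedral group does not match the classification --- neither group appears in the corrected genus-$3$ list for order $48$, so intersecting that list with ``has an element of order $8$'' gives the empty set, not $\mathrm{GL}_2(3)$. Your internal arguments about $\mathrm{GL}_2(3)$ (all order-$4$ elements squaring to $z=-I$, the determinant parity argument for $[2,2,2,3]$) are correct in themselves, but they address a group that never arises; the actual content of the theorem lies in the case \eqref{eq:third} that your proposal omits.
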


\begin{proof}
From the proof of Lemma~\ref{lem:2-divisors}, if $|G|=48$ there are three options for the set of $\alpha(A_i)$'s:
\[
\alpha(A_1)=16,\quad \alpha(A_2)=\alpha(A_3)=24,\quad \alpha(A_4)=12,
\]
or
\[
\alpha(A_1)=16,\quad \alpha(A_2)=\alpha(A_3)=\alpha(A_4)=\alpha(A_5)=24,
\]
or
\begin{equation}
    \label{eq:third}
\alpha(A_1)=8,\quad \alpha(A_2)=\alpha(A_3)=\alpha(A_4)=24.
\end{equation}
In the first, there are actions on surfaces of genera 3, 4 and 5. In the second, only on surfaces of genera 3 and 4, and in the third, on surfaces of genera 3 and 7. From \cite[Table~3]{Pol09}, which accounts for a missing case in \cite[Table~5]{Broughton91}, among the groups of order 48, only $Z_4^2\rtimes Z_3$, $Z_2\times S_4$ and the group
\begin{align*}
H&:=\langle x,y,z,w,t\mid x^2=z^2=w^2=t,\,y^3=t^2=[x,y]=[x,z]=1,\\
&\qquad\qquad\qquad\qquad\qquad\qquad\qquad\qquad\qquad yzy^{-1}=w,\,ywy^{-1}=zw,\,zwz^{-1}=wt\rangle
\end{align*}
can act on a surface of genus 3, with corresponding ramification type $[3,3,4]$, $[2,4,6]$ and $[2,3,12]$ respectively. 

Suppose first that $G=Z_2\times S_4$. Note that $G$ has no element of order 8 or order~12. Hence as the only ramification type associated to $\alpha(A)=16$ is $[2,4,8]$, we see that only the third option~\eqref{eq:third} is possible. Now the possible ramification types associated to genus~7 are $[2,2,2,4]$ or $[4,4,4]$. As the squares of elements of order 4 form one conjugacy class, condition~\eqref{sigmacondition} never holds.

We next consider 
\[
G=Z_4^2\rtimes Z_3=\langle x,y,z\mid x^3=y^4=z^4=[y,z]=1,\, xyx^{-1}=z,\,xzx^{-1}=(yz)^{-1}\rangle.
\]
A direct computation shows that there are only two conjugacy classes of elements of order~3, determined by $x$ and $x^{-1}$ respectively, and each are of size 16. The remaining conjugacy classes consist of one class of elements of order 2 determined by $y^2$, and four classes of elements of order 4 determined by $y$, $yz$, $yz^2$ and $y^{-1}z$ respectively. Each of these five conjugacy classes is of size 3. Certainly here $G$ also has no element of order 8 or 12. Hence only option \eqref{eq:third} needs to be considered. For the action on a surface of genus~7, note that the  ramification types $[2,2,2,4]$ and $[4,4,4]$ are not possible, since all elements of order 2 and of order 4 lie in the normal subgroup  $Z_4^2$. Hence there is no action on a surface of genus~7.

So it remains to deal with the case $G=H$. First observe that the following relations follow from the presentation of~$G$:
\[
[x,t]=[z,t]=[w,t]=1.
\]
We further obtain that $[y,t]=1$. Indeed, from $yz^3y^{-1}=w^3$ we obtain $yzty^{-1}=wt$. Substituting $yz=wy$ into the previous equation gives 
$wyty^{-1}=wt$, which then yields $yt=ty$. Lastly, we deduce from the presentation of~$G$ that 
$[x,w]=1$; indeed, from $yz=wy$ we have
\begin{align*}
  (xwy)^2&=(xyz)^2\\
    &=x^2(yz)^2\\
    &=x^2yzy^{-1}y^{-1}z\\
    &=x^2wy^{-1}z.
\end{align*}
As $(xwy)^2=xwyxwy=xwxywy$, we obtain from the above that
\[
wxywy=xwy^{-1}z.
\]
Recalling that $yz=wy$, we then get
\[
wxy^{-1}z =xwy^{-1}z,
\]
and so $wx=xw$, as claimed.

Note also that
\[
zwz^{-1}=w^{-1},
\]
and since $z^2=w^2=t\in Z(G)$, we further have that
\[
zwz^{-1}=z^{-1}wz,\qquad wzw^{-1}=w^{-1}zw, \qquad zyz^{-1}=z^{-1}yz,  \qquad\textup{and}\qquad   wyw^{-1}=w^{-1}yw.
\]
Next, using the relations $yz=wy$ and $yw=zwy$, 
we compute that
\[
z^{-1}yz=z^{-1}wy=z^2yw=yw^{-1},
\]
and from $zw=w^{-1}z$, 
\[
w^{-1}zw=z^{-1}.
\]
Likewise, from $yw=zwy$, we obtain
\[
w^{-1}yw=w^{-1}zwy=z^{-1}y.
\]

\bigskip

For convenience, we list the non-trivial conjugacy classes in~$G$:
\begin{align*}
    \textup{ccl}(t)&=\{t\}\\
     \textup{ccl}(x)&=\{x\}\\
       \textup{ccl}(xt)&=\{xt\}\\
      \textup{ccl}(z)&=\{z,w,zw,wt,zt,zwt\}\text{ with elements of order }4\\
        \textup{ccl}(xz)&=\{xz,xw,xzw,xwt,xzt,xzwt\}\text{ with elements of order }2\\
       \textup{ccl}(y)&=\{y,ywt, zyt=yzwt, wyt=yzt\}\text{ with elements of order }3\\
         \textup{ccl}(y^{-1})&=\{y^{-1},y^{-1}zw, y^{-1}z, y^{-1}w\}\text{ with elements of order }3\\
          \textup{ccl}(yt)&=\{yt,yw, yzw, yz\}\text{ with elements of order }6\\
          \textup{ccl}(y^{-1}t)&=\{y^{-1}t,y^{-1}zwt, y^{-1}zt, y^{-1}wt\}\text{ with elements of order }6\\
         \textup{ccl}(xy)&=\{xy,xywt, xyzwt, xyzt\}\text{ with elements of order }12\\
           \textup{ccl}(xyt)&=\{xyt,xyw, xyzw, xyz\}\text{ with elements of order }12\\  
             \textup{ccl}(xy^{-1})&=\{xy^{-1},xy^{-1}zw, xy^{-1}z, xy^{-1}w\}\text{ with elements of order }12\\
           \textup{ccl}(xy^{-1}t)&=\{xy^{-1}t,xy^{-1}zwt, xy^{-1}zt, xy^{-1}wt\}\text{ with elements of order }12\\ 
\end{align*}
As there is no element of order~8 in~$G$, there is no action on a surface of genus~4. Indeed, the only possible ramification type for genus 4 is $[2,4,8]$. Therefore we are left with only the option~\eqref{eq:third}, with the possible ramification types for a surface of genus 7 being $[2,2,2,4]$ or $[4,4,4]$. Clearly the type $[4,4,4]$ is impossible, as the only elements of order~4 all lie in the union of the conjugacy class of~$z$ with $\{x,xt\}$, and $y\notin\langle \textup{ccl}(z)\cup \{x,xt\}\rangle$. Similarly, from the above data, we see that a system of ramification type $[2,2,2,4]$ can never be a generating set for~$G$. The proof is now complete.
\end{proof}

\begin{proof}
    [Proof of Theorem \ref{thm:main}]
    This follows immediately from Propositions \ref{prop:divisors-2-5} and \ref{prop:prime-power-no}, and Theorems \ref{thm:order-168}, \ref{thm:24} and \ref{thm:48}.
\end{proof}


\begin{thebibliography}{99}

 \bibitem{BauerCataneseGrunewald08} I.~Bauer,  F.\ Catanese and F.~Grunewald, The classification of surfaces with {$p_g=q=0$} isogenous to a product of curves, \textit{Pure Appl. Math. Q.} \textbf{4 (2)} (2008), 547--586.

\bibitem{BCGP12} I.~Bauer,  F.\ Catanese, F.~Grunewald, and R.~Pignatelli, Quotients of products of curves, new surfaces with $p_g=0$ and their fundamental groups, \textit{Amer. J. Math.} \textbf{134 (4)} (2012), 993--1049.

 \bibitem{Broughton91} S.\,A.~Broughton, Classifying finite group actions on surfaces of low genus, \textit{J. Pure Appl. Algebra} \textbf{69 (3)} (1991), 233--270.
 
 \bibitem{Catanese00} F.\ Catanese, Fibred surfaces, varieties isogenous to a product and related moduli spaces, \textit{Amer.\ J.\ Math.\/} \textbf{122} (2000), 1--44.
 \bibitem{Catanese03} F.\ Catanese, Moduli spaces of surfaces and real structures, \textit{Ann.\ Math.\/} \textbf{158} (2003), 577--592.
 
 \bibitem{Catanese18} F.\ Catanese, On the canonical map of some surfaces isogenous to a product, In: Local and global methods in algebraic geometry, \textit{Contemp.\ Math.} \textbf{712} (2018), 33--57. 
 
\bibitem{DP} T.\ Dedieu and F.\ Perroni, The fundamental group of a quotient of a product of curves, \textit{J.\ Group\ Theory\/} \textbf{15} (2012), 439--453.


\bibitem{GroupNames} T.~Dokchitser, GroupNames, groupnames.org.

\bibitem{dza1} A.\ D\v{z}ambi\'c, Fake quadrics from irreducible lattices acting on the product of upper half planes, \textit{Math.\ Ann.\/} \textbf{360} (2014), 23--51.

\bibitem{dza2} A.\ D\v{z}ambi\'c, Varieties of general type with the same Betti numbers as $\textbf{P}^1\times \cdots \times \textbf{P}^1$, \textit{Geom.\ Topol.\/} \textbf{19} (2015), 2257--2276.

 \bibitem{FrapportiGleissner16} D.~Frapporti and C.~Glei\ss ner,  On threefolds isogenous to a product of curves, \textit{J. Algebra} \textbf{465} (2016), 170--189,

\bibitem{Fujiki} A.~Fujiki, On resolutions of cyclic quotient singularities, \textit{Publ. Res. Inst. Math. Sci.} \textbf{10 (1)} (1974), 293--328.


\bibitem{Gleissneretal22} C.~Glei\ss ner, R.~Pignatelli, and C.~Rito, New surfaces with canonical map of high degree, \textit{Comm. Anal. Geom.} \textbf{30} (2022), 1811--1823.

\bibitem{Pol09} F.~Polizzi, Standard isotrivial fibrations with $p_g=q=1$, \textit{J. Algebra} \textbf{321} (2009), 1600--1631. 

 \bibitem{Shavel} I.~Shavel, A class of algebraic surfaces of general type constructed from quaternion algebras, \textit{Pacific J.\ Math.} \textbf{76} (1978), 221--245.
 \end{thebibliography}
\end{document}